\newtheorem{theorem}{Theorem}[section]
\newtheorem{lemma}[theorem]{Lemma}
\newtheorem{prop}[theorem]{Proposition}
\newtheorem{cor}[theorem]{Corollary}
\theoremstyle{definition}
\newtheorem{definition}[theorem]{Definition}
\newtheorem{example}[theorem]{Example}
\newtheorem{question}[theorem]{Question}
\theoremstyle{remark}
\newtheorem{remark}[theorem]{Remark}
\numberwithin{equation}{section}
\newcommand{\calf}{{\mathcal F}}
\newcommand{\NN}{{\mathbb N}}
\newcommand{\RR}{{\mathbb R}}
\newcommand{\CC}{{\mathbb C}}
\newcommand{\out}[1]{\ }
 \DeclareMathOperator{\psh}{PSH}
 \DeclareMathOperator{\FPSH}{\calf-PSH}
 \DeclareMathOperator{\FMPSH}{\calf-MPSH}
\DeclareMathOperator{\QB}{QB}
\DeclareMathOperator{\NP}{NP}
\let\PSH=\psh
\let\cal=\mathcal
\renewcommand{\phi}{\varphi}
\begin{document}
\title[Maximal $\calf$-Plurisubharmonic functions]{Maximal Plurifinely Plurisubharmonic functions}

\author{Mohamed El Kadiri}
\address{Universit\'e Mohammed V-Agdal
\\Facult\'e des Sciences
\\D\'epartement de Math\'ematiques
\\B.P. 1014, Rabat
\\Morocco}
\email{elkadiri@fsr.ac.ma}

\author{Iris M. Smit}
\address{KdV Institute for Mathematics
\\University of Amsterdam
\\Science Park 904
\\P.O. box 94248, 1090 GE Amsterdam
\\The Netherlands}
\email{i.m.smit@uva.nl}

%\author{Jan Wiegerinck}
%\address{KdV Institute for Mathematics
%\\University of Amsterdam
%\\Science Park 904
%\\P.O. box 94248, 1090 GE Amsterdam
%\\The Netherlands}
%\email{j.j.o.o.wiegerinck@uva.nl}

\subjclass[2000]{
%\footnote{2000 Mathematics Subject Classification
32U05, 32U15, 31C10, 31C40}

\begin{abstract}
The main purpose of this paper is to introduce and study the
notion of $\calf$-maximal ${\cal F}$-plurisubharmonic functions,
which extends the notion of maximal plurisubharmonic functions on
a Euclidean domain to an $\cal F$-domain of $\CC^n$ in a natural
way. Our main result is that a finite $\cal F$-plurisubharmonic
function $u$ on a plurifine domain $U$ satisfies $(dd^cu)^n=0$ if
and only if $u$ is $\calf$-locally $\calf$-maximal outside some
pluripolar set. In particular, a finite $\calf$-maximal
plurisubharmonic function $u$ satisfies $(dd^c u)^n=0$.

\end{abstract}
\maketitle

\section{Introduction}

The plurifine topology $\calf$ on a Euclidean open set
$\Omega\subset\CC^n$ is the smallest topology that makes all
plurisubharmonic functions on $\Omega$ continuous. This
construction is completely analogous to the better known fine
topology in classical potential theory of H.~Cartan. Good
references for the latter are \cite{AG, D}. The topology $\calf$
was introduced in \cite{F6}, and studied e.g.\ by Bedford and
Taylor in \cite{BT}, and by El Marzguioui and Wiegerinck in
\cite{E-W1, EW1}, where they proved in particular that this
topology is locally connected. Notions related to the topology
$\calf$ are provided with the prefix $\calf$, e.g.\ an
$\calf$-domain is an $\calf$-open set that is connected in
$\calf$.

Just as one introduces finely subharmonic functions on fine
domains in $\RR^n$, cf.~Fuglede's book \cite{F1}, one can
introduce plurifinely plurisubharmonic functions on
$\calf$-domains in $\CC^n$. These functions are called ${\cal
F}$-plurisubharmonic. In case $n=1$, we merely recover the finely
subharmonic functions on fine domains in $\RR^2$.

The definition of ${\cal F}$-plurisubharmonic functions on an
${\cal F}$-open set of $\CC^n$ was first given in \cite{EK} and
\cite{EW2}, where some properties of these functions were studied.
The $\calf$-continuity of the ${\cal F}$-plurisubharmonic
functions was established in \cite{EW2}.

In \cite{EFW}, the most important properties of the ${\cal
F}$-plurisubharmonic functions were obtained. This paper included
a convergence theorem, and the characterization of ${\cal
F}$-plurisubharmonic functions as $\calf$-locally bounded finely
subharmonic functions with the property that they remain finely
subharmonic under composition with $\CC$-isomorphisms of $\CC^n$.
Hence the most important properties of plurisubharmonic functions
on Euclidean opens of $\CC^n$ were extended to ${\cal
F}$-plurisubharmonic functions on ${\cal F}$-open sets of $\CC^n$.

In \cite{EW}, the authors obtained a local approximation of ${\cal
F}$-plurisubharmonic functions by plurisubharmonic functions,
outside a pluripolar set. They also defined the Monge-Amp\`{e}re
measure for finite ${\cal F}$-plurisubharmonic functions on an
${\cal F}$-domain $U$. This construction was based on the fact
(established in \cite{EW2}) that such a function can be
$\calf$-locally represented as a difference between two bounded
plurisubharmonic functions defined on some Euclidean open set, and
the quasi-Lindel\"{o}f property of the plurifine topology. The
local approximation property allowed them to prove that this
Monge-Amp\`{e}re measure is a positive Borel measure on $U$ which
is $\calf$-locally finite and doesn't charge the pluripolar sets.
It is $\sigma$-finite by the quasi-Lindel\"{o}f property of the
plurifine topology.

\medskip

In the theory of plurisubharmonic functions on a Euclidean domain,
the so-called maximal functions are the analog of the harmonic
functions in classical potential theory. They play an important
role in the resolution of the Dirichlet problem for the
Monge-Amp\`ere operator. In this paper we introduce and study the
notion of $\calf$-maximal ${\cal F}$-plurisubharmonic functions,
extending the notion of maximal plurisubharmonic functions on a
Euclidean domain to an $\cal F$-domain of $\CC^n$ in a natural
way. In Section 2 we define $\calf$-maximal
$\calf$-plurisubharmonic functions, and look at some basic
properties of these functions. In the next section we look at the
possibility of adapting plurisubharmonic functions to become
$\calf$-maximal at some $\calf$-open subset. Finally, Section 4
connects the $\calf$-maximality of functions to the
Monge-Amp\`{e}re operator. In particular, we prove that a finite
$\cal F$-plurisubharmonic function $u$ on a plurifine domain $U$
satisfies $(dd^cu)^n=0$ if and only if $u$ is $\calf$-locally
$\calf$-maximal outside some pluripolar set.

\section{Maximal ${\cal F}$-plurisubharmonic functions}
\label{sec1}

In analogy with maximal plurisubharmonic functions, which play a
role in pluripotential theory comparable to that of harmonic
functions in classical potential theory, we will introduce
$\calf$-maximal ${\cal F}$-plurisubharmonic functions. These
relate similarly to finely harmonic functions and constitute the
plurifine analog of maximal plurisubharmonic functions on
Euclidean open sets.

For this article, let $n$ be an integer $\geq 1$. If $A \subseteq
\CC^n$, we denote the closure of $A$ in the Euclidean, fine and
plurifine topologies by $\overline{A}$, $\tilde{A}$ and
$\overline{A}^{\calf}$ respectively. For a function $f$ on $A$
with values in ${\overline \RR}$, we denote by $\limsup_{x\in
A,x\to y} f(x)$, $f$-$\limsup_{x \in A, x \to y} f(x)$ and $\cal
F$-$\limsup_{x\in A,x \to y} f(x)$ the $\limsup$ with respect to
the Euclidean topology of $\CC^n$, the fine topology of $\CC^n
\simeq \RR^{2n}$, and the plurifine topology of $\CC^n$
repectively, and likewise for other limits. The set of
$\calf$-plurisubharmonic functions on an $\calf$-open set $U$ will
be denoted by $\FPSH(U)$.

\begin{definition}\label{def2.1}
Let $U\subset \CC^n$ be an $\calf$-open set and let $u\in
\FPSH(U)$. We say that $u$ is ${\cal F}$-maximal if for every
bounded $\calf$-open set $G$ of $\CC^n$ such that $\overline
G\subset U$, and for every function $v\in \FPSH(G)$ that is
bounded from above on $G$ and extends ${\cal F}$-upper
semicontinuously to ${\overline G}^{\cal F}$, the following holds:
\[
v\le u \ \text{on} \ {\partial }_{\cal F}G \Longrightarrow v\le u \ \text{on} \ G.
\]

We denote by  $\FMPSH(U)$ the set of $\calf$-maximal ${\cal F}$-plurisubharmonic
functions on $U$.
\end{definition}

\begin{prop}\label{prop2.2echt}
Let $U \subseteq \CC^n$ be an $\calf$-domain, and let $(u_j)$ be a
sequence of functions in $\FMPSH(U)$, decreasing to $u$. Then
either $u \equiv - \infty$, or $u \in \FMPSH(U)$.
\end{prop}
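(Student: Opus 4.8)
The plan is to verify the defining inequality of Definition~\ref{def2.1} directly for the limit function $u=\lim_j u_j$, assuming $u\not\equiv-\infty$. First I would record that $u\in\FPSH(U)$: a decreasing sequence of $\calf$-plurisubharmonic functions on an $\calf$-domain has limit either $\equiv-\infty$ or in $\FPSH(U)$, by the basic properties of $\calf$-plurisubharmonic functions (they are in particular finely subharmonic, for which the analogous dichotomy holds, and the class is closed under decreasing limits by the results of \cite{EFW}). So the content is the maximality. Fix a bounded $\calf$-open $G$ with $\overline G\subset U$, and a function $v\in\FPSH(G)$, bounded above, extending $\calf$-upper semicontinuously to $\overline G^{\calf}$, with $v\le u$ on $\partial_{\calf}G$. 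Since $u\le u_j$ everywhere, we get $v\le u\le u_j$ on $\partial_{\calf}G$ for each $j$; applying $\calf$-maximality of $u_j$ gives $v\le u_j$ on $G$. Letting $j\to\infty$ yields $v\le u$ on $G$, which is exactly what is needed.

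**The subtlety** — and the step I expect to be the main obstacle — is whether the boundary hypothesis transfers correctly, i.e.\ whether ``$v\le u$ on $\partial_{\calf}G$'' really does give ``$v\le u_j$ on $\partial_{\calf}G$'' in the pointwise sense demanded by Definition~\ref{def2.1}. Since $u_j\ge u$ pointwise on all of $U\supseteq\partial_{\calf}G$, this inequality on the boundary is immediate and no exceptional sets intervene; the only care needed is that $\partial_{\calf}G\subset\overline G^{\calf}\subset U$ so that $u_j$ and $u$ are genuinely defined there, which holds because $\overline G\subset U$ forces $\overline G^{\calf}\subset\overline G\subset U$ (the plurifine closure is contained in the Euclidean closure). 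A second point to check is that $v$ is an admissible competitor for each $u_j$: the hypotheses on $v$ (element of $\FPSH(G)$, bounded above, $\calf$-u.s.c.\ extension to $\overline G^{\calf}$) do not involve $u_j$ at all, so the same $v$ works verbatim in the definition of $\calf$-maximality of every $u_j$. Thus the argument is essentially a monotone passage to the limit with no regularity loss.

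**Finally**, I would note why the dichotomy ``$u\equiv-\infty$ or $u\in\FMPSH(U)$'' rather than a cleaner statement: this is forced by the analogous phenomenon for decreasing limits of finely subharmonic (or plurisubharmonic) functions, where the limit can collapse to $-\infty$; on an $\calf$-domain, connectedness in the plurifine topology guarantees that if $u=-\infty$ somewhere on an $\calf$-nonnegligible set then $u\equiv-\infty$, which is exactly what underlies the stated alternative. Assuming we are in the case $u\not\equiv-\infty$, the preceding two paragraphs complete the proof; no further estimates are required, and in particular one does not need the approximation results of \cite{EW} or any Monge--Amp\`ere machinery here — this proposition is purely about the order-theoretic stability of the maximality condition under decreasing limits.
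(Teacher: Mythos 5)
Your proof is correct and is essentially the same as the paper's: the paper likewise reduces to the dichotomy for decreasing limits (citing \cite[p.~84]{F1} for $u\equiv-\infty$ or $u\in\FPSH(U)$) and then uses $v\le u\le u_j$ on $\partial_{\calf}G$, the $\calf$-maximality of each $u_j$, and passage to the limit. The extra checks you flag (that $\partial_{\calf}G\subset U$ and that $v$ is an admissible competitor for every $u_j$) are fine and just make explicit what the paper leaves implicit.
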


\begin{proof}
In view of \cite[p. 84]{F1}, it is easy to see that $u \equiv -
\infty$ or $u \in \FPSH(U)$. In this second case: let $G$ be a
bounded $\calf$-open set such that $\overline{G} \subset U$, and
let $v \in \FPSH(G)$ be bounded from above on $G$, and extend
$\calf$-upper semicontinuously to $\partial_{\calf}G$. Suppose
that $v \leq u$ on $\partial_{\calf}G$. Then for all $j$ we have
$v \leq u \leq u_j$ on $\partial_{\calf}G$, hence $v \leq u_j$ on
$G$ by $\calf$-maximality of $u_j$. This implies that $v \leq u$
on $G$, so $u$ is $\calf$-maximal.
\end{proof}

The next proposition generalizes a result concerning finely
subharmonic functions due to Fuglede, cf.~\cite[Lemma 10.1]{F1},
to the plurifine situation.

\begin{prop}\label{prop2.2}
Let  $G$ and $U$ be ${\cal F}$-open sets in $\CC^n$ such that $G
\subseteq U$. Suppose that $u\in \FPSH(U)$, $v\in\FPSH(G)$, and
$\calf$-$\limsup\limits_{z\in G, z\to \zeta} v(z)\le u(\zeta)$ for
all $\zeta\in {\partial_{\cal F} G} \cap U$. Then the function
\begin{equation*}
w =
\begin{cases}
\max(u,v)&\text{on $G$,}\\
u& \text{on $\ U\setminus G$,}
\end{cases}
\end{equation*}
is ${\cal F}$-plurisubharmonic on $U$.
\end{prop}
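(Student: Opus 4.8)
The plan is to deduce the proposition from its fine-potential-theoretic prototype (Fuglede's gluing lemma) by means of the characterization obtained in \cite{EFW}: a function on an $\calf$-open set $\Omega\subseteq\CC^n$ is $\calf$-plurisubharmonic exactly when it is $\calf$-locally bounded, finely subharmonic on $\Omega$ (viewing $\CC^n\simeq\RR^{2n}$), and remains finely subharmonic after composition with every $\CC$-isomorphism of $\CC^n$. Accordingly, I would verify three things for $w$: $\calf$-local boundedness, fine subharmonicity, and stability of fine subharmonicity under $\CC$-isomorphisms.

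\emph{Local boundedness.} By construction $w\ge u$ everywhere on $U$, so $w$ inherits a local lower bound from $u$. For an upper bound, on $G$ we have $w=\max(u,v)$ with $u$ and $v$ both $\calf$-locally bounded there, while on $U\setminus\overline G^{\calf}$ we have $w=u$. At a point $\zeta\in\partial_{\calf}G\cap U$ the hypothesis $\calf$-$\limsup_{G\ni z\to\zeta}v(z)\le u(\zeta)<\infty$ produces an $\calf$-neighbourhood $W_1\ni\zeta$ on which $v\le u(\zeta)+1$ throughout $W_1\cap G$; intersecting $W_1$ with an $\calf$-neighbourhood of $\zeta$ on which $u$ is bounded shows that $w$ is bounded above near $\zeta$ as well.

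\emph{Fine subharmonicity.} Since every plurisubharmonic function is finely continuous, Cartan's fine topology on $\RR^{2n}$ refines $\calf$; hence $G$ and $U$ are fine-open, the fine closure of $G$ is contained in $\overline G^{\calf}$, and the $\calf$-$\limsup$ dominates the corresponding limit superior in the fine topology. Therefore $u$ is finely subharmonic on $U$, $v$ is finely subharmonic on $G$, and for each $\zeta$ in the fine boundary of $G$ meeting $U$ --- a subset of $\partial_{\calf}G\cap U$ --- the hypothesis gives the required inequality between the fine $\limsup$ of $v$ along $G\ni z\to\zeta$ and $u(\zeta)$. Fuglede's gluing lemma \cite[Lemma~10.1]{F1} then applies and yields that $w$ is finely subharmonic on $U$.

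\emph{Stability under $\CC$-isomorphisms and conclusion.} A $\CC$-isomorphism $L$ of $\CC^n$ is a plurifine homeomorphism, so $L^{-1}(G)\subseteq L^{-1}(U)$ are $\calf$-open and $w\circ L$ is built from $u\circ L$ on $L^{-1}(U)$ and $v\circ L$ on $L^{-1}(G)$ by exactly the recipe defining $w$; moreover $u\circ L$ and $v\circ L$ are finely subharmonic by the characterization applied to $u$ and to $v$, and the limsup hypothesis transports along $L$. So the previous paragraph applies verbatim and $w\circ L$ is finely subharmonic on $L^{-1}(U)$. Combining the three points with the characterization of \cite{EFW} gives $w\in\FPSH(U)$. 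Most of the effort is the bookkeeping relating the fine and plurifine topologies so that Fuglede's lemma is fed the correct hypotheses; the one place I would be careful is the upper bound at points of $\partial_{\calf}G$, where the finiteness of the $\calf$-$\limsup$ is what saves the argument, but I do not anticipate a genuine obstruction beyond this.
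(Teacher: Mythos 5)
Your proposal follows essentially the same route as the paper: both reduce the statement to Fuglede's gluing lemma \cite[Lemma 10.1]{F1} applied to $w\circ L$ for each $\CC$-isomorphism (in the paper, $\CC$-affine bijection) $L$, using that the fine topology refines $\calf$ so the fine boundary of $L^{-1}(G)$ sits inside $L^{-1}(\partial_{\calf}G)$ and the fine $\limsup$ is dominated by the $\calf$-$\limsup$, and then invoke the characterization of $\calf$-plurisubharmonic functions from \cite[Theorem 3.1]{EFW}. The only difference is that you spell out the $\calf$-local boundedness of $w$, which the paper simply asserts; your argument is correct.
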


\begin{proof}
Let $f:{\CC}^n\longrightarrow {\CC}^n$
be a ${\CC}$-affine bijection. Then
\begin{equation*}
w\circ f =
\begin{cases}
{\max} (u\circ f,v\circ f)&\text{on $ f^{-1}(G)$}, \\
u\circ f                  & \text{on $ f^{-1}(U)\setminus f^{-1}(G)$}.
\end{cases}
\end{equation*}
By \cite[Theorem 3.1]{EFW}, the functions $u\circ f$ and $v\circ
f$ are finely subharmonic on, respectively, the fine open sets
$f^{-1}(U)$ and $f^{-1}(G)$ in ${\CC}^n\cong {\RR}^{2n}$. In view
of the assumptions we have
$$f\textrm{-}\limsup _{z\to \zeta} (v\circ f)(z)\le \calf \textrm{-} \limsup_{z\to \zeta} (v\circ f)(z)\le (u\circ f)(\zeta)$$
for all $\zeta\in {\partial}_f(f^{-1}(G))\subset
\partial_{\calf}(f^{-1}(G))=f^{-1}(\partial_{\calf}G)$, where the
fine limit is lower than or equal to the plurifine limit since the
fine topology is finer. Now Lemma 10.1 in \cite{F1} states that
$w\circ f$ is finely subharmonic on $f^{-1}(U)$. As the function
$w$ is $\calf$-locally bounded, Theorem 3.1 in \cite{EFW} now
shows that $w$ is  ${\cal F}$-plurisubharmonic on $U$.
\end{proof}

\begin{prop}\label{prop2.3}
Let $\Omega \subseteq \CC^n$ be a Euclidean domain. A function
$u\in \PSH(\Omega)$ is ${\cal F}$-maximal if and only if $u$ is
maximal as  a plurisubharmonic function on $\Omega$.
\end{prop}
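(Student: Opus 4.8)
The plan is to prove both implications by comparing the two maximality conditions, exploiting that classical maximality of $u \in \PSH(\Omega)$ can be tested against competitors on \emph{Euclidean} open subsets, while $\calf$-maximality is tested against $\calf$-plurisubharmonic competitors on \emph{$\calf$-open} subsets.

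For the implication ``$\calf$-maximal $\Rightarrow$ maximal'': suppose $u$ is $\calf$-maximal on $\Omega$ viewed as an $\calf$-open set, and let $G' \Subset \Omega$ be a Euclidean open subset and $v \in \PSH(G')$ with $\limsup_{z \to \zeta} v(z) \le u(\zeta)$ for every $\zeta \in \partial G'$. One cannot apply Definition \ref{def2.1} directly with $G = G'$, because $v$ need only be upper semicontinuous, not $\calf$-upper semicontinuous up to the boundary, and $u$ need not dominate $v$ on the (possibly larger) plurifine boundary $\partial_\calf G'$. The remedy is the standard localization trick: fix a point $z_0 \in G'$ and a small Euclidean ball $B$ with $\overline{B} \subset G'$; since $v$ is upper semicontinuous on the compact set $\overline{G'}$ it attains a maximum there, and by sliding the competitor we may instead work on sublevel-type $\calf$-open sets $G = \{ z \in G' : v(z) > u(z) \}$. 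This $G$ is $\calf$-open (both $u$ and $v$ are $\calf$-continuous), $\overline{G} \subseteq \overline{G'} \subset \Omega$, the restriction of $v$ extends $\calf$-upper semicontinuously across $\partial_\calf G$ with boundary values $\le u$ there by construction, and $v$ is bounded above on $G$. Then $\calf$-maximality of $u$ forces $v \le u$ on $G$, which is vacuous unless $G = \emptyset$; hence $v \le u$ on $G'$, so $u$ is maximal.

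For the converse ``maximal $\Rightarrow$ $\calf$-maximal'': suppose $u \in \PSH(\Omega)$ is maximal in the classical sense, let $G$ be a bounded $\calf$-open set with $\overline{G} \subset \Omega$, and let $v \in \FPSH(G)$ be bounded above and $\calf$-upper semicontinuous up to $\partial_\calf G$ with $v \le u$ there. I want to conclude $v \le u$ on $G$. The idea is again to pass to $G_1 := \{ z \in G : v(z) > u(z) \}$, which is $\calf$-open with $\overline{G_1} \subseteq \overline{G} \subset \Omega$; on $\partial_\calf G_1$ one has $\calf\text{-}\limsup (v - u) \le 0$, and $v - u$ is $\calf$-plurisubharmonic (hence finely subharmonic) and $\le 0$ near $\partial_\calf G_1 \cap \Omega$. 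By the plurifine maximum principle for finely subharmonic functions --- available from Fuglede's theory since $v - u$ is finely subharmonic on the fine-open set $G_1$ and has nonpositive fine-$\limsup$ at every fine-boundary point (the fine boundary being contained in the plurifine boundary) --- we get $v - u \le 0$ on $G_1$, i.e. $G_1 = \emptyset$. Note this direction does not even need classical maximality of $u$; it only needs $u \in \PSH(\Omega)$, so the real content of the proposition is the forward direction.

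The main obstacle is the boundary-regularity mismatch in the forward implication: Definition \ref{def2.1} only grants $v \le u$ on $G$ when the competitor $v$ extends \emph{$\calf$-upper semicontinuously} to $\overline{G}^\calf$ with $v \le u$ on $\partial_\calf G$, whereas a classical subharmonic competitor on a Euclidean ball carries only ordinary upper semicontinuity and a Euclidean-boundary inequality. Handling this correctly requires either the sublevel-set reduction sketched above (so that the boundary inequality $v \le u$ holds automatically by the very definition of $G$, and $\calf$-upper semicontinuity of $v-u$ at points of $\partial_\calf G$ follows from $\calf$-continuity of $u$ and upper semicontinuity / fine upper semicontinuity of $v$), or a careful argument that a Euclidean psh function on a ball, being already $\calf$-continuous, satisfies the stronger hypothesis after a harmless shrinking of the domain. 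I expect the sublevel-set approach to be the clean route, and the one place to be careful is checking that $\partial_\calf G_1 \subseteq (\partial_\calf G \cap \Omega) \cup \{ v = u \}$ so that the boundary hypothesis propagates.
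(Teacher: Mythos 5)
Your converse direction (``maximal $\Rightarrow$ $\calf$-maximal'') contains a fatal error: you apply a maximum principle to $v-u$ on $G_1=\{v>u\}$, asserting that $v-u$ is ``$\calf$-plurisubharmonic (hence finely subharmonic)''. It is not: $-u$ is not plurisubharmonic unless $u$ is pluriharmonic, and a difference of (pluri)subharmonic functions is merely delta-psh, so no subharmonic maximum principle is available on $G_1$. Your own remark that this direction ``does not even need classical maximality of $u$'' is the red flag: if it were true, every $u\in\PSH(\Omega)$ would be $\calf$-maximal, hence (by your forward direction) maximal, which is false --- take $u(z)=|z|^2$, $G$ the unit ball and $v\equiv\sup_{\partial G}u$; then $v\le u$ on $\partial_{\calf}G$ but $v>u$ inside, and $v-u$ is superharmonic, not subharmonic, which is exactly where your argument breaks. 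This direction is the substantive half of the proposition, and it genuinely needs the classical maximality of $u$. The paper's route is different: glue the competitor into $u$ via Proposition \ref{prop2.2}, i.e.\ set $w=\max(u,v)$ on $G$ and $w=u$ on $\Omega\setminus G$, so that $w\in\FPSH(\Omega)$, hence $w\in\PSH(\Omega)$ by \cite[Proposition 2.14]{EFW}; then choose a Euclidean open $W$ with $\overline{G}\subset W\subset\overline{W}\subset\Omega$, note $w\le u$ on $\partial W$, and invoke the classical maximality of $u$ on $W$ to get $w\le u$, i.e.\ $v\le u$ on $G$.

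On the forward direction, your starting premise is also mistaken, though harmlessly so: for a Euclidean open $G'$ the plurifine boundary is \emph{not} possibly larger --- since $\overline{G'}^{\calf}\subseteq\overline{G'}$ and $G'$ is $\calf$-open, one has $\partial_{\calf}G'\subseteq\partial G'$; moreover Euclidean upper semicontinuity implies $\calf$-upper semicontinuity because $\calf$ is finer. So a classical competitor $v$, upper semicontinuous on $\overline{G'}$ with $v\le u$ on $\partial G'$, already satisfies the hypotheses of Definition \ref{def2.1} verbatim, and the paper simply applies the definition directly; your sublevel-set detour $G=\{v>u\}$ can be made to work but is unnecessary. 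In summary: the forward half is correct but overcomplicated, and the converse half has a genuine gap that must be repaired along the lines of the gluing argument above.
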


\begin{proof}
Suppose that  $u\in \PSH(\Omega)$ is ${\cal F}$-maximal. Let $G$
be a bounded open set in ${\CC}^n$ such that $\overline G\subset
\Omega$, and let $v\in \PSH(G)$ be upper semicontinuous on
${\overline G}$ with $v\le u$ on ${\partial G}$. Observe that
$u\in\FPSH(\Omega)$ and $v\in\FPSH(G)$, and that $v$ is bounded
from above on $\overline{G}$. Then, because ${\partial_{\calf}
G}\subset {\partial G}$, we have $v\le u$ on ${\partial_{\calf}
G}$, hence $v\le u$ on $G$ by $\calf$-maximality. In other words,
$u$ is maximal.

For the other implication, suppose that $u$ is maximal. Let $G$ be
a bounded $\calf$-open set with the property that $\overline
G\subset \Omega$, and let $v\in \FPSH(G)$ be bounded from above on
$G$, ${\cal F}$-upper semicontinuous on $\overline{G}^{\calf}$,
and satisfy $v\le u$ on ${\partial_\calf G}$. By Proposition
\ref{prop2.2}, the function
\begin{equation*}
w =
\begin{cases}
\max (u,v)&\text{on $G$,}\\
u& \text{on  $\Omega\setminus G$}
\end{cases}
\end{equation*}
lies in $\FPSH(\Omega)$, and by \cite[Proposition 2.14]{EFW}
therefore also in $\PSH(\Omega)$. Obviously, $w \leq u$ on $\Omega
\setminus G$.
Since $\overline{G}$ is compact, $\CC^n \setminus
\Omega$ is closed, and their intersection is empty, these sets
have a positive distance $d=d(\overline{G},\CC^n \setminus
\Omega)$. (In case $\Omega=\CC^n$, just use $d=1$ instead). Define
$W=\{x \in \Omega : d(x, \overline{G}) < \frac{d}{2} \}$. Then $W$
is Euclidean open and $\overline{G} \subseteq W \subseteq
\overline{W} \subseteq \{x \in \Omega : d(x, \overline{G}) \leq
\frac{d}{2} \} \subseteq \Omega$. Now we have $w \leq u$ on
$\partial W$ and hence by maximality of $u$ on $\Omega$, we can
conclude that $w \leq u$ on $W$. This implies that $v \leq u$ on
$G$, so $u$ is $\calf$-maximal.
\end{proof}

\begin{prop}\label{prop2.5}
Let $U \subseteq \CC^n$ be an $\calf$-open set, and $u \in
\FPSH(U)$ a bounded function that is $\calf$-maximal on $U
\setminus F$ for some pluripolar set $F$. Then $u$ is
$\calf$-maximal on $U$.
\end{prop}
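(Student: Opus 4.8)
The plan is to reduce, via a perturbation, to the hypothesis that $u$ is $\calf$-maximal on $U\setminus F$. Fix, as in Definition \ref{def2.1}, a bounded $\calf$-open set $G$ with $\overline G\subset U$ and a function $v\in\FPSH(G)$ that is bounded above, extends $\calf$-upper semicontinuously to $\overline G^{\calf}$, and satisfies $v\le u$ on $\partial_{\calf}G$; set $m=\inf_U u$ and $M=\sup_G v$, both finite. We must show that $v\le u$ on $G$. Since $F$ is pluripolar, there is $\psi\in\PSH(\CC^n)$ with $\psi\not\equiv-\infty$ and $F\subseteq\{\psi=-\infty\}$ (Josefson's theorem), and after subtracting a constant we may assume $\psi\le 0$ on the compact set $\overline G$. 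For $\eps>0$ put $v_\eps:=v+\eps\psi\in\FPSH(G)$; then $v_\eps\le M$, the function $v_\eps$ extends $\calf$-upper semicontinuously to $\overline G^{\calf}$ (add $\eps\psi$ to the extension of $v$), and, like every $\calf$-plurisubharmonic function \cite{EW2}, $v_\eps$ is $\calf$-continuous.

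The heart of the argument is the $\calf$-open bounded set $D_\eps:=\{z\in G:v_\eps(z)>u(z)\}\subseteq G$, for which I claim that its Euclidean closure satisfies $\overline{D_\eps}\subseteq U\setminus F$. Indeed $\overline{D_\eps}\subseteq\overline G\subset U$; and if $\zeta\in F$ then $\psi(\zeta)=-\infty$, so by (Euclidean) upper semicontinuity of $\psi$ there is a Euclidean ball $B\ni\zeta$ on which $\eps\psi<m-M$, so that $v_\eps=v+\eps\psi<M+(m-M)=m\le u$ on $B\cap G$; hence $B\cap D_\eps=\emptyset$ and $\zeta\notin\overline{D_\eps}$. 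I would then apply the $\calf$-maximality of $u$ on $U\setminus F$ to the test set $D_\eps$ with the function $v_\eps|_{D_\eps}$, which is bounded above, extends $\calf$-upper semicontinuously to $\overline{D_\eps}^{\calf}\subseteq\overline G^{\calf}$, and satisfies $v_\eps\le u$ on $\partial_{\calf}D_\eps$: a point $\zeta\in\partial_{\calf}D_\eps$ either lies in $G$, in which case $\zeta\notin D_\eps$ forces $v_\eps(\zeta)\le u(\zeta)$, or, since $D_\eps\subseteq G$, lies in $\partial_{\calf}G$, where the extension satisfies $v_\eps(\zeta)=v(\zeta)+\eps\psi(\zeta)\le u(\zeta)$ because $v(\zeta)\le u(\zeta)$ and $\psi(\zeta)\le 0$. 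Maximality now gives $v_\eps\le u$ on $D_\eps$, so $D_\eps=\{v_\eps>u\}$ is empty; that is, $v+\eps\psi\le u$ on all of $G$, for every $\eps>0$.

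Letting $\eps\downarrow0$ gives $v\le u$ at every point of $G$ outside $\{\psi=-\infty\}$. Finally, $\{z\in G:v(z)>u(z)\}$ is $\calf$-open, since $u$ and $v$ are $\calf$-continuous, and it is contained in the pluripolar set $\{\psi=-\infty\}$, which has empty $\calf$-interior; hence it is empty, so $v\le u$ on $G$, as required. The step I expect to be the crux is exactly the containment $\overline{D_\eps}\cap F=\emptyset$: the unperturbed comparison set $\{v>u\}$ may itself meet $F$, where nothing is assumed about $u$, and what makes the perturbation effective is the elementary but essential fact that a plurisubharmonic function tends to $-\infty$ at each of its poles even in the Euclidean topology, so that a genuine Euclidean neighborhood of $F$ is excluded from $D_\eps$.
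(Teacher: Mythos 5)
Your proposal is correct, but it takes a genuinely different route from the paper's. The paper also perturbs by a global plurisubharmonic function $g$ with poles on $F$, but it keeps the test sets independent of $v$: after normalizing $m<u<0$, it first proves $v\le 0$ on $G$ by gluing $\max(v,0)$ with $0$ and invoking the maximum principle, then exhausts $G$ by the sublevel sets $G_k=\{z\in G: g(z)>km\}$, whose closures avoid $F$, checks $v+\tfrac{g}{k}\le u$ separately on the two boundary pieces coming from $\partial_{\calf}G$ and from $\partial_{\calf}\{g>km\}\cap\overline G^{\calf}$ (the latter is exactly where the preliminary bound $v\le 0$ and the choice of the level $km$ are needed), applies the maximality of $u$ on each $G_k$, and finishes with a double limit in $k$ and $k_0$ before removing the polar set $\{g=-\infty\}$ by $\calf$-continuity. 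You instead test maximality directly on the exceptional set $D_\eps=\{v+\eps\psi>u\}$, and the single observation that $\psi$ tends to $-\infty$ in the Euclidean topology at each point of $F$ (so that $\overline{D_\eps}\cap F=\emptyset$, using $m=\inf_U u>-\infty$ and $M=\sup_G v<\infty$) replaces both the maximum-principle step and the exhaustion; your boundary check on $\partial_{\calf}D_\eps$ splits cleanly into points of $G\setminus D_\eps$, handled by the definition of $D_\eps$, and points of $\partial_{\calf}G$, handled by the hypothesis together with $\psi\le 0$ on $\overline G$. Your version is shorter and uses the boundedness of $u$ essentially only through the finiteness of $\inf_U u$; the paper's version buys test sets that are sublevel sets of $g$ alone (independent of the competitor $v$), at the cost of the normalization, the gluing argument, and the extra limit. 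Both arguments share the same endgame: $\{v>u\}$ is $\calf$-open, contained in the pluripolar set $\{\psi=-\infty\}$ (respectively $\{g=-\infty\}$), hence empty because pluripolar sets have empty $\calf$-interior and $\calf$-plurisubharmonic functions are $\calf$-continuous.
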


\begin{proof}
By subtracting a constant from $u$, we may assume without loss of
generality that $m<u <0$ for some $m \in \mathbb{R}_{-}$. Let $G$
be a bounded $\calf$-open set such that $G \subseteq \overline{G}
\subseteq U$, and let $v$ be an $\calf$-plurisubharmonic function
on $G$ that is bounded from above on $G$, extends $\calf$-upper
semicontinuously to $\overline{G}^{\calf}$, and satisfies $v \leq
u$ on $\partial_{\calf} G$. As $u<0$, we see that $v < 0$ on
$\partial_{\calf}G$. By Proposition \ref{prop2.2}, the function
$w$ defined below is $\calf$-plurisubharmonic on $\CC^n$.
\begin{equation*}
w =
\begin{cases}
\max(v,0)&\text{on $G$,}\\
0 & \text{on $\ \CC^n \setminus G$,}
\end{cases}
\end{equation*}
By \cite[Proposition 2.14]{EFW}, we see that $w \in \PSH(\CC^n)$.
As $G$ is bounded, the maximum principle gives that $w=0$, and
therefore $v \leq 0$ on $G$.

Since $F$ is pluripolar, we can find $g \in \PSH(\CC^n)$ such that
$F \subset \{g= - \infty\}$. Since $\overline{G}$ is bounded, we
can even pick a $g$ that satisfies $g<0$ on $\overline{G}$. For $k
\in \mathbb{N}$, set $G_k=\{ z \in G : g(z) > km \}\subseteq G$.
Since $g$ is upper semicontinuous and $\calf$-continuous, $G_k$ is
$\calf$-open and $\overline{G_k} \subseteq \{z \in \overline{G}:
g(z) \geq km\} \subseteq U \setminus F$. Note also that as $m<0$,
we have $G_k \subseteq G_{k+1}$. As $G_k=G \cap \{z \in \CC^n :
g(z) >km\}$, we find that $\partial_{\calf} G_k \subseteq
\partial_{\calf} G \cup \partial_{\calf} \{z \in \CC^n : g(z) > km
\}$, and $\partial_{\calf}G_k \subseteq \overline{G}^{\calf}$.

The function $v + \frac{g}{k}$ is $\calf$-plurisubharmonic on
$G_k$, bounded from above by 0 on $G_k$, and $\calf$-upper
semicontinuous on $\overline{G_k}^{\calf}$. On $\partial_{\calf}
G$ we have $v + \frac{g}{k}\leq v+0\leq u$. As
$\partial_{\calf}\{z \in \CC^n : g(z)>km\} \subseteq \{z \in \CC^n
: g(z)=km\}$, we find that on $\partial_{\calf}\{z \in \CC^n :
g(z)>km\} \cap \overline{G}^{\calf}$ we have $v +
\frac{g}{k}=v+\frac{km}{k} \leq 0 +m \leq u$. Hence $v +
\frac{g}{k} \leq u$ on $\partial_{\calf} G_k$. Since
$\overline{G_k} \subseteq U \setminus F$, we can apply the
$\calf$-maximality of $u$ on this set to find that $v +
\frac{g}{k} \leq u$ on $G_k$.

As the sets $G_k$ are increasing, the above inequality also
implies that $v + \frac{g}{k} \leq u$ on $G_{k_0}$ for any $k \geq
k_0$. Letting $k$ tend to infinity, we find that $v \leq u $ on
$G_{k_0}$. Letting $k_0$ tend to infinity shows that $v \leq u$ on
$\bigcup_{k=1}^{\infty} G_k = G \setminus \{g= -\infty\}$. Since
the pluripolar set $\{g=-\infty\}$ has an empty $\calf$-interior
and $u$ and $v$ are $\calf$-continuous, this implies that $v\leq
u$ on $G$. This proves the $\calf$-maximality of $u$ on $U$.
\end{proof}

We can also define an $\calf$-local concept of $\calf$-maximality:

\begin{definition}\label{def3.19}
An ${\cal F}$-plurisubharmonic function on an $\calf$-open set $U
\subseteq \CC^n$ is said to be ${\cal F}$-locally $\calf$-maximal
if each point of $U$ has an ${\cal F}$-open neighborhood  $V
\subseteq U$ such that the restriction of  $f$ to $V$ is ${\cal
F}$-maximal.
\end{definition}

This definition can be used to formulate a corollary to
Proposition \ref{prop2.5}:

\begin{cor}
Let $U \subseteq \CC^n$ be an $\calf$-open set, and $u \in
\FPSH(U)$ a finite function that is $\calf$-maximal on $U
\setminus F$ for some pluripolar set $F$. Then there exists an
increasing sequence $(V_j)$ of $\calf$-open sets in $U$ such that
$\bigcup_{j \in \NN} V_j =U$ and $u$ is $\calf$-maximal on each
$V_j$. In particular, $u$ is $\calf$-locally $\calf$-maximal.
\end{cor}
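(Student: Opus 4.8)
The corollary should follow from Proposition~\ref{prop2.5} together with a countable exhaustion of $U$ by $\calf$-open sets whose $\calf$-closures avoid $F$ in a controlled way. Since $F$ is pluripolar, fix $g \in \PSH(\CC^n)$ with $F \subseteq \{g = -\infty\}$; by the quasi-Lindel\"of property of the plurifine topology, $U$ can be written as a countable union of bounded $\calf$-open sets $U^{(m)}$ with $\overline{U^{(m)}}^{\calf} \subseteq U$, up to a pluripolar exceptional set, and on each such bounded piece $g$ is bounded above, so after translating we may assume $g < 0$ there. First I would set, for each $m$ and each $k \in \NN$,
\[
V_j^{(m)} = \{ z \in U^{(m)} : g(z) > -k \},
\]
which (as in the proof of Proposition~\ref{prop2.5}, using that $g$ is upper semicontinuous and $\calf$-continuous) is $\calf$-open with $\overline{V_j^{(m)}}^{\calf} \subseteq \{ z \in \overline{U^{(m)}}^{\calf} : g(z) \geq -k\} \subseteq U \setminus F$. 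Enumerating all pairs $(m,k)$ and taking successive unions produces an increasing sequence $(V_j)$ of $\calf$-open subsets of $U$ whose union is $U \setminus \{g = -\infty\}$, and whose members each have $\calf$-closure inside $U \setminus F$.

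Next I would handle the pluripolar gap $\{g = -\infty\}$. This is the point where one must be a little careful: the union $\bigcup_j V_j$ only exhausts $U$ up to a pluripolar set, but the statement demands $\bigcup_j V_j = U$. Here one exploits that $u$ is \emph{finite} (hence $\calf$-continuous and locally the difference of two bounded plurisubharmonic functions) and that pluripolar sets have empty $\calf$-interior. Concretely, one can first apply Proposition~\ref{prop2.5} to conclude that $u$ is already $\calf$-maximal on \emph{all} of $U$ once we know $\calf$-maximality off $F$—wait, more precisely: since $u$ is finite, for any bounded $\calf$-open $W$ with $\overline W^{\calf} \subseteq U$ the restriction $u|_W$ is bounded, so Proposition~\ref{prop2.5} applies to $u|_W$ on $W$ with exceptional pluripolar set $F \cap W$, giving that $u|_W$ is $\calf$-maximal on $W$. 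Thus I would instead exhaust $U$ directly by bounded $\calf$-open sets $W_j$ with $\overline{W_j}^{\calf} \subseteq U$ and $W_j \subseteq W_{j+1}$ (again available by quasi-Lindel\"of together with local $\calf$-boundedness of $U$), and conclude $\calf$-maximality of $u$ on each $W_j$ from Proposition~\ref{prop2.5}. The final assertion that $u$ is $\calf$-locally $\calf$-maximal is then immediate, since every point of $U$ lies in some $W_j$.

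\textbf{Main obstacle.} The real work is the exhaustion itself: producing an increasing sequence of \emph{bounded} $\calf$-open sets $W_j \subseteq U$ with $\overline{W_j}^{\calf} \subseteq U$ and $\bigcup_j W_j = U$ \emph{exactly}, not merely up to a pluripolar set. The plurifine topology is not metrizable and not Lindel\"of, only quasi-Lindel\"of, so a naive cover yields a countable subcover only outside a pluripolar set. To close this gap one combines two facts: that $U$, being $\calf$-open in $\CC^n$, is $\calf$-locally contained in bounded $\calf$-open sets with compact Euclidean closure (so one can intersect with Euclidean balls to get boundedness and $\overline{W_j}^{\calf} \subseteq U$), and that a finite $\calf$-plurisubharmonic function extends $\calf$-continuously across the pluripolar remainder; since pluripolar sets have empty $\calf$-interior, any pluripolar leftover is automatically absorbed. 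I expect the bookkeeping for the nested union to be routine once the right local statement (bounded $\calf$-neighborhoods with $\calf$-closure in $U$, covering $U$ up to a pluripolar set) is isolated and combined with Proposition~\ref{prop2.5}.
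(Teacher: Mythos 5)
Your proposal has two genuine gaps, and both are exactly the points that the paper's (much shorter) proof is designed to avoid. First, Proposition~\ref{prop2.5} requires the function to be \emph{bounded} on the set where it is applied, and you justify this by asserting that, since $u$ is finite, its restriction to a bounded $\calf$-open $W$ with $\overline{W}^{\calf}\subseteq U$ is bounded. That is false: finiteness of a plurisubharmonic (or $\calf$-plurisubharmonic) function gives no lower bound on bounded sets. For instance $u(z)=\sum_{k}2^{-k}\max\bigl(\log|z-2^{-k}|,-4^{k}\bigr)$ is subharmonic and finite at every point of the unit disc, yet unbounded below on the compact set $\{0\}\cup\{2^{-k}\}$; so your application of Proposition~\ref{prop2.5} on the sets $W_j$ is not licensed. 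Second, the statement demands $\bigcup_j V_j=U$ \emph{exactly}, while the quasi-Lindel\"of property only produces a countable cover of $U$ up to a pluripolar set; your remark that the ``pluripolar leftover is automatically absorbed'' because pluripolar sets have empty $\calf$-interior is not an argument --- the leftover points must actually lie in some $\calf$-open $V_j$, and nothing in your construction puts them there. (A smaller point: the auxiliary function $g$ with $F\subseteq\{g=-\infty\}$ and the sets $\{g>-k\}$ re-run the interior of the proof of Proposition~\ref{prop2.5}; once that proposition is available they are unnecessary, and that half of your write-up is abandoned midway in any case.)

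The paper resolves both difficulties with one choice: take $V_j=\{z\in U:-j<u(z)<j\}$. These sets are $\calf$-open because a finite $\calf$-plurisubharmonic function is $\calf$-continuous; they increase and their union is all of $U$ precisely because $u$ is finite (no quasi-Lindel\"of argument and no exceptional set at all); and $u$ is bounded on each $V_j$ by construction, with $u$ $\calf$-maximal on $V_j\setminus F$ since every admissible test pair $(G,v)$ for $V_j\setminus F$ in the sense of Definition~\ref{def2.1} is also admissible for $U\setminus F$. Proposition~\ref{prop2.5}, applied on $V_j$ with exceptional set $F\cap V_j$, then gives $\calf$-maximality of $u$ on each $V_j$, and the $\calf$-local statement follows because the $V_j$ cover $U$. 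If you want to salvage your argument, this is the device you are missing: use sublevel--superlevel sets of $u$ itself rather than a geometric exhaustion of $U$, since boundedness of $U$ or of $W_j$ is irrelevant to Definition~\ref{def2.1} (only the test sets $G$ need to be bounded), whereas boundedness of $u$ is essential for Proposition~\ref{prop2.5}.
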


\begin{proof}
For each $j \in \NN$, set $V_j=\{z\in U: -j<u(z)<j \}$. By
$\calf$-continuity of $u$, the sets $V_j$ are $\calf$-open. Since
$u$ is finite, $\bigcup_{j \in \NN} V_j = U$. Since $u$ is
$\calf$-maximal on $U \setminus F$, it will be bounded and
$\calf$-maximal on $V_j \setminus F$. By Proposition
\ref{prop2.5}, $u$ has to be $\calf$-maximal on all of $V_j$. As
the $V_j$ cover $U$, $u$ will be $\calf$-locally $\calf$-maximal
on $U$.
\end{proof}

Let $f$ be a non-positive ${\cal F}$-plurisubharmonic  function in
an ${\cal F}$-open subset $U$ of $\CC^n$ and $A\subset U$. We put
$$f_A(z)=\sup\{u(z): \ u\in \FPSH_-(U) \textrm{ and } u\le f \ \ {\rm in} \ A\},$$
where $\FPSH_{-}(U)$ denotes the  cone of non-positive ${\cal
F}$-plurisubharmonic functions  on $U$. In \cite[Theorem 3.9]{EFW}
it was shown that the \emph{$\calf$-upper semicontinuous
regularization} $f_A^*$ of $f_A$ is an ${\cal F}$-plurisubharmonic
function in $U$. It will be called the \emph{maximalized function
of $f$ in $A$}.

If $U_1\subset U_2\subset U$ are two ${\cal F}$-open subsets of
$U$ and $A\subset U_1$, we denote by ${^{U_1}f}_A^*$ and
${^{U_2}f}_A^*$ the maximalized functions on $A$ of the
restrictions of $f$ to $U_1$ and $U_2$, respectively. Then we have
${^{U_2}f}_A^* \le {^{U_1}f}_A^*$ on $U_1$.

\medskip

This notion is somewhat similar to that of reduced and swept-out
functions in the fine setting: Let $f \geq 0$ be a function on an
$f$-open set $U \subseteq \RR^n$, and $A \subseteq U$ a subset,
then
$$R^A_f(x) = \inf \{u(x) : u \textrm{ is finely hyperharmonic on }U \textrm{, } u \geq 0\textrm{, and } u\geq f \textrm{ on } A\}$$
and $\widehat{R}^A_f$ is its $f$-lower semicontinuous regularization. See for example \cite{F1}.

\begin{example}[An ${\cal F}$-maximal plurisubharmonic function]\label{example2.4}

Let $U$ be a bounded $\calf$-open set in $\CC^n$, let $u\in
\PSH_-(U)$, and let $V \subset U$ be ${\cal F}$-open. Then $u_{U
\setminus V}^*$ as defined above is an element of $\FMPSH(V)$, as
we will prove in Proposition \ref{prop3.6}.
\end{example}

In the one-dimensional case, $\calf$-maximality and $\calf$-local
$\calf$-maximality are identical for finite functions:

\begin{prop}\label{prop2.9}
Let $U \subseteq \CC$ be an $f$-open set, and let $f$ be a finite
finely subharmonic function on $U$. Then $f$ is $\calf$-maximal if
and only if $f$ is $\calf$-locally $\calf$-maximal, if and only if
$f$ is finely harmonic.
\end{prop}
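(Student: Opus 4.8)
The plan is to prove the chain of equivalences by showing that $\calf$-harmonicity is equivalent to $\calf$-local $\calf$-maximality, and that for finite functions $\calf$-local $\calf$-maximality upgrades to global $\calf$-maximality. The case $n=1$ is special because finely subharmonic functions on fine domains in $\RR^2$ coincide with $\calf$-plurisubharmonic functions, so the whole theory of finely harmonic and finely hyperharmonic functions from Fuglede's book \cite{F1} is available, and in particular finely harmonic functions are exactly those finely subharmonic functions $f$ for which $-f$ is also finely subharmonic.

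First I would establish that $f$ finely harmonic $\Leftrightarrow$ $f$ is $\calf$-locally $\calf$-maximal. For the forward direction, suppose $f$ is finely harmonic on $U$. Given a bounded $\calf$-open $G$ with $\overline G \subseteq U$ and a competitor $v \in \FPSH(G)$ bounded above, $\calf$-upper semicontinuous on $\overline G^{\calf}$, with $v \le f$ on $\partial_\calf G$, consider $v - f$ on $G$. Since $-f$ is also finely subharmonic (by fine harmonicity), $v - f$ is finely subharmonic on $G$, bounded above, with $\calf\text{-}\limsup_{z\to\zeta}(v-f)(z) \le 0$ for $\zeta \in \partial_\calf G$; the fine boundary maximum principle for finely subharmonic functions on the bounded fine-open set $G$ (\cite[Theorem 10.8 or 10.9]{F1}, using that $\partial_\calf G = \partial_f G$ in $\RR^2$ and that $\overline G^{\calf}$ is compact, so $G$ is not "thin at infinity") then gives $v - f \le 0$ on $G$. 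Hence $f$ is $\calf$-maximal on all of $U$, a fortiori $\calf$-locally $\calf$-maximal. For the converse, suppose $f$ is $\calf$-locally $\calf$-maximal; fix a point and an $\calf$-open neighborhood $V$ on which $f$ is $\calf$-maximal, and shrink $V$ to a bounded $\calf$-open set with $\overline{V}^{\calf}$ compact. One shows $f$ is finely harmonic on a slightly smaller set by testing against the fine harmonic extension: using that $f$ is finite and finely continuous, restrict to a regular bounded fine-open set $W \Subset V$ and let $h$ be the solution of the fine Dirichlet problem on $W$ with boundary data $f|_{\partial_f W}$ (which exists and is finely harmonic, with $f \le h$ since $f$ is finely subharmonic, by \cite[Ch. 10–11]{F1}). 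Then $h$ is a competitor in the definition of $\calf$-maximality on $V$ (it is finely harmonic hence $\calf$-plurisubharmonic, bounded on $W$ because $f$ is finite and finely u.s.c. on $\overline{W}^f$, and $h \le f$ on $\partial_f W$ since boundary values agree), so $\calf$-maximality yields $h \le f$ on $W$, whence $f = h$ is finely harmonic on $W$. As such $W$ can be chosen around every point, $f$ is finely harmonic on $U$.

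Next, the implication $\calf$-local $\calf$-maximality $\Rightarrow$ $\calf$-maximality. Having the previous step, $f$ is finely harmonic on $U$, and the forward direction argument already showed that a globally finely harmonic finite function is globally $\calf$-maximal; alternatively this follows directly from the fine boundary maximum principle as above without re-localizing. The reverse implication $\calf$-maximality $\Rightarrow$ $\calf$-local $\calf$-maximality is trivial from the definitions. This closes the cycle.

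The main obstacle I anticipate is the converse direction $\calf$-local $\calf$-maximality $\Rightarrow$ finely harmonic: one must produce a legitimate test function $v$ that is a genuine finely harmonic "filling" of $f$ on a small regular fine-open set, and verify carefully that it meets all the hypotheses in Definition \ref{def2.1} — boundedness above, $\calf$-upper semicontinuity up to the $\calf$-boundary, and the boundary inequality $v \le f$ on $\partial_\calf G$. The delicate point is matching the fine topology on $\CC \simeq \RR^2$ with the plurifine topology (for $n=1$ they coincide, so $\partial_\calf G = \partial_f G$, which is what makes everything work) and ensuring the fine Dirichlet solution $h$ extends finely upper semicontinuously to $\overline W^f$; this should follow from regularity of $W$ and finiteness of $f$, invoking \cite[§§10–11]{F1}, but it requires care about the behavior of $h$ at irregular or infinite boundary points, handled by first passing to a relatively compact (in the Euclidean sense) regular fine-open subset.
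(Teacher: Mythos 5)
Your proposal is correct and follows essentially the same route as the paper: the forward direction is the maximum principle applied to $v-f$ (the paper implements it by gluing $\max(v-f,0)$ via Proposition \ref{prop2.2} and the classical maximum principle, where you invoke Fuglede's fine boundary maximum principle directly), and the converse uses exactly the paper's comparison function, namely the fine Dirichlet solution $z\mapsto\int f\,d\varepsilon_z^{\mathbf{C}V}$ on small bounded regular fine-open sets with Euclidean closure in $U$, with the same boundary-limit verification via regularity and domination by a potential as in \cite{F1}. The only cosmetic difference is that you conclude $f=h$ directly on each such set instead of first deducing fine superharmonicity from the base of sets, which amounts to the same argument.
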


\begin{proof}
Note that the fine and plurifine topologies are equal in this
setting. Suppose first that $f$ is finely harmonic on $U$. Let $G$
be a bounded $f$-open set such that $\overline {G} \subseteq U$.
Let $v$ be a finely subharmonic function on $G$ that is bounded
from above on $G$, extends $f$-upper semicontinuously to
$\tilde{G}$, and satisfies $v \leq f$ on $\partial_{f} G$. Then
$v-f$ is finely subharmonic on $G$, and by Proposition
\ref{prop2.2}, the function $w$ defined below is finely
subharmonic on $\CC$:
\begin{equation*}
w =
\begin{cases}
\max(v-f,0)&\text{on $G$,}\\
0& \text{on $\ \CC \setminus G$,}
\end{cases}
\end{equation*}
By \cite[Proposition 2.14]{EFW}, we see that $w$ is subharmonic on
$\CC$. As $G$ is bounded, we can use the maximum principle to see
that $w = 0$, and therefore $v \leq f$ on $G$. This proves the
$\calf$-maximality of $f$.

Now assume that $f$ is $\calf$-maximal on $U$. Note that $f$ is
$f$-continuous. We can find a base for the fine topology on $U$,
consisting of bounded, regular sets $V$ such that $V \subseteq
\overline{V} \subseteq U$, and $f$ is bounded on $\overline{V}$.

Let $V$ be a set as described above, and define $v(z)= \int f d
\varepsilon_z^{\mathbf{C}V}$ for $z \in V$. Here,
$d\varepsilon_z^{\mathbf{C}V}$ is the swept-out of the Dirac
measure $\varepsilon_z$ on $\mathbf{C}V$. See \cite{F1} for more
information. Since $f$ is bounded on $\overline{V}$, we can apply
\cite[Theorem 14.6]{F1} to see that $v$ is finely harmonic on $V$.
Since $V$ is relatively compact in $U$, we can find a bounded
domain $D$ such that $\overline{G} \subseteq D $. There exists a
strictly positive potential $p$ on $D$. Since $p$ is lower
semicontinuous, $p$ assumes a positive minimum on $\overline{V}$.
Multiplying $p$ by an appropriate constant will give $|f| \leq p$
on $\overline{V}$, since $f$ is bounded on $\overline{V}$. We can
now apply \cite[Theorem on page 177]{F1} to see that $f$-$\lim_{y
\rightarrow z} v(y) = f(z)$ for all $z \in \partial_{f}V$, since
$f$ is finely continuous and $V$ is regular. The
$\calf$-maximality of $f$ now guarantees that $v \leq f$ on $V$.

Since our sets $V$ form a base for the fine topology on $U$, and
$f$ is finite and finely continuous, the function $f$ must be
finely superharmonic. Since $f$ is also finely subharmonic, it
must be finely harmonic.

Finally, since being finely harmonic is an $f$-local property, we
can conclude that being $\calf$-maximal must also be an $f$-local
property. As $f$-local and $\calf$-local coincide in this setting,
$f$ is $\calf$-maximal on $U$ if and only if $f$ is
$\calf$-locally $\calf$-maximal on $U$.
\end{proof}

Let $U$ be a fine domain in $\RR^n$. A function $f \colon U
\rightarrow \RR_{+}\cup \{+ \infty\}$ is said to be
\emph{invariant} if there exists a sequence $(V_j)$ of finely open
sets such that $\tilde{V_j} \subseteq r(U)$ for all $j$, $\bigcup_{j
\in \NN} V_j =U$ and $\widehat{R}^{U \setminus V_j}_f=f$ for all
$j$. Here, $r(U)$ denotes the least regular finely open set containing $U$. See \cite{F5,EF} for more details. The invariant functions form
the fine analog of the non-negative harmonic functions in the
Euclidean case. A non-negative finely harmonic function on $U$ is
invariant, but the converse does not hold in general when $n>2$. However, an
invariant function $h$ on $U$ will be finely harmonic on the fine
open set $\{h < + \infty\}$ by \cite[Theorem 10.2]{F1}.

Let $G_U$ be the fine Green kernel of $U$ (see \cite{F4,F5}). Then
any non-negative finely superharmonic function $u$ can be uniquely
written as $u=G_U^{\mu} +h$, where $\mu \geq 0$ is a Borel measure
on $U$ and $h$ is an invariant function on $U$. So if
$u=G_U^{\nu_1} +h_1=G_U^{\nu_2}+h_2$ for Borel measures $\nu_1
\geq 0$ and $\nu_2 \geq 0$ on $U$ and invariant functions $h_1$
and $h_2$, then $\nu_1=\nu_2$ and $h_1=h_2$.

\begin{prop}\label{invariant geeft locmax}
Let $U \subseteq \CC$ be an $f$-open set, and $f \leq 0$ a finely
subharmonic function on $U$ such that $-f$ is invariant. Then $f$
is $\calf$-locally $\calf$-maximal on $U$.
\end{prop}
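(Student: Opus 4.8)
The plan is to read off what invariance gives us and then localise. Since $-f$ is invariant, there is a sequence $(V_j)$ of finely open sets with $\bigcup_{j\in\NN}V_j=U$ and $\widehat R^{\,U\setminus V_j}_{-f}=-f$ on $U$ for every $j$. In $\CC\cong\RR^2$ the fine and plurifine topologies coincide, so each $V_j$ is an $\calf$-open subset of $U$; as they cover $U$, it suffices to prove that $f$ is $\calf$-maximal on each $V_j$, and then $\calf$-local $\calf$-maximality of $f$ on $U$ follows at once from Definition \ref{def3.19}.

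Fix $j$. An invariant function is in particular a non-negative finely superharmonic function on $U$, so $-f$ is one. The set $U\setminus V_j$ is finely closed in $U$, hence its base is contained in $U\setminus V_j$ and is disjoint from $V_j$; by the standard properties of réduites of non-negative finely superharmonic functions (see \cite{F1}), $\widehat R^{\,U\setminus V_j}_{-f}$ is therefore finely harmonic, and in particular real-valued, on the complementary finely open set $V_j$. (It cannot be identically $+\infty$ on a fine component of $V_j$, since the set where a finely superharmonic function equals $+\infty$ is polar and so has empty fine interior.) Combined with the invariance identity $\widehat R^{\,U\setminus V_j}_{-f}=-f$, this shows that $-f$, and hence $f$, is finely harmonic and finite on $V_j$. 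Since $f|_{V_j}$ is then a finite finely subharmonic function on the finely open set $V_j$ which is finely harmonic there, Proposition \ref{prop2.9} gives that it is $\calf$-maximal on $V_j$. As $j$ was arbitrary and the $V_j$ cover $U$, the function $f$ is $\calf$-locally $\calf$-maximal on $U$.

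The only step that is not bookkeeping is the middle one: that the réduite of the non-negative finely superharmonic function $-f$ over the finely closed set $U\setminus V_j$ is finely harmonic, hence real-valued, on $V_j$. This is precisely the fine-potential-theoretic fact that such a réduite is finely harmonic off the base of the set over which one reduces, together with the observation that the base of $U\setminus V_j$ misses $V_j$; with that in hand, everything else reduces to a direct appeal to Proposition \ref{prop2.9}.
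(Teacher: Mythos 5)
There is a genuine gap, and it sits exactly at the step you single out as ``the only step that is not bookkeeping''. You claim that for an arbitrary non-negative finely superharmonic function $-f$ the r\'eduite $\widehat R^{\,U\setminus V_j}_{-f}$ is finely harmonic, hence real-valued, on $V_j$. That is not a standard property of fine r\'eduites: \cite[Theorem 10.2]{F1} yields fine harmonicity of the swept-out function off the swept set only under a finiteness hypothesis (finely harmonic functions are by definition finite), and in the fine setting there is no Poisson-modification argument to wipe out infinities as in the classical case. Indeed, if your claim were true in general it would show that every invariant function is finely harmonic (apply it to each $V_j$ in the definition of invariance and use that fine harmonicity is a finely local property), which contradicts the very reason the notion of invariance exists: as recalled before Proposition \ref{invariant geeft locmax}, an invariant function is only finely harmonic on $\{h<+\infty\}$, and invariant functions that are not finely harmonic do occur (for $n>2$). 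Concretely, $-f$ may take the value $+\infty$ at points of $V_j$, the invariance identity then forces $\widehat R^{\,U\setminus V_j}_{-f}=-f$ to be infinite there, and your subsequent appeal to Proposition \ref{prop2.9} is blocked, since that proposition requires the function to be \emph{finite}. (Your parenthetical remark only excludes $-f\equiv+\infty$ on a fine component; it does not exclude the pointwise infinities, which are the real obstruction. Moreover, polarity of the infinity set of a finely superharmonic function is not in \cite{F1}; only emptiness of its fine interior is available there.)

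The paper's proof is designed precisely to avoid assuming finiteness of $f$: it first reduces to $U$ bounded and regular (using \cite[Theorem 3.22]{EF}, so that restrictions remain invariant and the $r(U)$ in the definition of invariance causes no trouble --- a point you also pass over), then truncates, writing $\widehat R^{\,U\setminus V_j}_{-f}=\lim_{k\to\infty}\widehat R^{\,U\setminus V_j}_{\min(-f,k)}$ by \cite[Theorem 11.12]{F1}. Each $\min(-f,k)$ is finite and finely superharmonic, so \cite[Theorem 10.2]{F1} does apply and gives that $\widehat R^{\,U\setminus V_j}_{\min(-f,k)}$ is finely harmonic on $V_j$; Proposition \ref{prop2.9} then makes $\pm\widehat R^{\,U\setminus V_j}_{\min(-f,k)}$ $\calf$-maximal on $V_j$, and $f=-\widehat R^{\,U\setminus V_j}_{-f}$ is the decreasing limit of these $\calf$-maximal functions, hence $\calf$-maximal on $V_j$ by Proposition \ref{prop2.2echt}. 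If you want to salvage your more direct route, you would have to prove (not quote) that in $\RR^2$ the sweep of a possibly infinite non-negative finely superharmonic function is finite on $V_j$; as written, the step fails.
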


\begin{proof}
Working in $\CC$, the fine and plurifine topologies are equal.
Since we are looking for an $f$-local result, we can cover $U$ by
bounded regular $f$-open subsets $U_i$ and prove $\calf$-local
$\calf$-maximality on each $U_i$. By \cite[Theorem 3.22]{EF}, the
function $-f|_{U_i}$ will still be invariant. Therefore, we can
assume w.l.o.g. that $U$ is bounded and regular.

Since $-f$ is invariant, we can find an increasing sequence
$(V_j)_{j \in \mathbb{N}}$ of $f$-open sets such that
$\bigcup_{j=1}^{\infty} V_j = U$, $\tilde{V_j} \subseteq U$ for
all $j \in \mathbb{N}$, and $\widehat{R}^{U \setminus
V_j}_{-f}=-f$. But using \cite[Theorem 11.12]{F1}, we can write
$\widehat{R}^{U \setminus V_j}_{-f}= \lim_{k \rightarrow \infty}
\widehat{R}^{U \setminus V_j}_{\min(-f,k)}$. As $\min(-f,k)$ is
finite and finely superharmonic, the function $\widehat{R}^{U
\setminus V_j}_{\min(-f,k)}$ is finely harmonic on $V_j$ by
\cite[Theorem 10.2]{F1}. As a consequence, the functions
$\widehat{R}^{U \setminus V_j}_{\min(-f,k)}$ and $-\widehat{R}^{U
\setminus V_j}_{\min(-f,k)}$ will both be $\calf$-maximal on $V_j$
by Proposition \ref{prop2.9} above. The function $f=-
\widehat{R}^{U \setminus V_j}_{-f}$ is therefore the limit of a
decreasing sequence of $\calf$-maximal functions on $V_j$, hence
$\calf$-maximal on $V_j$ by Proposition \ref{prop2.2echt}. Since
the $V_j$ cover $U$, $f$ will be $\calf$-locally $\calf$-maximal
on $U$.
\end{proof}

\section{Maximalized $\calf$-plurisubharmonic functions}

As mentioned in Example \ref{example2.4}, we can find examples of
$\calf$-maximal functions by studying the maximalized function
$$f_A^*(z)=(\sup\{u(z): \ u\in \FPSH_-(U) \textrm{ and } u\le f \
\ {\rm in} \ A\})^*$$ of a function $f \in \FPSH_{-}(U)$, where
$U$ is an $\calf$-open subset of $\CC^n$ and $A \subseteq U$.

\begin{prop}\label{prop3.6}
Let $U$ be a bounded $\calf$-open subset of $\CC^n$, $f\in
\FPSH_-(U)$ and $V$ an ${\cal F}$-open subset of $U$, then
$f_{U\setminus V}^*$ is ${\cal F}$-maximal on $V$.
\end{prop}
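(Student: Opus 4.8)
The plan is to show that $g := f_{U\setminus V}^*$ is $\calf$-maximal on $V$ directly from Definition \ref{def2.1}. First I would record the basic properties of $g$ that are already available: by \cite[Theorem 3.9]{EFW} (quoted above), $g \in \FPSH(U)$, and since every competitor $u \in \FPSH_-(U)$ with $u \le f$ on $U\setminus V$ is $\le 0$, we have $g \le 0$ on $U$; moreover $g = g^* \le f$ on $U\setminus V$ outside a pluripolar set, and in fact (by the usual regularization argument for such envelopes) $g \le f$ everywhere on $U\setminus V$ that is $\calf$-relevant, while on $V$ the function $g$ is the envelope of the family. The key structural fact I want is a \emph{balayage/gluing} property: if $G \subseteq \overline G \subseteq V$ is bounded $\calf$-open and $v \in \FPSH(G)$ is bounded above, extends $\calf$-u.s.c.\ to $\overline G^{\calf}$, and satisfies $v \le g$ on $\partial_{\calf}G$, I must produce $v \le g$ on $G$.

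The main step is to glue $v$ into $g$ and check the result is still an admissible competitor for the envelope defining $g$. Concretely, set
\begin{equation*}
w =
\begin{cases}
\max(g,v) & \text{on } G,\\
g & \text{on } U\setminus G.
\end{cases}
\end{equation*}
Since $\calf\text{-}\limsup_{z\in G,\,z\to\zeta} v(z) \le g(\zeta)$ for $\zeta \in \partial_{\calf}G \cap U$ (this is the hypothesis on $v$, using $\calf$-upper semicontinuity of the extension), Proposition \ref{prop2.2} applies and gives $w \in \FPSH(U)$. Next, $w \le 0$ on $U$: on $U\setminus G$ we have $w = g \le 0$, and on $G$ we have $w = \max(g,v)$; here I need $v \le 0$ on $G$, which follows exactly as in the proof of Proposition \ref{prop2.5} — glue $\max(v,0)$ with $0$ outside $G$, invoke \cite[Proposition 2.14]{EFW} to get a global plurisubharmonic function on $\CC^n$, and apply the maximum principle using boundedness of $G$. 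Finally, $w \le f$ on $U\setminus V$: this holds because $G \subseteq V$, so $U\setminus V \subseteq U\setminus G$, where $w = g \le f$ on $U\setminus V$ (modulo the pluripolar exceptional set, which is handled by the regularization and $\calf$-continuity, exactly as in Proposition \ref{prop2.5}'s final paragraph). Hence $w$ is an admissible competitor in the supremum defining $g$, so $w \le g$ on $U$, which forces $v \le g$ on $G$.

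The place I expect to need the most care is the claim that $w$ is genuinely $\le f$ on $U\setminus V$, i.e.\ that gluing $v$ in over $G$ has not destroyed the defining inequality at the level of the \emph{regularized} envelope. The subtlety is that $g = g^*$ may differ from the unregularized $g_{U\setminus V}$ on a pluripolar set, so one must argue that the competitors can be taken to satisfy $u \le f$ on $U\setminus V$ pointwise and that $g^* \le f$ there still holds $\calf$-quasi-everywhere; then the pluripolar discrepancy is swept away using that a pluripolar set has empty $\calf$-interior together with the $\calf$-continuity of $w$, $g$, and $f$ — the same mechanism used at the end of the proof of Proposition \ref{prop2.5}. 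A secondary routine point is checking that $v \le 0$ on $G$ really follows without circularity; but this only uses the maximum principle on Euclidean space via \cite[Proposition 2.14]{EFW}, not any maximality of $g$, so there is no issue. Once these two checks are in place, the conclusion $v \le g$ on $G$ is immediate, and since $G$ and $v$ were arbitrary, $g$ is $\calf$-maximal on $V$.
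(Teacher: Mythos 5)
Your construction of $w$ and the plan to feed it back into the envelope defining $g=f^*_{U\setminus V}$ is exactly the paper's strategy, and your maximum-principle argument for $v\le 0$ on $G$ (gluing $\max(v,0)$ with $0$ and using \cite[Proposition 2.14]{EFW}) is a correct supplement. The gap is precisely at the step you flag as delicate: you cannot obtain $w\le f$ on $U\setminus V$ by ``sweeping away'' the pluripolar discrepancy with $\calf$-continuity. That mechanism (pluripolar sets have empty $\calf$-interior, plus $\calf$-continuity) upgrades an inequality known quasi-everywhere on an \emph{$\calf$-open} set to the whole set; here the inequality is needed on $U\setminus V$, which is only relatively $\calf$-closed, and in fact $f^*_{U\setminus V}\le f$ can genuinely fail on a pluripolar part of $U\setminus V$. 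For instance, take $n=1$, $U=\DD$, $V=\DD\setminus\{0\}$, $f=\max(\log|z|,-10)$: the competitors $\max(\epsilon\log|z|,-10)$ show that $f_{U\setminus V}=0$ off the origin, hence $f^*_{U\setminus V}\equiv 0$, while $f(0)=-10$; so on $U\setminus V=\{0\}$ the regularized envelope strictly exceeds $f$. Consequently $w$ (which equals $f^*_{U\setminus V}$ on $U\setminus V$) need not be an admissible competitor, and no continuity argument will make it one.

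The missing idea --- and the reason the hypothesis that $U$ is bounded is in the statement --- is a perturbation, which is how the paper proceeds. The set $A=\{f^*_{U\setminus V}>f_{U\setminus V}\}$ is pluripolar by \cite[Theorem 3.9]{EFW}, and since $U$ is bounded there exists $\psi\in\FPSH_-(U)$ with $A\subset\{\psi=-\infty\}$. For every $\alpha>0$ the function $w+\alpha\psi$ is then a genuine competitor: it is $\le 0$ (here your $v\le 0$ step is used), and it is $\le f$ on all of $U\setminus V$ because off $A$ one has $w=f^*_{U\setminus V}=f_{U\setminus V}=f$ there, while on $A$ it equals $-\infty$. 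Hence $w+\alpha\psi\le f^*_{U\setminus V}$ on $U$; letting $\alpha\to 0$ gives $w\le f^*_{U\setminus V}$ on $U\setminus\{\psi=-\infty\}$, and only at this point does the empty-$\calf$-interior/$\calf$-continuity argument apply --- now on the $\calf$-open set $U$ --- to yield $w\le f^*_{U\setminus V}$ everywhere, hence $v\le f^*_{U\setminus V}$ on $G$. With this perturbation step inserted in place of your ``sweeping'' argument, your proof becomes the paper's proof.
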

\begin{proof}
Let $G$ be an $\calf$-open set such that ${\overline G}\subset V$
and let $v\in \FPSH(G)$ be bounded from above on $G$,  ${\cal
F}$-upper semicontinuous on $\overline{G}^{\calf}$, and satisfy
$v\le f_{U\setminus V}^*$ on ${\partial_\calf G}$.
 Let us put
\begin{equation*}
w =
\begin{cases}
{\max} (f_{U\setminus V}^*,v)&\text{on $G$,}\\
f_{U\setminus V}^*& \text{on $U\setminus G$}.
\end{cases}
\end{equation*}
By Proposition \ref{prop2.2} $w\in\FPSH(U)$.

We have $w=f_{U\setminus V}^*$ on $U\setminus V$ because
$U\setminus V\subset U\setminus G$. Since $f$ is
$\calf$-plurisubharmonic on $U$, one has $f_{U\setminus V}=f$ on
$U\setminus V$. On the other hand, it follows from \cite[Theorem
3.9]{EFW} that $f_{U\setminus V}^*=f_{U\setminus V}$ outside of a
pluripolar set $A\subset U$. As $U$ is bounded, we can find an
$\calf$-plurisubharmonic function $\psi<0$ on $U$ such that
$A\subset \{\psi=-\infty\}$. Then for any $\alpha>0$, we have
$w+\alpha \psi \le f$ on $U\setminus V$, hence $w+\alpha \psi \le
f_{U\setminus V}^*$ on $U$.  By letting $\alpha$ tend to $0$, we
obtain $w\le f_{U\setminus V}^*$ on $U\setminus \{\psi=-\infty\}$.
As we know that $w\ge f_{U\setminus V}^*$, the functions $w$ and
$f_{U\setminus V}^*$ can only differ on a pluripolar set. However,
as both are $\calf$-continuous, and pluripolar sets have empty
$\calf$-interiors, these functions must be identical on $U$. It
follows then that $v\le  f_{U\setminus V}^*$. Hence the
restriction of $f_{U\setminus V}^*$ to $V$ is ${\cal F}$-maximal.
\end{proof}

In the above proposition we have supposed that $U$ is bounded,
which guarantees for any pluripolar subset $A$ of $U$ the
existence of a function $\psi\in \FPSH_-(U)$ such that $A\subset
\{\psi=-\infty\}$. For a general $U$ we have:

\begin{prop}\label{prop3.7}
Let $U$ be an $\calf$-open subset of $\CC^n$, $f\in \FPSH_{-}(U)$
and $C\subseteq U$ be an ${\cal F}$-closed subset relative to $U$.
Then $f_{U\setminus C}^*$ is ${\cal F}$-maximal on the
$\calf$-interior $V$ of $C$.
\end{prop}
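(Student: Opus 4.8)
\textbf{Proof proposal for Proposition \ref{prop3.7}.}
The plan is to reduce the general case to the bounded case handled in Proposition \ref{prop3.6} by an $\calf$-local argument, exploiting that $\calf$-maximality of the restriction to $V$ only needs to be checked against bounded $\calf$-open test sets $G$ with $\overline G^{\calf}\subset V$. First I would fix such a $G$ together with a competitor $v\in\FPSH(G)$ that is bounded above, extends $\calf$-upper semicontinuously to $\overline G^{\calf}$, and satisfies $v\le f_{U\setminus C}^*$ on $\partial_\calf G$; the goal is to conclude $v\le f_{U\setminus C}^*$ on $G$. Since $\overline G^{\calf}$ is a (Euclidean-bounded, hence relatively compact) subset of the $\calf$-open set $V$, one can choose a bounded $\calf$-open set $U_0$ with $\overline G^{\calf}\subseteq U_0\subseteq \overline{U_0}^{\calf}\subseteq U$; here one uses that the plurifine topology has a base of bounded $\calf$-open sets whose $\calf$-closures stay inside any given $\calf$-open neighborhood, so $U_0$ can be squeezed between $\overline G^{\calf}$ and $U$.

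The key step is then to compare the maximalized functions computed in $U$ and in $U_0$. Set $C_0 = C\cap U_0$, which is $\calf$-closed relative to $U_0$, and let $W$ be the $\calf$-interior of $C_0$; note $V\cap U_0\subseteq W$, so in particular $G\subseteq V\cap U_0\subseteq W$. By Proposition \ref{prop3.6} applied to the bounded set $U_0$, the function ${^{U_0}\!f}_{U_0\setminus C_0}^*$ is $\calf$-maximal on $W$, hence on the smaller $\calf$-open set into which $G$ embeds. The monotonicity remark preceding Example \ref{example2.4} gives ${^{U}\!f}_{U\setminus C}^* \le {^{U_0}\!f}_{U_0\setminus C_0}^*$ on $U_0$ as soon as $U\setminus C$ restricted to $U_0$ contains $U_0\setminus C_0$, which holds since $U_0\setminus C_0 = U_0\setminus C \subseteq (U\setminus C)\cap U_0$. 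The reverse inequality ${^{U_0}\!f}_{U_0\setminus C_0}^* \le {^{U}\!f}_{U\setminus C}^*$ on a full-measure set I would obtain by observing that any competitor $u\in\FPSH_-(U)$ with $u\le f$ on $U\setminus C$ restricts to a competitor on $U_0$ with $u\le f$ on $U_0\setminus C_0$, so ${^{U}\!f}_{U\setminus C} \le {^{U_0}\!f}_{U_0\setminus C_0}$ pointwise and hence the same holds after $\calf$-u.s.c.\ regularization off a pluripolar set; combined with the first inequality and the fact that pluripolar sets have empty $\calf$-interior while both functions are $\calf$-continuous, the two maximalized functions agree on $U_0$, in particular on $\overline G^{\calf}$.

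With this identification in hand, I finish as follows. On $\partial_\calf G$ we have $v\le {^{U}\!f}_{U\setminus C}^* = {^{U_0}\!f}_{U_0\setminus C_0}^*$, and since $\overline G^{\calf}\subseteq W$ and $G$ is a bounded $\calf$-open set with $\overline G^{\calf}$ inside the $\calf$-open set $W$, the $\calf$-maximality of ${^{U_0}\!f}_{U_0\setminus C_0}^*$ on $W$ (from Proposition \ref{prop3.6}) yields $v\le {^{U_0}\!f}_{U_0\setminus C_0}^* = {^{U}\!f}_{U\setminus C}^*$ on $G$. As $G$ and $v$ were arbitrary, $f_{U\setminus C}^*$ is $\calf$-maximal on $V$. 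The main obstacle I anticipate is the bookkeeping around the two ambient sets: one must be careful that $U_0$ can indeed be inserted between $\overline G^{\calf}$ and $U$ (a consequence of the structure of the plurifine topology together with $\overline G^{\calf}$ being relatively compact), and that restricting the maximalization from $U$ to $U_0$ genuinely preserves the constraint set, i.e.\ $U_0\setminus C_0 = U_0\cap(U\setminus C)$, so that the two-sided comparison of maximalized functions goes through cleanly off a pluripolar set.
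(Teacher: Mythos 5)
Your reduction to the bounded case breaks down at the key identification ${^{U_0}\!f}^*_{U_0\setminus C_0}={^{U}\!f}^*_{U\setminus C}$ on $U_0$. Only one direction is available: restricting a global competitor $u\in\FPSH_-(U)$ with $u\le f$ on $U\setminus C$ gives a local competitor, whence ${^{U}\!f}_{U\setminus C}\le {^{U_0}\!f}_{U_0\setminus C_0}$ on $U_0$; note that the argument you offer for the ``reverse inequality'' proves exactly this same direction again, not its converse. The converse is false in general, because a competitor on $U_0$ need not extend to $U$ and is not constrained on $(U\setminus C)\setminus U_0$. Concretely, if $U_0\subseteq C$ (which your prescription allows, since $U_0$ only has to be squeezed between $\overline G$ and $U$), then $U_0\setminus C_0=\emptyset$ and ${^{U_0}\!f}^*_{\emptyset}\equiv 0$, while ${^{U}\!f}^*_{U\setminus C}$ is typically strictly negative on $G$; or take $n=1$, $f\equiv -1$, $U=B(0,10)$, $U\setminus C=\{5<|z|<10\}\cup S$ with $S$ a thin open sector meeting $U_0=B(0,2)$: then ${^{U}\!f}^*\equiv -1$ near the origin (the annulus surrounds it), whereas ${^{U_0}\!f}^*_{U_0\setminus C_0}$ is much larger there. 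Since the only valid inequality goes the wrong way, your final chain $v\le {^{U_0}\!f}^*_{U_0\setminus C_0}={^{U}\!f}^*_{U\setminus C}$ on $G$ collapses: maximality of the \emph{local} maximalization gives $v\le {^{U_0}\!f}^*_{U_0\setminus C_0}$ on $G$, which says nothing about $v\le f^*_{U\setminus C}$. A secondary point: Proposition \ref{prop3.6} is stated for complements of $\calf$-\emph{open} sets, so applying it to $U_0\setminus C_0$ needs the intermediate identity $f_{U_0\setminus C_0}=f_{U_0\setminus W}$ ($W$ the $\calf$-interior of $C_0$); that part is fixable, but the equality of the two maximalizations is not.

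The paper's proof avoids localization entirely and is much shorter: given $G$ and $v$, glue $w=\max(f^*_{U\setminus C},v)$ on $G$ and $w=f^*_{U\setminus C}$ on $U\setminus G$ (Proposition \ref{prop2.2}). The whole point of assuming $C$ relatively $\calf$-closed is that $U\setminus C$ is $\calf$-open, so $f_{U\setminus C}=f$ there and, by $\calf$-upper semicontinuity of $f$, also $f^*_{U\setminus C}=f$ on $U\setminus C$ \emph{exactly}, with no pluripolar exceptional set. Hence $w=f$ on $U\setminus C$, so $w$ is itself a competitor in the definition of $f_{U\setminus C}$, giving $w\le f^*_{U\setminus C}$ on $U$ and in particular $v\le f^*_{U\setminus C}$ on $G$. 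This is precisely what removes the need for the boundedness of $U$ (which in Proposition \ref{prop3.6} was only used to produce the auxiliary function $\psi$ killing the pluripolar set where $f_{U\setminus V}^*>f_{U\setminus V}$); your approach reintroduces boundedness through a detour that changes the maximalized function.
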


\begin{proof}
Let $G$ be an $\calf$-open set such that ${\overline G}\subset V$
and let $v\in \FPSH(G)$ be bounded from above on $G$,  extend
${\cal F}$-upper semicontinuously to $\partial_{\cal F} G$, and
satisfy $v\le f_{U\setminus \overline{V}^{\calf}}^*$ on
${\partial_\calf G}$.
 Let us put
\begin{equation*}
w =
\begin{cases}
{\max} (f_{U\setminus C}^*,v)&\text{on $G$,}\\
f_{U\setminus C}^*& \text{on $U\setminus G$}.
\end{cases}
\end{equation*}
By Proposition \ref{prop2.2}, $w\in\FPSH(U)$.

As $G\subset V \subset C$, we have $w=f_{U\setminus C}^*$ on
$U\setminus C$. But note that $f_{U\setminus C}=f$ on $U\setminus
C$, and as $U\setminus C$ is $\calf$-open and $f$ is $\calf$-upper
semicontinuous, we find that $f_{U\setminus C}^*=f$ on $U\setminus
C$. Therefore, we have $w=f$ on $U\setminus C$, which implies that
$w\le f_{U\setminus C}^*$ on $U$. It follows that $v\le
f_{U\setminus C}^*$. Hence the restriction of $f_{U\setminus C}^*$
to $V$ is ${\cal F}$-maximal.
\end{proof}

In particular, if $V\subseteq U$ is an $\calf$-open set, and $f
\in \FPSH_{-}(U)$, then $f_{U\setminus \overline{V}^{\calf}}^*$
and $f_{U\setminus \overline{V}}^*$ are ${\calf}$-maximal on $V$.

The next proposition translates this back to information about
open sets:

\begin{prop}\label{prop3.8}
Let $U$ be an $\calf$-open subset of $\CC^n$, $f \in \FPSH_{-}(U)$
and $C \subseteq U$ a subset that is relatively $\calf$-closed
with respect to $U$, with $\calf$-interior $V$. Then $f^*_{U
\setminus V}=f^*_{U \setminus C}$ and is therefore $\calf$-maximal
on $V$.
\end{prop}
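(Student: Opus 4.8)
The plan is to show that the two maximalized functions $f^*_{U\setminus V}$ and $f^*_{U\setminus C}$ coincide, after which the $\calf$-maximality on $V$ is immediate from Proposition~\ref{prop3.7}, which already gives that $f^*_{U\setminus C}$ is $\calf$-maximal on $V$. Since $V\subseteq C$, every competitor $u\in\FPSH_-(U)$ with $u\le f$ on $U\setminus C$ also satisfies $u\le f$ on $U\setminus V\supseteq U\setminus C$? No --- the inclusion goes the other way: $U\setminus C\subseteq U\setminus V$, so requiring $u\le f$ on $U\setminus V$ is the \emph{stronger} constraint. Hence $f_{U\setminus V}\le f_{U\setminus C}$ pointwise, and therefore $f^*_{U\setminus V}\le f^*_{U\setminus C}$ on $U$. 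The real content is the reverse inequality.

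For the reverse inequality, first I would observe that on the $\calf$-open set $U\setminus C$ one has $f_{U\setminus C}=f$, and since $f$ is $\calf$-upper semicontinuous, also $f^*_{U\setminus C}=f$ there, exactly as in the proof of Proposition~\ref{prop3.7}. Next, $C\setminus V$ is the $\calf$-boundary part of $C$ relative to $U$, and the key classical fact (the analog of the statement that reduced functions do not change when one removes a pluripolar, or here the $\calf$-negligible, portion of the set) is that $C\setminus V$ is contained in the $\calf$-closure of $U\setminus C$. Indeed, any point of $C$ that is not in the $\calf$-interior $V$ of $C$ is an $\calf$-limit point of the complement $U\setminus C$. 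Thus on $C\setminus V$ the function $f^*_{U\setminus C}$, being $\calf$-continuous and equal to $f$ on $U\setminus C$ (which is $\calf$-dense near each such point), satisfies $f^*_{U\setminus C}\le f$ by $\calf$-upper semicontinuity of $f$ --- more precisely, $f^*_{U\setminus C}(z)\le \calf\text{-}\limsup_{w\to z,\,w\in U\setminus C} f^*_{U\setminus C}(w)=\calf\text{-}\limsup_{w\to z,\,w\in U\setminus C} f(w)\le f(z)$ for $z\in C\setminus V$. Combining, $f^*_{U\setminus C}\le f$ on $(U\setminus C)\cup(C\setminus V)=U\setminus V$. Since $f^*_{U\setminus C}\in\FPSH_-(U)$ by \cite[Theorem 3.9]{EFW}, it is therefore itself an admissible competitor in the definition of $f_{U\setminus V}$, whence $f^*_{U\setminus C}\le f_{U\setminus V}\le f^*_{U\setminus V}$ on $U$. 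Together with the first inequality this gives $f^*_{U\setminus V}=f^*_{U\setminus C}$, and Proposition~\ref{prop3.7} finishes the argument.

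The step I expect to be the main obstacle is the claim that $C\setminus V\subseteq\overline{U\setminus C}^{\calf}$, i.e.\ that off the $\calf$-interior of a relatively $\calf$-closed set every point is $\calf$-adherent to the complement, together with the attendant upper-semicontinuity estimate $\calf\text{-}\limsup_{w\to z,\,w\in U\setminus C}f(w)\le f(z)$. The inclusion itself is a soft point-set-topology fact --- if $z\in C$ were not in $\overline{U\setminus C}^{\calf}$ then $z$ would have an $\calf$-neighborhood in $U$ missing $U\setminus C$, hence contained in $C$, forcing $z\in V$ --- but one must be slightly careful that $U\setminus C$ is $\calf$-open (it is, since $C$ is relatively $\calf$-closed), so that the $\calf$-$\limsup$ over $U\setminus C$ is taken over a genuinely $\calf$-open set $\calf$-accumulating at $z$; the inequality with $f$ then follows from $\calf$-upper semicontinuity of $f$ on $U$. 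With that in hand the remaining bookkeeping is routine.
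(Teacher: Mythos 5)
Your proof is correct, but it runs along a somewhat different track than the paper's. The paper works at the level of the unregularized envelopes: for any competitor $u\in\FPSH_-(U)$ with $u\le f$ on $U\setminus C$, the set $\{u>f\}$ is $\calf$-open (both functions being $\calf$-continuous) and contained in $C$, hence contained in the $\calf$-interior $V$; thus every competitor for $f_{U\setminus C}$ is already a competitor for $f_{U\setminus V}$, which gives $f_{U\setminus V}=f_{U\setminus C}$ in two lines and then the starred equality by regularizing. You instead promote the single regularized function $f^*_{U\setminus C}$ to a competitor for $f_{U\setminus V}$: you use the (correct) topological observation that $C\setminus V\subseteq\overline{U\setminus C}^{\calf}$, the identity $f^*_{U\setminus C}=f$ on the $\calf$-open set $U\setminus C$, and the $\calf$-continuity of $f^*_{U\setminus C}$ (via \cite[Theorem 3.9]{EFW} and the continuity of $\calf$-plurisubharmonic functions) to push the inequality $f^*_{U\setminus C}\le f$ up to the points of $C\setminus V$, hence onto all of $U\setminus V$. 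Your key inclusion and the paper's key containment $\{u>f\}\subseteq V$ are logically equivalent facts about the $\calf$-interior, so the underlying idea is the same; the difference is that the paper's version avoids any boundary-limit estimate and does not need the $\calf$-plurisubharmonicity of the regularized envelope until Proposition \ref{prop3.7} is invoked, whereas your version needs it already inside the equality step. Your chain $f^*_{U\setminus C}\le f_{U\setminus V}\le f_{U\setminus C}\le f^*_{U\setminus C}$ in fact recovers the unregularized equality as well, so nothing is lost; it is simply a slightly heavier route to the same conclusion, with the limsup manipulations at points of $C\setminus V$ (adherence of $U\setminus C$, deleted limits, the $-\infty$ cases) all checking out.
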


\begin{proof}
Since $V \subseteq C$ we immediately find that $f_{U \setminus V}
\leq f_{U \setminus C}$. Now suppose that $u \in \FPSH_{-}(U)$ and
$u \leq f$ on $U \setminus C$. Since $u$ and $f$ are both
$\calf$-continuous, the set $\{u>f\}$ will be $\calf$-open and
contained in $C$. As $V$ is the $\calf$-interior of $C$, this
means that $\{u>f\} \subseteq V$, and so $u \leq f$ on $U
\setminus V$. This proves that $f_{U \setminus V}=f_{U \setminus
C}$, hence $f^*_{U \setminus V}=f^*_{U \setminus C}$, which is
$\calf$-maximal on $V$ by Proposition \ref{prop3.7}.
\end{proof}

For general $\calf$-open $V \subseteq U$, however, it is not true
that $f^*_{U \setminus V}=f^*_{U \setminus \overline{V}^{\calf}}$:

\begin{example}
Let $U=B(0,3) \subseteq \CC$, $V=B(0,2) \setminus C(0,1)$, and
define $f \in \FPSH_{-}(U)$ by $f(z)=|z|^2-10$. Then $U$ and $V$
are both Euclidean open, and
$\overline{V}^{\calf}=\overline{B}(0,2)$. Since $U$ is Euclidean
open in $\CC$, subharmonic functions and finely subharmonic
functions on $U$ are the same, by \cite[Proposition 2.14]{EFW}. So
we can use the maximum principle to see that $f_{U \setminus
\overline{B}(0,2)}^*=f_{U \setminus
\overline{B}(0,2)}=\max(|z|^2-10,-6)$.

On the other hand, $f_{U \setminus V} \leq -9$ on $C(0,1)$, and by
\cite[Theorem 3.9]{EFW} the set $\{f^*_{U\setminus
V}>f_{U\setminus V}\}$ is pluripolar. Since the functions $f_{U
\setminus V}$ and $f^*_{U \setminus \overline{B}(0,2)}$ are
unequal on $C(0,1)$, and $C(0,1)$ is not a pluripolar set, the
functions $f^*_{U \setminus V}$ and $f^*_{U \setminus
\overline{B}(0,2)}$ cannot be equal.
\end{example}

In the case where $U$ is bounded, the argument in the proof of
Proposition \ref{prop3.6} can be applied to get one more equality:

\begin{prop}
Let $U \subseteq \CC^n$ be a bounded $\calf$-open set, and let $A
\subseteq B \subseteq U$ be subsets such that $B \setminus A$ is
pluripolar. Then for any $f \in \FPSH_{-}(U)$ we have $f^*_{U
\setminus A}=f^*_{U \setminus B}$.
\end{prop}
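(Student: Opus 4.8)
The plan is to reuse the approximation technique from the proof of Proposition \ref{prop3.6}. One inclusion is free: since $A \subseteq B$ we have $U \setminus B \subseteq U \setminus A$, so every $u \in \FPSH_{-}(U)$ with $u \le f$ on $U \setminus A$ also satisfies $u \le f$ on $U \setminus B$; hence $f_{U \setminus A} \le f_{U \setminus B}$ and therefore $f^*_{U \setminus A} \le f^*_{U \setminus B}$.

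For the reverse inequality I would first use that $U$ is bounded and $B \setminus A$ is pluripolar to fix, as in the remark following Proposition \ref{prop3.6}, a function $\psi \in \FPSH_{-}(U)$ with $\psi < 0$ and $B \setminus A \subseteq \{\psi = -\infty\}$. Then, given any $u \in \FPSH_{-}(U)$ with $u \le f$ on $U \setminus B$ and any $\alpha > 0$, I would look at $u + \alpha \psi$: on $U \setminus B$ it is $\le u \le f$, and on $B \setminus A$ it equals $-\infty$, so $u + \alpha \psi \le f$ on all of $U \setminus A$. By the definition of the maximalized function this yields $u + \alpha \psi \le f_{U \setminus A} \le f^*_{U \setminus A}$ on $U$ (on any $\calf$-component where $u+\alpha\psi \equiv -\infty$ this is trivial).

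Letting $\alpha \downarrow 0$ gives $u \le f^*_{U \setminus A}$ on $U \setminus \{\psi = -\infty\}$, and since $\{\psi = -\infty\}$ is pluripolar — hence has empty $\calf$-interior — while $u$ and $f^*_{U \setminus A}$ are $\calf$-continuous, the inequality $u \le f^*_{U \setminus A}$ extends to all of $U$. Taking the supremum over all admissible $u$ gives $f_{U \setminus B} \le f^*_{U \setminus A}$, and since $f^*_{U \setminus A}$ is already $\calf$-plurisubharmonic, in particular $\calf$-upper semicontinuous, passing to the $\calf$-upper semicontinuous regularization yields $f^*_{U \setminus B} \le f^*_{U \setminus A}$. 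Together with the first paragraph this proves $f^*_{U \setminus A} = f^*_{U \setminus B}$.

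The argument is essentially routine once the auxiliary $\psi$ is in hand; the only points requiring care are the bookkeeping when $u + \alpha\psi$ happens to be $\equiv -\infty$ on an $\calf$-component of $U$, and the final extension across the $\calf$-negligible set $\{\psi = -\infty\}$ — exactly the mechanism already used in Propositions \ref{prop2.5} and \ref{prop3.6}. I do not anticipate a genuine obstacle here.
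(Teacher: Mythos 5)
Your proposal is correct and takes essentially the same route as the paper: the same trivial inclusion from $U\setminus B\subseteq U\setminus A$, the same auxiliary $\psi\in\FPSH_-(U)$ with pole set containing $B\setminus A$ (available because $U$ is bounded), the same perturbation $u+\alpha\psi$ with $\alpha\downarrow 0$, and the same use of $\calf$-continuity together with the fact that pluripolar sets have empty $\calf$-interior. The only cosmetic difference is in the endgame: you bound each competitor directly by $f^*_{U\setminus A}$ and then regularize, while the paper first gets $f_{U\setminus A}=f_{U\setminus B}$ off a pluripolar set and then invokes the negligibility of $\{f^*>f\}$ from \cite[Theorem 3.9]{EFW}; both are fine.
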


\begin{proof}
Since $A \subseteq B$ we immediately find that $f_{U \setminus A}
\leq f_{U \setminus B}$.

On the other hand, suppose that $u \in \FPSH(U)$ such that $u \leq
f$ on $U \setminus B$. Since $B \setminus A$ is pluripolar, we can
find a plurisubharmonic function $\psi \in \PSH(\Omega)$ such that
$B \setminus A \subseteq \{\psi = - \infty \}$. Since $U$ is
bounded, we may assume that $\psi <0$ on $U$. Now for any $\alpha
>0$ we have $u+\alpha \psi \in \FPSH_{-}(U)$ and $u+\alpha \psi
\leq f$ on $U \setminus A$. Letting $\alpha$ tend to zero, we see
that $f_{U \setminus A}=f_{U \setminus B}$ outside the pluripolar
set $\{\psi = -\infty\}$. By \cite[Theorem 3.9]{EFW}, we know that
$\{f^*_{U \setminus A}>f_{U \setminus A}\}$ and $\{f^*_{U
\setminus B}>f_{U \setminus B}\}$ are also pluripolar. As a union
of three pluripolar sets is pluripolar, we see that $f^*_{U
\setminus A}$ and $f^*_{U \setminus B}$ are equal outside a
pluripolar set. But both these functions are $\calf$-continuous,
and a pluripolar set has an empty $\calf$-interior, so the
functions must be equal on all of $U$.
\end{proof}

\section{Maximal ${\cal F}$-plurisubharmonic and the Monge-Amp\`ere operator}

In view of \cite[Corollary 3.4]{BT} the maximalized function
$u_{\Omega \setminus U}^*$, where $U$ is $\calf$-open, $\Omega$ is
Euclidean open, $U \subseteq \Omega$, and $u \in \PSH_{-}(\Omega)$
is locally bounded, satisfies $(dd^c u_{\Omega \setminus
U}^*)^n|{U}=0$.

This leads one to ask whether one has $(dd^c u)^n=0$ for all
finite $u\in\FMPSH(U)$? In the next theorem we shall prove that
the answer is yes when $u$ is the restriction to $U$ of a locally
bounded plurisubharmonic function on a Euclidean domain containing
$U$. In Theorem \ref{thm3.12} we will show that the answer is yes
in the general case.

\begin{theorem}\label{thm2.5}
Let $U$ be an $\calf$-open subset of a Euclidean open set
$\Omega$. Let $u\in\psh(\Omega)$ be locally bounded. If $u$ is
$\calf$-maximal in $U$, then $(dd^cu)^n|U=0$.
\end{theorem}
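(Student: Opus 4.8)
The plan is to reduce the assertion to bounded $\calf$-open test sets whose Euclidean closure lies in $U$, to realise $u$ as the maximalized function on each such set, and then to appeal to \cite[Corollary 3.4]{BT} as recalled above.

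First I would check that every point of $U$ has a bounded $\calf$-open neighbourhood $G$ with $\overline G\subset U$. Choose a basic plurifine neighbourhood $z_0\in\Omega'\cap\bigcap_{i=1}^{N}\{p_i>c_i\}\subset U$ with $\Omega'$ Euclidean open and $p_i\in\PSH(\Omega')$, shrink $\Omega'$ to a bounded Euclidean open $\Omega''$ with $z_0\in\Omega''$ and $\overline{\Omega''}\subset\Omega'$, and put $G=\Omega''\cap\bigcap_i\{p_i>c_i+\tfrac{1}{k}\}$ for $k$ so large that $z_0\in G$. Since each $p_i$ is upper semicontinuous, $\overline G\subset\overline{\Omega''}\cap\bigcap_i\{p_i\ge c_i+\tfrac{1}{k}\}\subset\Omega'\cap\bigcap_i\{p_i>c_i\}\subset U$. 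Hence $U$ is a union of such sets $G$; by the quasi-Lindel\"of property of the plurifine topology it is the union of countably many of them together with a pluripolar set, and since $(dd^cu)^n$ is a positive measure not charging pluripolar sets it suffices to prove $(dd^cu)^n(G)=0$ for each bounded $\calf$-open $G$ with $\overline G\subset U$.

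Fix such a $G$. Its Euclidean closure is a compact subset of $\Omega$, so choose a bounded Euclidean open $\Omega_1$ with $\overline G\subset\Omega_1\subset\overline{\Omega_1}\subset\Omega$; subtracting a constant, which affects neither $(dd^cu)^n$ nor the $\calf$-maximality of $u$, we may assume $u<0$ on $\overline{\Omega_1}$, so $u|_{\Omega_1}\in\PSH_-(\Omega_1)$ is locally bounded. Set $\phi=u_{\Omega_1\setminus G}$ and $\phi^*=u_{\Omega_1\setminus G}^*$. Every competitor in the supremum defining $\phi$ is $\le u$ on $\Omega_1\setminus G$, while $u$ itself is such a competitor, so $u\le\phi\le 0$ on $\Omega_1$ and $\phi=u$ on $\Omega_1\setminus G$. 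By \cite[Theorem 3.9]{EFW} the function $\phi^*$ is finite, $\calf$-plurisubharmonic on $\Omega_1$ and $\le 0$, and equals $\phi$ off a pluripolar set $B$; and by \cite[Corollary 3.4]{BT}, applied with $\Omega_1$ and $G$ in place of $\Omega$ and $U$, we have $(dd^c\phi^*)^n|G=0$. It therefore suffices to show $\phi^*=u$ on $\Omega_1$.

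The heart of the matter, and the step I expect to cause the most trouble, is to prove $\phi^*\le u$ on $G$ using the $\calf$-maximality of $u$. The obvious candidate test function $\phi^*|_G$ in Definition \ref{def2.1} does not quite work: on $\partial_{\calf}G\subset\Omega_1\setminus G$ one only knows $\phi^*\ge\phi=u$, with possible strict inequality on the pluripolar set $B$. I would remove this defect with the perturbation device from the proof of Proposition \ref{prop2.5}: as $\Omega_1$ is bounded and $B$ is pluripolar there is $\psi\in\FPSH_-(\Omega_1)$ with $\psi<0$ and $B\subset\{\psi=-\infty\}$, and for $\alpha>0$ the function $\phi^*+\alpha\psi$ is $\calf$-plurisubharmonic and $\le 0$ on $G$, extends $\calf$-upper semicontinuously to $\overline G^{\calf}$ (being defined and $\calf$-upper semicontinuous on all of $\Omega_1$), and satisfies $\phi^*+\alpha\psi\le u$ everywhere on $\partial_{\calf}G$ (off $B$ because there $\phi^*=\phi=u$, on $B$ because the value is $-\infty$). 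The $\calf$-maximality of $u$ then gives $\phi^*+\alpha\psi\le u$ on $G$; letting $\alpha\downarrow 0$ gives $\phi^*\le u$ on $G\setminus\{\psi=-\infty\}$, and since $\{\psi=-\infty\}$ has empty $\calf$-interior while $\phi^*$ and $u$ are $\calf$-continuous, in fact $\phi^*\le u$ on $G$. With $u\le\phi\le\phi^*$ this yields $\phi^*=\phi=u$ on $G$, hence $\phi=u$ on all of $\Omega_1$, and therefore $\phi^*=u$ on $\Omega_1$ because $u$ is $\calf$-continuous. Consequently $(dd^cu)^n|G=(dd^c\phi^*)^n|G=0$, which completes the proof. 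The only delicate points are the two pluripolar-type bookkeeping issues --- the passage to a test set with Euclidean-compact closure in $U$, handled by upper semicontinuity of the $p_i$, and the discrepancy between $\phi$ and $\phi^*$, handled by the $\psi$-perturbation --- both of which already occur in the proofs of Propositions \ref{prop2.5} and \ref{prop3.6}.
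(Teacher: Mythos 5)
Your proposal is correct and takes essentially the same route as the paper: cover $U$ by bounded $\calf$-open sets with Euclidean closure in $U$ and in a relatively compact subset of $\Omega$, identify $u$ there with the maximalized function by means of the $\calf$-maximality of $u$, invoke \cite[Corollary 3.4]{BT}, and conclude via the quasi-Lindel\"of property together with the fact that $(dd^cu)^n$ does not charge pluripolar sets. The only (harmless) difference is that the paper applies Definition \ref{def2.1} to each competitor $w\in\PSH_-(W_i)$ separately --- for which $w\le u$ on $\partial_{\calf}V_i\subseteq W_i\setminus V_i$ holds directly, so the identity $u={^{W_i}u^*_{W_i\setminus V_i}}$ follows with no regularization discrepancy --- whereas you apply it to the envelope $\phi^*$ itself and therefore need the $\alpha\psi$-perturbation, a slightly longer but perfectly valid detour.
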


\begin{proof}
%Let $V\subset {\overline V}\subset U$ be $\calf$-open.
%We can cover all of $U$ by such $V$. % Let $x \in U$. According to \cite[Theorem 2.3]{BT}, we can find a small open neighbourhood $B(x,r)$ of $x$, and a function $\varphi \in \PSH(B(x,r)$ such that $x \in \{y \in B(x,r) : \varphi(y)>0\} \subset U$. Without loss of generality we can assume that $\varphi(x)=1$. Then set $V=\{y \in B(x, \frac{r}{2}): \varphi(y) > \frac{1}{2} \}$. Then $V$ is an $\calf$-open neighbourhood of $x$ and $\overline{V} \subset \{y \in \overline{B(x, \frac{r}{2})}: \varphi(y) \geq \frac{1}{2} \} \subset \{y \in B(x,r) : \varphi(y)>0\} \subset U$.

We can cover $U$ with sets $V_i$ that are $\calf$-open and satisfy
$\overline{V_i} \subseteq U$ and $\overline{V_i} \subseteq W_i
\subseteq \overline{W_i} \subseteq \Omega$ for some bounded
Euclidean open set $W_i$. By subtracting a constant if necessary,
we may temporarily assume that $u<0$ on $\overline{W_i}$, which
allows us to look at the maximalized function $$^{W_i} u_{W_i
\setminus V_i}^*=(\sup\{w \in \PSH_{-}(W_i): w \leq u \text{ in }
W_i \setminus V_i\})^*.$$

By \cite[Corollary 3.4]{BT} we have $(dd^c (^{W_i} u_{W_i\setminus
V_i}^*))^n|V_i=0$. Because $u$ is $\calf$-maximal in $U$, we have
$u={^{W_i}u_{W_i \setminus V_i}^*}$ on $W_i$, so $(dd^c
u)^n|{V_i}=0$. The result follows from the quasi-Lindel\"of
property of the plurifine topology and the fact that the
Monge-Amp\`ere measure of a locally bounded plurisubharmonic
function does not charge pluripolar sets.
\end{proof}

We denote by $\QB(\CC^n)$ the $\sigma$-algebra on $\CC^n$
generated by the Borel sets and the pluripolar subsets of $\CC^n$.
If $U$ is an ${\cal F}$-open set in $\CC^n$, we denote by $\QB(U)$
the trace of $\QB(\CC^n)$ on $U$. The elements of $\QB(U)$ are
called quasi-borelian subsets of $U$.

\begin{definition}\label{def3.1}
Let $U$ be an ${\cal F}$-open set in $\CC^n$ and let $(\mu_j)$ and
$\mu$ be measures on $(U,\QB(U))$ that give measure zero to
pluripolar sets. We say that $(\mu_j)$ converges ${\cal
F}$-locally vaguely to $\mu$ if for any $z\in U$ there exists an
${\cal F}$-open $V$ such that $z\in V\subset U$ and
\[
\lim_{j\to +\infty}\int\varphi d\mu_j=\int\varphi d\mu
\]
for every bounded ${\cal F}$-continuous function $\varphi$ with compact
support on $V$.
\end{definition}

\begin{remark}\label{remark3.2}
Let $(\mu_j)$, $\mu$ and $\nu$ be measures on $(U,\QB(U))$ that
give measure zero to pluripolar sets. Suppose that $(\mu_j)$
converges ${\cal F}$-locally vaguely to both $\mu$ and $\nu$. We
will show that $\mu=\nu$.

Indeed, let $z \in U$ and pick $V_z$ such that $\int \varphi
d\mu=\lim_{j \to +\infty} \int \varphi d\mu_j=\int \varphi d\nu$
for every bounded $\calf$-continuous function $\varphi$ with
compact support on $V_z$. Using \cite[Theorem 2.3]{BT} we can find
$r>0$ and $\varphi_1 \in \PSH(B(z,r))$ such that $z \in \{w \in
B(z,r):\varphi_1(w)>0\} \subseteq V_z$. By adapting this
$\varphi_1$, adding a continuous bump function, and adapting a bit
more, we can find an $\calf$-continuous function $\varphi$ on
$\CC^n$ with compact support within $V_z$ such that $0 \leq
\varphi \leq 1$ and $\varphi(w) \equiv 1$ on an $\calf$-open
neighbourhood $W_z$ of $z$.

Using the quasi-Lindel\"{o}f property of the plurifine topology,
we can find a sequence $(z_k)$ of points in $U$ such that
$U=\bigl( \bigcup_{k=1}^\infty W_{z_k} \bigr) \cup K$ where $K$ is
pluripolar. Define $\psi_1=\varphi_{z_1}$,
$\psi_2=\min(\varphi_{z_2},1-\psi_1)$, \ldots,
$\psi_k=\min(\varphi_{z_k},1-(\psi_1+ \ldots +\psi_{k-1}))$,
\ldots. These functions $\psi_k$ form a partition of unity for
$U\setminus K$, and $\mu(K)=\nu(K)=0$. If $\eta$ is a bounded
$\calf$-continuous function $\geq 0$ on $U$, we find that
\begin{align*}
\int_U \eta d\mu&=\int_{U \setminus K} \eta d\mu = \Sigma_{k=1}^{\infty} \int_{U \setminus K} \psi_k \eta d\mu\\
&=\Sigma_{k=1}^{\infty} \int_{U \setminus K} \psi_k \eta d\nu=\int_{U \setminus K} \eta d\nu\\
&=\int_{U} \eta d\nu.\\
\end{align*}
This implies that $\mu=\nu$.
\end{remark}

Recall the following result of Bedford and Taylor, \cite{BT}.

\begin{theorem}[Bedford and Taylor, \cite{BT}]\label{thm3.3}
Let $(u_j^1)$,...,$(u_j^n)$ be monotone sequences of bounded
plurisubharmonic functions on a Euclidean domain $\Omega$,
converging respectively to bounded plurisubharmonic functions
$u_1,...,u_n$. Then for every bounded quasi ${\cal F}$-continuous
function $\varphi$ on  $\Omega$, we have
\[
\lim_{j\to +\infty}\int\varphi dd^cu_j^1\wedge...\wedge dd^cu_j^n
=\int\varphi dd^cu_1\wedge...dd^c u_n.
\]
\end{theorem}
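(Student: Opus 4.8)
The plan is to prove the stated convergence first for bounded \emph{continuous} test functions with compact support in $\Omega$, by induction on $n$, and then to upgrade to bounded quasi-$\calf$-continuous $\varphi$ by a capacity-approximation argument; one may reduce to the compactly supported case by restricting, if necessary, to a relatively compact subdomain and using an exhaustion. Throughout, the Monge--Amp\`ere currents $dd^cu_j^1\wedge\cdots\wedge dd^cu_j^n$ are well-defined positive measures via the Bedford--Taylor inductive construction, and the Chern--Levine--Nirenberg inequalities furnish, for each $\Omega'\Subset\Omega$, a bound $\int_{\Omega'}dd^cu_j^1\wedge\cdots\wedge dd^cu_j^n\le C(\Omega',\Omega)\prod_k\|u_j^k\|_{L^\infty}$ that is \emph{uniform in $j$}, since each monotone sequence $(u^k_j)_j$ with bounded limit is uniformly bounded.

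\emph{Induction on $n$.} For $n=1$ the assertion is the classical weak convergence $dd^cu^1_j\to dd^cu_1$ for a monotone sequence of subharmonic functions with bounded limit, tested against a continuous compactly supported $\varphi$ (by monotone convergence in the increasing case, and by $L^1_{\mathrm{loc}}$-convergence $u^1_j\to u_1$ in the decreasing case). Assume the result through $n-1$. Set $S_j=dd^cu_j^2\wedge\cdots\wedge dd^cu_j^n$ and $S=dd^cu_2\wedge\cdots\wedge dd^cu_n$; the inductive hypothesis together with the uniform mass bound gives $S_j\to S$ weakly as currents. For $\varphi\in C_0^\infty(\Omega)$, integration by parts yields $\int\varphi\,dd^cu_j^1\wedge S_j=\int u_j^1\,dd^c\varphi\wedge S_j$, so it suffices to prove $u_j^1 S_j\to u_1 S$ weakly: in the decreasing case this follows by comparing $u_j^1$ with the fixed function $u_k^1$ for $j\ge k$ and exploiting the upper semicontinuity and quasicontinuity of $u_k^1$, and in the increasing case from monotone convergence and the fact that $u_1$ is the increasing limit. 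This establishes the claim for $\varphi\in C_0^\infty(\Omega)$, hence for $\varphi\in C_0(\Omega)$ by density.

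\emph{Upgrade to quasi-$\calf$-continuous $\varphi$.} The key ingredient is the capacity estimate: for bounded plurisubharmonic functions $v_1,\dots,v_n$ near $\overline{\Omega'}$ with oscillations at most $M$ and any Borel $E\subseteq\Omega'$, one has $\int_E dd^cv_1\wedge\cdots\wedge dd^cv_n\le C\,M^n\,\capa(E,\Omega'')$ for a suitable intermediate $\Omega''$. Since the $u^k_j$ are uniformly bounded, this gives a uniform-in-$j$ absolute continuity of $\mu_j=dd^cu_j^1\wedge\cdots\wedge dd^cu_j^n$, and likewise of the limit $\mu=dd^cu_1\wedge\cdots\wedge dd^cu_n$, with respect to capacity. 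Given bounded quasi-$\calf$-continuous $\varphi$ and $\eps>0$, choose an open $G\subseteq\Omega'$ with $\capa(G,\Omega'')<\eps$ such that $\varphi|_{\Omega'\setminus G}$ is continuous, and extend it by the Tietze theorem to a continuous $\psi$ with $\|\psi\|_\infty\le\|\varphi\|_\infty$; then $\{\varphi\ne\psi\}\subseteq G$, so $\bigl|\int\varphi\,d\mu_j-\int\psi\,d\mu_j\bigr|\le 2\|\varphi\|_\infty CM^n\eps$ uniformly in $j$, and similarly for $\mu$. Combining this with the already-established convergence $\int\psi\,d\mu_j\to\int\psi\,d\mu$ and letting $\eps\to0$ completes the proof.

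\emph{Main obstacle.} I expect the delicate point to be the inductive step, namely deducing $u_j^1 S_j\to u_1 S$ weakly from the mere weak convergence $S_j\to S$ and the boundedness (but only upper semicontinuity) of $u_j^1$; this is precisely where quasicontinuity of plurisubharmonic functions and the Chern--Levine--Nirenberg estimates are indispensable. The capacity estimate invoked in the final upgrade is the second substantive ingredient, and everything else is routine localization.
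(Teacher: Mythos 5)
The paper offers no proof of this statement: Theorem \ref{thm3.3} is imported verbatim from Bedford and Taylor \cite{BT} and used as a black box, so there is no internal argument to compare yours with. Judged on its own, your outline follows what is essentially the route of the cited source: the monotone convergence theorems against continuous test functions, plus the uniform domination of mixed Monge--Amp\`ere measures of uniformly bounded plurisubharmonic functions by capacity (your estimate $\int_E dd^cv_1\wedge\cdots\wedge dd^cv_n\le C M^n\capa(E,\Omega'')$ does hold, e.g.\ because $n!\,dd^cv_1\wedge\cdots\wedge dd^cv_n\le\bigl(dd^c(v_1+\cdots+v_n)\bigr)^n$ and the normalized sum is a competitor in the definition of capacity), and finally the replacement of a quasi-continuous $\varphi$ by a continuous function off an open set of small capacity. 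Three points need repair or at least explicit acknowledgement. First, in your inductive step the increasing case is not ``monotone convergence'': both $u^1_j$ and $S_j$ vary, and the increasing-sequence convergence theorem is itself a capacity/quasicontinuity theorem of Bedford--Taylor; the correct two-sided argument compares $u^1_j$ with a fixed $u^1_k$ ($j\ge k$) and uses that for a \emph{fixed} bounded quasi-continuous $v$ one has $\int\varphi v\,dS_j\to\int\varphi v\,dS$, i.e.\ precisely the machinery you postpone to the ``upgrade'' stage -- so as written that sub-step is the genuine gap in the sketch, not merely a delicate point. Second, the hypothesis here is quasi-$\calf$-continuity, while your Tietze argument uses classical quasi-continuity (continuity off an open set of small capacity); the bridge is that $\calf$-open sets are quasi-open (\cite{BT}), so bounded quasi-$\calf$-continuous functions are quasi-continuous, and this should be stated. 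Third, for $\varphi$ bounded but not compactly supported the two integrals in the statement need not even be finite, so your exhaustion reduction should be understood as proving the local statement tested against compactly supported $\varphi$, which is in fact all that the present paper uses (Theorem \ref{thm3.4}) and all that \cite{BT} asserts. With these caveats the strategy is the right one; none of it is needed verbatim in the paper, since the authors simply cite \cite{BT}.
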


\begin{theorem}\label{thm3.4}
Let $(f_j)$ be a monotone sequence of finite,
$\calf$-plurisubharmonic functions on an $\calf$-domain $U$ in
$\CC^n$ that converge to a finite $f\in\FPSH(U)$. Then the
sequence of measures $(dd^cf_j)^n$ converges ${\cal F}$-locally
vaguely to $(dd^cf)^n$.
\end{theorem}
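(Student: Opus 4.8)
The plan is to reduce the statement to the Euclidean convergence result of Bedford and Taylor (Theorem \ref{thm3.3}) by exploiting the $\calf$-local representation of finite $\calf$-plurisubharmonic functions as differences of bounded plurisubharmonic functions. First I would fix a point $z\in U$. By \cite{EW2} (as recalled in the introduction), $f$ can be represented on some $\calf$-open neighbourhood of $z$ as $f=p-q$ where $p,q$ are bounded plurisubharmonic functions on a bounded Euclidean open set $\omega$ with $z\in V\subseteq \omega$ for a suitable $\calf$-open $V$. The subtle point is that the definition of $(dd^cf)^n$ from \cite{EW} is built precisely from such local representations, so I would want to choose the neighbourhood carefully: pick a bounded Euclidean open $\omega$ and an $\calf$-open $V$ with $z\in V$, $\overline V^{\calf}\subseteq V'\subseteq\overline{V'}^{\calf}\subseteq U$ for some slightly larger $\calf$-open $V'$, such that on $V'$ simultaneously $f$ and all $f_j$ (for $j$ large, or after truncating) admit representations as restrictions of bounded plurisubharmonic functions on $\omega$. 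Because the $f_j$ are monotone and converge to the finite function $f$, on a small enough $\calf$-neighbourhood they are uniformly bounded, and one can arrange that the plurisubharmonic representatives are also monotone in $j$: if $(f_j)$ decreases, write $f_j=p_j-q$ on $\omega$ with $(p_j)$ a decreasing sequence of bounded plurisubharmonic functions and $q$ fixed bounded plurisubharmonic (the increasing case is symmetric). The existence of such a coherent family of representatives is the technical heart of the argument and is where I expect the main obstacle to lie — one must invoke the local approximation theorem of \cite{EW} together with the $\calf$-local structure of $\FPSH$, and check that the representatives can be taken monotone, using Dini-type or Hartogs-type arguments on $\omega$.

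Granting such representatives, the second step is computational bookkeeping. On the $\calf$-open set $V$, by the very definition of the Monge--Amp\`ere measure in \cite{EW}, $(dd^cf_j)^n$ and $(dd^cf)^n$ agree (as measures on $(V,\QB(V))$ giving no mass to pluripolar sets) with the mixed Monge--Amp\`ere currents built from $p_j,q$ and $p,q$ respectively, restricted to $V$; concretely $(dd^c(p_j-q))^n$ expands by multilinearity into a finite alternating sum $\sum_{k=0}^n (-1)^k\binom{n}{k}(dd^cp_j)^{n-k}\wedge(dd^cq)^k$, and similarly for $p-q$. For each fixed $k$, the sequences $(p_j)$ and the constant sequence $q$ are monotone sequences of bounded plurisubharmonic functions on $\omega$ converging to bounded plurisubharmonic limits, so Theorem \ref{thm3.3} applies: for every bounded quasi-$\calf$-continuous $\varphi$ on $\omega$,
\[
\lim_{j\to\infty}\int\varphi\,(dd^cp_j)^{n-k}\wedge(dd^cq)^k=\int\varphi\,(dd^cp)^{n-k}\wedge(dd^cq)^k.
\]
Summing the $n+1$ terms with their signs gives $\lim_{j\to\infty}\int\varphi\,(dd^cf_j)^n=\int\varphi\,(dd^cf)^n$ for every bounded quasi-$\calf$-continuous $\varphi$ on $\omega$, in particular for every bounded $\calf$-continuous $\varphi$ with compact support in $V$. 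Since $\calf$-continuous functions with compact support are in particular quasi-$\calf$-continuous (being $\calf$-continuous, hence finely continuous, hence quasi-continuous off a pluripolar set, and bounded), this is exactly the condition in Definition \ref{def3.1} at the point $z$.

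Finally, since $z\in U$ was arbitrary, the $\calf$-local vague convergence holds at every point of $U$, which is the assertion of the theorem. The only genuinely delicate step, as noted, is the first one: producing, $\calf$-locally around each point, a \emph{single} pair of monotone sequences of bounded plurisubharmonic functions on a common Euclidean open set $\omega$ that simultaneously represent all the $f_j$ and $f$ in the sense required to identify their Monge--Amp\`ere measures with the restrictions of the Euclidean mixed masses. I would handle this by revisiting the construction in \cite{EW}: there $f=\lim q_k$ on $\omega\cap V$ for plurisubharmonic $q_k$ increasing, and $(dd^cf)^n$ is defined as the $\calf$-local vague limit of $(dd^cq_k)^n$; combining this definition with the monotone convergence $f_j\to f$ and a diagonal argument over the doubly-indexed approximants should yield the coherent monotone representatives needed, and once those are in hand the rest is the multilinear expansion and Theorem \ref{thm3.3} as above.
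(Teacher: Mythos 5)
Your proposal is correct and follows essentially the same route as the paper: localize around each point, write $f_j=u_j-\Phi$ and $f=u-\Phi$ on an $\calf$-neighbourhood with a single bounded plurisubharmonic $\Phi$ and a monotone, uniformly bounded sequence $(u_j)$, expand $(dd^cf_j)^n$ multilinearly, and apply Theorem \ref{thm3.3} term by term to bounded $\calf$-continuous test functions with compact support. The step you single out as the technical heart---producing such coherent monotone representatives on a common Euclidean neighbourhood---is not built by hand in the paper but is exactly the content of \cite[Theorem 2.4]{EFW}, which the paper simply invokes.
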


\begin{proof}
As $(f_j)$ is monotone, the function $f$ and all of the functions
$f_j$ are ${\cal F}$-locally uniformly bounded on $U$. By
\cite[Theorem 2.4]{EFW}, for every $z\in U$ there exists an open
ball $B_z=B(z,r_z)$ in ${\CC}^n$ and an ${\cal F}$-open
neighborhood $V_z\subset B_z$ of $z$, and functions $u_j^z$, $j\in
{\NN}$, $u$ and $\Phi$ that are plurisubharmonic and uniformly
bounded on $B_z$ such that $f_j=u_j-\Phi$, $f=u-\Phi$ on $V_z$ and
the sequence $(u_j)$ is monotone. Recall from \cite{EW} that
$$(dd^cf_j)^n=\sum_{p=0}^n (-1)^p {{n}\choose{p}} (dd^c u_j)^p
\wedge (dd^c \Phi)^{n-p} \quad \textrm{on } V_z.$$ Let $\varphi$
be a bounded ${\cal F}$-continuous function on $V_z$ that equals
$0$ outside of a compact $K\subset V_z$. Then according to Theorem
\ref{thm3.3}
\begin{equation*}
\begin{split}
\lim_{j\to +\infty}\int\varphi (dd^cf_j)^n=
\lim_{j\to +\infty}\sum_{p=1}^n (-1)^p  {{n}\choose{p}} \int\varphi(dd^c(u_j)^p\wedge(dd^c\Phi))^{n-p}\\
=\sum_{p=1}^n(-1)^p  {{n}\choose{p}}\int\varphi(dd^cu)^p\wedge (dd^c\Phi))^{n-p}
=\int\varphi (dd^cf)^n.
\end{split}
\end{equation*}
Hence the sequence of measures $(dd^cf_j)^n$ converges ${\cal
F}$-locally vaguely to $(dd^cf)^n$.
\end{proof}

\begin{cor}\label{cor3.5}
Let $(u_j)$ be a monotone sequence of finite,
$\calf$-plurisubharmonic functions on an $\calf$-domain $U$ in
$\CC^n$ that converge to a finite $u\in\FPSH(U)$. Then
$(dd^cu)^n=0$ if $(dd^cu_j)^n=0$ for all $j$.
\end{cor}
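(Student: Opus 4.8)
The plan is to derive Corollary~\ref{cor3.5} directly from Theorem~\ref{thm3.4} together with the uniqueness statement in Remark~\ref{remark3.2}. The key observation is that the hypothesis $(dd^cu_j)^n=0$ for all $j$ means that the sequence of measures $(dd^cu_j)^n$ is identically the zero measure. By Theorem~\ref{thm3.4}, this sequence converges $\calf$-locally vaguely to $(dd^cu)^n$. But the zero measure obviously also converges $\calf$-locally vaguely to the zero measure (for every $z\in U$, pick any admissible $\calf$-open $V$; then $\int\varphi\,d\mu_j=0=\int\varphi\,d(0)$ for all admissible $\varphi$). So $(dd^cu_j)^n$ converges $\calf$-locally vaguely to both $(dd^cu)^n$ and the zero measure.

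First I would check that the uniqueness of $\calf$-local vague limits established in Remark~\ref{remark3.2} applies here: both $(dd^cu)^n$ and the zero measure are measures on $(U,\QB(U))$ that give measure zero to pluripolar sets --- the former because $u$ is finite $\calf$-plurisubharmonic and its Monge--Amp\`ere measure was shown in \cite{EW} not to charge pluripolar sets, the latter trivially. Hence Remark~\ref{remark3.2} forces $(dd^cu)^n=0$, which is exactly the conclusion.

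Since the argument is essentially a two-line application of results already in place, there is no serious obstacle; the only point requiring a word of care is confirming that the zero measure qualifies as an admissible comparison measure in the sense of Definition~\ref{def3.1} and Remark~\ref{remark3.2}, and that the convergence notion is symmetric enough to let us feed in the constant sequence $\mu_j\equiv 0$. I do not anticipate any difficulty there. One could alternatively phrase the proof without invoking Remark~\ref{remark3.2}: fix $z\in U$, take the neighbourhood $V_z$ from Theorem~\ref{thm3.4}, and observe that $\int\varphi\,(dd^cu)^n=\lim_j\int\varphi\,(dd^cu_j)^n=0$ for every bounded $\calf$-continuous $\varphi$ with compact support in $V_z$; since such $\varphi$ separate enough sets (again by the bump-function construction in Remark~\ref{remark3.2}), $(dd^cu)^n$ vanishes on $V_z$, and as $z$ was arbitrary, $(dd^cu)^n=0$ on $U$. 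I would present the short version using Remark~\ref{remark3.2} as the main proof and perhaps mention the direct argument in passing.
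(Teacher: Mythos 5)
Your argument is exactly the paper's proof: apply Theorem~\ref{thm3.4} to get $\calf$-local vague convergence of $(dd^cu_j)^n$ to $(dd^cu)^n$, note the same sequence trivially converges to the zero measure, and invoke the uniqueness in Remark~\ref{remark3.2}. Correct, and the same approach as in the paper.
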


\begin{proof}
Applying Theorem \ref{thm3.4}, we see that $(dd^c u_j)^n$
converges $\calf$-locally vaguely to $(dd^c u)^n$. However, by
Definition \ref{def3.1} $(dd^c u_j)^n$ converges $\calf$-locally
vaguely to 0 as well. By Remark \ref{remark3.2}, this means that
$(dd^c u)^n=0$.
\end{proof}

\begin{lemma}\label{lemma3.11}
Let $f$ be an $\calf$-plurisubharmonic $\calf$-maximal function
$\le 0$ on an $\calf$-domain $U$ in $\CC^n$. Then for any ${\cal
F}$-open subset $V$ of $U$ such that $\overline{V}\subset U$ we
have $f_{U\setminus V}^*=f_{U\setminus \overline{V}^\calf}^*=f.$
\end{lemma}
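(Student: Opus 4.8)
The plan is to prove the chain of equalities $f_{U\setminus V}^*=f_{U\setminus \overline{V}^\calf}^*=f$ in three stages. First, since $V\subseteq \overline{V}^\calf$, the definition of the maximalized function immediately gives $f_{U\setminus \overline{V}^\calf}\le f_{U\setminus V}$, hence $f_{U\setminus \overline{V}^\calf}^*\le f_{U\setminus V}^*$. Second, since $f\in\FPSH_-(U)$ is itself a competitor in the supremum defining $f_{U\setminus V}$ (as $f\le f$ trivially on $U\setminus V$), we have $f\le f_{U\setminus V}$, and since $f$ is $\calf$-plurisubharmonic, hence $\calf$-upper semicontinuous, we also get $f\le f_{U\setminus V}^*$. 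So it suffices to establish the reverse inequality $f_{U\setminus \overline{V}^\calf}^*\le f$ on $U$, which combined with the two observations above forces all three functions to coincide.

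For this reverse inequality, the idea is to use the $\calf$-maximality of $f$ together with Proposition \ref{prop3.7}: that proposition, applied with $C=\overline{V}^\calf$ (which is relatively $\calf$-closed in $U$, and has $\calf$-interior containing $V$ since $\overline V\subset U$ means $\overline{V}^\calf\subseteq U$), tells us that $f_{U\setminus \overline{V}^\calf}^*$ is $\calf$-maximal on the $\calf$-interior of $\overline{V}^\calf$, in particular on $V$. Now I want to compare $g:=f_{U\setminus \overline{V}^\calf}^*$ with $f$ directly. Outside $\overline{V}^\calf$, every competitor $u$ in the supremum satisfies $u\le f$, and $U\setminus\overline{V}^\calf$ is $\calf$-open while $f$ is $\calf$-upper semicontinuous, so $g=f_{U\setminus\overline{V}^\calf}=f$ there. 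The remaining task is to show $g\le f$ on $\overline{V}^\calf$ as well. Here is where I invoke the hypothesis that $f$ is $\calf$-maximal on all of $U$: take a bounded $\calf$-open $G$ with $\overline G\subset V$ (so $\overline G\subset U$); the function $g$ restricted to $G$ is $\calf$-plurisubharmonic, bounded (since $f$ is $\le 0$ and $g\ge f$, and $g\le 0$ as each competitor is $\le 0$), and on $\partial_\calf G$ we should have $g\le f$ — this is the crux.

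The main obstacle is exactly verifying that $g\le f$ on $\partial_\calf G$ for a suitable exhausting family of $\calf$-open sets $G$. The natural approach is to choose $G$ so that $\partial_\calf G$ meets $U\setminus\overline{V}^\calf$, where we already know $g=f$; concretely, pick $G$ of the form $\{z\in V: \psi(z)<c\}$ for an appropriate $\calf$-plurisubharmonic function $\psi$ (or use an exhaustion of $V$ by bounded $\calf$-open sets whose $\calf$-boundaries lie in the region where equality is known), apply $\calf$-maximality of $f$ to conclude $g\le f$ on $G$, and then let $G$ increase to exhaust $V$. One must take care that $\overline{G}^\calf\subset U$ and that $g$ extends $\calf$-upper semicontinuously to $\partial_\calf G$, which follows from $\calf$-continuity of $g$ (as an $\calf$-plurisubharmonic function it is $\calf$-continuous by \cite{EW2}). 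Letting $G$ exhaust $V$ gives $g\le f$ on $V$, and since we already have $g=f$ on $U\setminus\overline{V}^\calf$ and $g\ge f$ everywhere with both functions $\calf$-continuous and pluripolar sets having empty $\calf$-interior, the inequality $g\le f$ propagates to $\partial_\calf V\cap\overline{V}^\calf$, completing the proof that $g=f$ on $U$, hence $f_{U\setminus V}^*=f_{U\setminus\overline{V}^\calf}^*=f$.
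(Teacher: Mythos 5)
Your overall strategy points at the right target, but two steps break down. First, the monotonicity you state is backwards: since $U\setminus\overline{V}^{\calf}\subseteq U\setminus V$, requiring $u\le f$ on the larger set $U\setminus V$ is the \emph{stronger} constraint, so $f_{U\setminus V}\le f_{U\setminus\overline{V}^{\calf}}$, not the inequality you wrote; as written, your three observations only give $f^*_{U\setminus\overline{V}^{\calf}}\le f\le f^*_{U\setminus V}$, which does not force the equalities (with the corrected direction, your reduction to proving $f^*_{U\setminus\overline{V}^{\calf}}\le f$ would be sound). Second, and more seriously, the step you yourself call the crux does not go through in the form proposed. You want to apply the $\calf$-maximality of $f$ to the envelope $g=f^*_{U\setminus\overline{V}^{\calf}}$ on sets $G$ whose $\calf$-boundary lies where $g=f$ is already known, namely in $U\setminus\overline{V}^{\calf}$. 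But any $\calf$-open $G\subseteq V$ (or contained in the $\calf$-interior of $\overline{V}^{\calf}$) satisfies $\partial_{\calf}G\subseteq\overline{G}^{\calf}\subseteq\overline{V}^{\calf}$, so its $\calf$-boundary can never lie in $U\setminus\overline{V}^{\calf}$, and no canonical exhaustion $\{\psi<c\}$ of an arbitrary $\calf$-open set with this property is available. Your final propagation step also fails: $\overline{V}^{\calf}\setminus V$ need not be pluripolar (take $V=B(0,1)\setminus\partial B(0,1/2)$, whose plurifine closure contains the non-pluripolar sphere), so knowing $g\le f$ on $V$ and on $U\setminus\overline{V}^{\calf}$ does not propagate to all of $U$ by an empty-$\calf$-interior argument.

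The gap disappears if you apply maximality not to the envelope but to each individual competitor, which is the paper's route. If $u\in\FPSH_{-}(U)$ and $u\le f$ on $U\setminus V$, then $u|_V$ is an admissible competitor in Definition \ref{def2.1} with $G=V$: it is bounded above, $\calf$-upper semicontinuous on $\overline{V}^{\calf}\subseteq\overline{V}\subset U$ because $u$ is defined on all of $U$, and $u\le f$ on $\partial_{\calf}V\subseteq U\setminus V$ holds automatically from the constraint. Maximality then gives $u\le f$ on $V$, hence $f_{U\setminus V}=f$ and $f^*_{U\setminus V}=f$; no boundary-value issue for the envelope ever arises. For the second equality, let $W$ be the $\calf$-interior of $\overline{V}^{\calf}$: Proposition \ref{prop3.8} (whose proof is exactly the observation that $\{u>f\}$ is $\calf$-open and contained in $\overline{V}^{\calf}$, hence in $W$) gives $f^*_{U\setminus\overline{V}^{\calf}}=f^*_{U\setminus W}$, and since $\overline{W}=\overline{V}\subset U$, the first argument applied with $G=W$ yields $f^*_{U\setminus W}=f$. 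This also settles the inequality on $\overline{V}^{\calf}\setminus V$ without any pluripolarity argument.
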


\begin{proof}
The equality $f_{U\setminus V}^*=f$ is an immediate consequence of
the definition of ${\calf}$-maximal functions.

If we let $W$ be the $\calf$-interior of $\overline{V}^{\calf}$,
then Proposition \ref{prop3.8} tells us that $f^*_{U \setminus
\overline{V}^{\calf}}=f^*_{U \setminus W}$. And as $W$ is
$\calf$-open and $\overline{W}=\overline{V} \subseteq U$, we have
$f^*_{U \setminus W}=f$ since $f$ is $\calf$-maximal.
\end{proof}

\begin{theorem}\label{thm3.12}
Let $f$ be a finite ${\cal F}$-maximal $\calf$-plurisubharmonic
function on an $\calf$-domain $U$ in $\CC^n$. Then we have $(dd^c
f)^n=0$.
\end{theorem}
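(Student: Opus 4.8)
The plan is to reduce the assertion, which is $\calf$-local in nature, to the already established Theorem~\ref{thm2.5}, by means of the local approximation of $\calf$-plurisubharmonic functions by ordinary plurisubharmonic functions from \cite{EW}. Since $(dd^cf)^n$ is defined $\calf$-locally and puts no mass on pluripolar sets, the quasi-Lindel\"of property of the plurifine topology reduces matters to showing that each $z\in U$ has an $\calf$-open neighbourhood $V$ with $(dd^cf)^n|_V=0$. So fix $z\in U$. As $f$ is finite and $\calf$-continuous, for $j>|f(z)|$ the set $W=\{w\in U:|f(w)|<j\}$ is an $\calf$-open neighbourhood of $z$ on which $f$ is bounded, and $f|_W$ is again $\calf$-maximal, because any bounded $\calf$-open $G$ with $\overline G\subseteq W$ also has $\overline G\subseteq U$. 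Replacing $U$ by $W$ (this affects neither $\calf$-maximality nor $(dd^cf)^n$), I may assume $|f|<M$ on $U$ for some $M>0$.

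Next I would invoke \cite{EW}: there are a Euclidean ball $B$ about $z$, an $\calf$-open neighbourhood $V$ of $z$ with $\overline V\subseteq B$ and $V\subseteq U$, a pluripolar set $P$, and a plurisubharmonic function on $B$ that equals $f$ on $V\setminus P$. Replacing it by its maximum with the constant $-M$ yields a plurisubharmonic function $\phi$ on $B$ that is locally bounded there and still satisfies $\phi=f$ on $V\setminus P$ (since $f>-M$ there); in particular $\phi$ is bounded on the compact set $\overline V$. Because $\phi$ is a bounded plurisubharmonic function equal to $f$ outside a pluripolar set on the $\calf$-open set $V$, the construction of the $\calf$-Monge-Amp\`ere measure in \cite{EW}, together with the $\calf$-locality and the pluripolar-negligibility of the mixed Monge-Amp\`ere measures of bounded plurisubharmonic functions \cite{BT}, will give $(dd^cf)^n|_V=(dd^c\phi)^n|_V$.

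It then remains to show $(dd^c\phi)^n|_V=0$, and for this I would check that $\phi$ is $\calf$-maximal on $V$ and apply Theorem~\ref{thm2.5} with $\Omega=B$. Pick $g\in\psh(\CC^n)$ with $P\subseteq\{g=-\infty\}$. On every bounded $\calf$-open $G$ whose Euclidean closure lies in $V\setminus\{g=-\infty\}\subseteq V\setminus P$ the functions $\phi$ and $f$ coincide, so the defining implication of $\calf$-maximality for $\phi$ on such $G$ is inherited from that for $f$ on $U$; hence $\phi$ is $\calf$-maximal on $V$ away from the pluripolar set $\{g=-\infty\}$, and Proposition~\ref{prop2.5} promotes this to $\calf$-maximality of $\phi$ on all of $V$. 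Theorem~\ref{thm2.5} then applies to the locally bounded $\phi\in\psh(B)$ and gives $(dd^c\phi)^n|_V=0$, whence $(dd^cf)^n|_V=0$. Since $z\in U$ was arbitrary, $(dd^cf)^n=0$ on $U$.

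The step I expect to require the most care is the identification $(dd^cf)^n|_V=(dd^c\phi)^n|_V$: it rests on the precise way the Monge-Amp\`ere measure of a finite $\calf$-plurisubharmonic function is assembled in \cite{EW} from $\calf$-local representations $f=u-\Phi$ as differences of bounded plurisubharmonic functions, together with the fact that the mixed Monge-Amp\`ere measures occurring there depend, over an $\calf$-open set, only on the functions modulo pluripolar sets. The remaining ingredients --- the reduction to a bounded $f$, the harmless truncation $\phi\mapsto\max(\phi,-M)$, and the transfer of $\calf$-maximality from $f$ to $\phi$ through Proposition~\ref{prop2.5} --- are routine.
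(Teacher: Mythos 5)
The decisive step in your argument is the citation of \cite{EW} as providing, near each point, a Euclidean ball $B$, an $\calf$-open $V\ni z$ and a \emph{single} plurisubharmonic function on $B$ that \emph{equals} $f$ on $V\setminus P$ for some pluripolar $P$. No such result is available. What \cite{EW} (Theorem 2.14 there, the result the paper actually uses) gives is that $f$ is C-strongly $\calf$-plurisubharmonic off an $\calf$-closed pluripolar set, i.e.\ on suitable $\calf$-neighbourhoods $V$ with $\overline V\subset U\setminus E$ it is the \emph{uniform limit} of (continuous) plurisubharmonic functions $f_j$ defined on Euclidean neighbourhoods of $\overline V$; the exact representations available are only of the form $f=u-\Phi$ with $u,\Phi$ bounded plurisubharmonic. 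Note how strong your claimed input is: since $f$ and any candidate $\phi\in\PSH(B)$ are both $\calf$-continuous and pluripolar sets have empty $\calf$-interior, equality on $V\setminus P$ would force $f=\phi$ on all of $V$, i.e.\ every finite $\calf$-plurisubharmonic function would be $\calf$-locally the restriction of a genuine plurisubharmonic function --- which is precisely what fails in general and is the reason the theory works with differences and with uniform approximation. So the step is not a citation of \cite{EW} but an unproved (and in general false) strengthening of it.

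Once only uniform approximation is available, the rest of your scheme does not go through as written: you cannot transfer $\calf$-maximality from $f$ to an approximant $f_j$ by ``they coincide off a pluripolar set'', nor identify $(dd^cf)^n|_V$ with $(dd^cf_j)^n|_V$. Repairing this is exactly where the paper's proof does its work: by $\calf$-maximality and Lemma \ref{lemma3.11} one has ${}^Vf^*_{V\setminus O}=f$, and the maximalized functions ${}^{O_j}(f_j)^*_{O_j\setminus O}$ of the approximants squeeze uniformly (indeed monotonically) onto $f$, satisfy $(dd^c\,\cdot\,)^n=0$ on $O$ by \cite[Corollary 3.4]{BT}, and then the convergence result (Corollary \ref{cor3.5}, via Theorem \ref{thm3.4}) yields $(dd^cf)^n=0$ on $O$. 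Your surrounding reductions (localizing to $\{|f|<j\}$, the truncation by $-M$, the use of Proposition \ref{prop2.5} and Theorem \ref{thm2.5}) are sound in themselves, but they all hang on the nonexistent equality $f=\phi$ off a pluripolar set, so the proposal has a genuine gap at its core.
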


\begin{proof}
By \cite[Theorem 2.14]{EW}, there exists an ${\cal F}$-closed
pluripolar set $E$ such that $f$ is C-strongly
$\calf$-plurisubharmonic on $U\setminus E$. Let $z$ be an element
of $U\setminus E$, then there is an ${\cal F}$-neighbourhood
$V\subset {\overline V}\subset U\setminus E$ of $z$ and a sequence
$(f_j)$ of plurisubharmonic functions on neighbourhoods of
$\overline{V}$ that converge uniformly to $f$ on $\overline{V}$.
Since $f$ is $\calf$-continuous, we can shrink $V$ a bit, to
ensure that $f$ is bounded on $\overline{V}$. By subtracting a
constant from $f$ and all $f_j$, we may assume that $f<0$ on
$\overline{V}$.

For each $j$, $f_j$ is plurisubharmonic on some Euclidean open set
$O_j$ containing ${\overline V}$. We may assume that the sequence
$(O_j)$ is decreasing, and that $f_{j+1}\geq f_j$ on $O_{j+1}$
(use the uniform convergence to adjust the sequence $(f_j)$ to
become increasing on $\overline{V}$, and then replace $f_{j+1}$ by
$\max(f_j,f_{j+1})$ on $O_{j+1}$). Now let $m$ be a lower bound
for $f$ on $\overline{V}$, and replace each $f_j$ by
$\max(f_j,m)$, to make these (upper semicontinuous) functions
locally bounded. Let $\epsilon >0$, then we can find $j_0$ such
that $f_j \le f \le f_j+\epsilon$ on $V$ for all $j\ge j_0$. Let
$O$ be an ${\cal F}$-open set such that $O\subset {\overline
O}\subset V$, then on $V$ we have for all $j \geq j_0$ (using
Lemma \ref{lemma3.11})
$$f-\epsilon \leq f_j \le {^{O_j}(f_j)}^*_{O_j\setminus O} \le {^{O_j}(f_j)}^*_{V \setminus O}
\le {^V(f_j)}^*_{V\setminus O} \le {^Vf}^*_{V\setminus O}=f,$$
which implies that the sequence of restrictions to $V$ of the
functions ${^{O_j}}(f_j)^*_{O_j\setminus O}$ converges uniformly
to $f$. However, we know that $(dd^c({^{O_j}(f_j)}^*_{O_j\setminus
O}))^n=0$ on $O$ by \cite[cor 3.4]{BT}. Now note that the sequence
${^{O_j}}(f_j)^*_{O_j\setminus O}$ is increasing on $V$:
${^{O_{j}}}(f_j)^*_{O_{j}\setminus O} \le {^{O_j}(f_j)}^*_{O_{j+1}
\setminus O} \leq {^{O_{j+1}}}(f_j)^*_{O_{j+1}\setminus O} \leq
{^{O_{j+1}}}(f_{j+1})^*_{O_{j+1}\setminus O}$. Hence we can apply
Corollary \ref{cor3.5} to see that $(dd^cf)^n=0$ on $O$. As we can
cover all of $U \setminus E$ by such sets $O$, we can now see that
$(dd^cf)^n=0$, using the definition of the Monge-Amp\`{e}re
operator for $\calf$-psh functions in \cite[def 4.5]{EW}.
\end{proof}

\begin{cor}\label{cor3.13}
Let $f$ be a finite ${\cal F}$-plurisubharmonic function $\le 0$
on an $\calf$-domain $U$ in $\CC^n$, and  $V$ an ${\cal F}$-open
subset of $U$, then we have $(dd^cf_{U\setminus
\overline{V}^{\calf}}^*)^n=0$ and $(dd^cf_{U\setminus
\overline{V}}^*)^n=0$ on $V$.

Also, if $U$ is bounded, or if $V$ equals the $\calf$-interior of
some $\calf$-closed set, we have $(dd^cf_{U\setminus V}^*)^n=0$ on
$V$.
\end{cor}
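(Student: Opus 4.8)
The plan is to reduce Corollary \ref{cor3.13} to the two maximality theorems we have already established, namely Proposition \ref{prop3.7} (together with Proposition \ref{prop3.8}) and Theorem \ref{thm3.12}. The point is that each of the functions $f_{U\setminus \overline{V}^{\calf}}^*$, $f_{U\setminus \overline{V}}^*$, and $f_{U\setminus V}^*$ (in the appropriate case) is already known to be $\calf$-maximal on $V$; if in addition each of them is finite on $V$, then Theorem \ref{thm3.12} applied on $V$ (which is an $\calf$-open set, though we may need to pass to $\calf$-components to get an $\calf$-domain as required by that theorem) immediately yields $(dd^c(\,\cdot\,))^n=0$ on $V$.

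Concretely, I would proceed as follows. First, record that $\overline{V}$ is $\calf$-closed in $U$ (being Euclidean closed) and that $\overline{V}^{\calf}$ is trivially $\calf$-closed in $U$, so Proposition \ref{prop3.7} applies with $C=\overline{V}$ and with $C=\overline{V}^{\calf}$; in both cases the $\calf$-interior of $C$ contains $V$, so $f_{U\setminus \overline{V}}^*$ and $f_{U\setminus\overline{V}^{\calf}}^*$ are $\calf$-maximal on $V$. (Actually for $\overline V^\calf$ its $\calf$-interior $W$ may be strictly larger than $V$, and Proposition \ref{prop3.8} identifies $f_{U\setminus\overline V^\calf}^*=f_{U\setminus W}^*$, which is $\calf$-maximal on $W\supseteq V$.) For the last statement, when $V$ is the $\calf$-interior of an $\calf$-closed set $C$, Proposition \ref{prop3.8} gives $f_{U\setminus V}^*=f_{U\setminus C}^*$, $\calf$-maximal on $V$; when $U$ is bounded, Proposition \ref{prop3.6} directly gives that $f_{U\setminus V}^*$ is $\calf$-maximal on $V$.

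Next I would address finiteness, which is the real obstacle: Theorem \ref{thm3.12} requires a \emph{finite} $\calf$-maximal function. Since $f\le 0$ is finite on $U$, every competitor $u\in\FPSH_-(U)$ with $u\le f$ on the relevant set satisfies $u\le 0$, so $f_A\le 0$ is bounded above; the danger is $f_A^*=-\infty$ somewhere on $V$. On $U\setminus\overline V^\calf$ (resp.\ $U\setminus\overline V$, resp.\ $U\setminus C$) we have $f_A=f$ which is finite, and by \cite[Theorem 3.9]{EFW} $f_A^*=f_A$ outside a pluripolar set, so $f_A^*$ is finite off a pluripolar set there; but we need finiteness on $V$ itself. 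Here I would invoke the constant function $0\in\FPSH_-(U)$, which satisfies $0\le f$ nowhere unless $f\equiv 0$, so that does not help directly; instead, for $z\in V$ one constructs an explicit non-positive $\calf$-psh minorant of $f$ on $U\setminus V$ that is finite at $z$ — e.g.\ using that $f$ is $\calf$-continuous hence $\calf$-locally representable as a difference of bounded psh functions (as in \cite{EW2}), one takes a suitable truncation/translate of such a local representative extended by a large negative constant, and invokes Proposition \ref{prop2.2} to patch it into a global element of $\FPSH_-(U)$. This shows $f_A(z)>-\infty$, hence $f_A^*(z)>-\infty$, for every $z\in V$, so $f_A^*$ is finite on $V$.

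Finally, with $f_A^*$ finite and $\calf$-maximal on $V$ in hand, I would apply Theorem \ref{thm3.12} on each $\calf$-connected component of $V$ (each is an $\calf$-domain, and the Monge-Amp\`ere operator is $\calf$-local by \cite[def 4.5]{EW}, so it suffices to check the identity componentwise) to conclude $(dd^c f_A^*)^n=0$ on $V$ in all three cases. The main work, as indicated, is the finiteness argument; everything else is bookkeeping that combines the already-proven maximality statements with Theorem \ref{thm3.12}. Alternatively — and this may be cleaner for the write-up — one can avoid the finiteness discussion entirely by noting that $f_A^*$ is the decreasing limit of the finite $\calf$-maximal functions obtained by replacing $f$ with $\max(f,-k)$, applying Theorem \ref{thm3.12} and Corollary \ref{cor3.5} to this monotone sequence; I expect this route to be the one actually used, since Corollary \ref{cor3.5} is tailor-made for exactly such a limiting argument and sidesteps the need to produce explicit finite minorants.
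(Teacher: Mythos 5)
Your proposal follows essentially the same route as the paper: the paper's proof of Corollary \ref{cor3.13} is exactly the one-line combination of Propositions \ref{prop3.6}, \ref{prop3.7} and \ref{prop3.8} (which give the $\calf$-maximality of the relevant maximalized functions on $V$) with Theorem \ref{thm3.12}. The one place where you diverge is the finiteness discussion, and there you overcomplicate matters: $f$ itself lies in $\FPSH_{-}(U)$ and satisfies $f\le f$ on the set $A$ in question, so it is a competitor in the definition of $f_A$, whence $f\le f_A\le f_A^*\le 0$ on all of $U$ and finiteness of $f_A^*$ is immediate from the finiteness of $f$; no patched local minorant is needed, and the alternative via truncations $\max(f,-k)$ and Corollary \ref{cor3.5} (which is not what the paper does, and whose identification of the decreasing limit with $f_A^*$ would itself need justification) is likewise unnecessary. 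Your observation that one should apply Theorem \ref{thm3.12} on the $\calf$-components of $V$, using the $\calf$-locality of the Monge-Amp\`ere operator, is a reasonable piece of bookkeeping that the paper leaves implicit.
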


\begin{proof} The result follows immediately from the above theorem
and the Propositions \ref{prop3.6}, \ref{prop3.7} and \ref{prop3.8}.
\end{proof}

\begin{question}\label{question3.14}
Do we have the converse of the above theorem? The result is
well-known in the case of an Euclidean open $\Omega$ for locally
bounded plurisubharmonic functions on $\Omega$. See for example
\cite[Corollary 3.7.6]{K}.
\end{question}

The following theorem gives an affirmative answer to this question
when $f$ is the restriction to an $\calf$-open subset $U$ of a
locally bounded plurisubharmonic function on a Euclidean domain
$\Omega$.

\begin{theorem}\label{thm3.15}
Let $f$ be a locally bounded plurisubharmonic function on a
Euclidean domain $\Omega$. If $(dd^cf)^n=0$ on an $\calf$-open
subset $U$ of $\Omega$, then the restriction of $f$ to $U$ is
${\cal F}$-maximal.
\end{theorem}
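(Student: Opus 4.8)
The plan is to reduce the statement to the classical characterization of maximal plurisubharmonic functions on Euclidean open sets, using the $\calf$-local structure of $\calf$-maximality and a covering argument. Since $\calf$-maximality can be checked on an $\calf$-open cover (via Definition~\ref{def2.1} applied to bounded $G$ with $\overline{G}$ inside a member of the cover), and since $\calf$-open subsets of $\CC^n$ contain sets of the form $\{\psi > 0\}$ for suitable $\psi \in \PSH$ (by \cite[Theorem~2.3]{BT}), I would first cover $U$ by $\calf$-open sets $V_i$ with $\overline{V_i} \subseteq W_i \subseteq \overline{W_i} \subseteq \Omega$ for bounded Euclidean open $W_i$, on each of which $f$ is bounded; after subtracting a constant, assume $f < 0$ on $\overline{W_i}$.

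Next, the key step: on each $W_i$, I want to compare $f$ with the Euclidean-maximalized function. Given a bounded $\calf$-open $G$ with $\overline{G} \subseteq V_i$ and a competitor $v \in \FPSH(G)$, bounded above, $\calf$-u.s.c.\ on $\overline{G}^\calf$, with $v \le f$ on $\partial_\calf G$, I would form $w = \max(f, v)$ on $G$ and $w = f$ on $W_i \setminus G$, which lies in $\PSH(W_i)$ by Proposition~\ref{prop2.2} together with \cite[Proposition~2.14]{EFW}. The goal is to show $w = f$ on $W_i$. Since $(dd^c f)^n = 0$ on $U \supseteq G$ and $w = f$ off $G$, one has $w \ge f$ everywhere and $w = f$ on a Euclidean neighborhood of $\partial G$; the comparison principle for the Monge--Amp\`ere operator (as in \cite[Corollary~3.7.6]{K} or \cite{BT}) applied on $G$ (or a slightly larger Euclidean open set containing $\overline{G}$ inside $W_i$) then forces $w \le f$ on $G$, provided we know $(dd^c w)^n \ge (dd^c f)^n = 0$ in a usable sense and that $w$ and $f$ agree near $\partial G$. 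The cleanest route is: $w$ is the smallest psh function $\ge f$ agreeing with $f$ outside $G$, hence $w \le {^{W_i}f}^*_{W_i \setminus G}$ — but actually $w \ge f$ and $w = f$ outside $G$, and by \cite[Corollary~3.4]{BT} the maximalized function $^{W_i}f^*_{W_i \setminus G}$ (which equals $\sup$ of psh minorants $\le f$ on $W_i \setminus G$, but we want majorants) — so instead I invoke the comparison principle directly: on $G$, $w$ and $f$ are bounded psh, $w \ge f$, $w = f$ on $\partial G$ (by $\calf$-continuity and $v \le f$ on $\partial_\calf G$, refining the boundary behavior), and $\int_G (dd^c w)^n \le \int_G (dd^c f)^n = 0$ would give $w = f$; but $w = \max(f,v)$ so $(dd^c w)^n \ge \mathbf{1}_{\{w=f\}}(dd^c f)^n$ which is not immediately $0$. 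The correct classical fact to quote is: if $f$ is maximal (equivalently $(dd^cf)^n = 0$) on a Euclidean open set and $w \ge f$ is psh with $w = f$ outside a relatively compact subset, then $w = f$; this is exactly the Euclidean maximality of $f$ on $U$ applied to the domain components, which holds because $(dd^c f)^n = 0$ implies maximality by \cite[Corollary~3.7.6]{K}.

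So the streamlined argument is: by \cite[Corollary~3.7.6]{K}, since $(dd^c f)^n = 0$ on $U$, the function $f$ is maximal on $U$ in the Euclidean sense. Then $w = \max(f,v)$ on $G$, $=f$ on $W_i \setminus G$ is psh on $W_i$, equals $f$ off the relatively compact set $G \subseteq U$, hence (by Euclidean maximality of $f$ on $U$, using that $W_i \setminus G$ meets every component of $U$ appropriately, or by first passing to a Euclidean open $W$ with $\overline G \subseteq W \subseteq U$ as in the proof of Proposition~\ref{prop2.3}) we get $w \le f$ on $W$, so $v \le f$ on $G$. Since the $V_i$ cover $U$ and $\calf$-maximality is checkable $\calf$-locally on bounded $G$ with $\overline G$ inside a cover member, $f$ is $\calf$-maximal on $U$.

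The main obstacle will be the boundary/relative-compactness bookkeeping: ensuring that the comparison set $G$, which is only $\calf$-open, can be enveloped by a Euclidean open set $W$ with $\overline{G} \subseteq W \subseteq U$ so that the Euclidean maximality of $f$ on $U$ applies — this requires $\overline{G}$ (Euclidean closure) to sit inside $U$, which is arranged by choosing $V_i$ with $\overline{V_i} \subseteq U$ and demanding $\overline{G} \subseteq V_i$ in Definition~\ref{def2.1}, exactly as in Proposition~\ref{prop2.3}. A secondary subtlety is that $w = f$ only on $W_i \setminus G$ and not necessarily on all of $\partial W$ for the enveloping Euclidean $W$; but since $G \subseteq V_i$ and $W$ can be taken with $\partial W$ disjoint from $\overline{V_i} \supseteq \overline{G}$, we have $w = f$ on $\partial W$, and Euclidean maximality of $f$ on $U$ finishes the job.
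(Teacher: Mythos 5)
Your argument has a genuine gap at its central step. From $(dd^cf)^n=0$ on $U$ you conclude, via \cite[Corollary 3.7.6]{K}, that ``$f$ is maximal on $U$ in the Euclidean sense,'' and you then want a Euclidean open set $W$ with $\overline G\subseteq W\subseteq U$ on which to exploit this maximality. But $U$ is only $\calf$-open: it need not be a Euclidean neighbourhood of $\overline G$ at all (an $\calf$-open set can have empty Euclidean interior), so such a $W$ does not exist in general, and knowing $\overline G\subseteq U$ or $\overline{V_i}\subseteq U$ does not repair this. Klimek's equivalence is a statement about Euclidean open sets; asserting its analogue on a genuinely plurifine $U$ is essentially the theorem you are trying to prove, so the step is close to circular. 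If instead you take $W\subseteq\Omega$ (which does exist), then $f$ is not known to be maximal on $W$, since $(dd^cf)^n$ may charge $W\setminus U$. A second flaw is the closing reduction: ``$\calf$-maximality is checkable $\calf$-locally on a cover'' is unjustified, because Definition \ref{def2.1} quantifies over \emph{all} bounded $\calf$-open $G$ with $\overline G\subseteq U$, and whether $\calf$-local $\calf$-maximality implies $\calf$-maximality is precisely the open Question \ref{question3.23} of this paper (with a counterexample for non-finite functions in dimension one).

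The route you abandoned in your second paragraph is the one that works, and it is the paper's own proof. Glue $g=\max(f,v)$ on $G$ and $g=f$ on $\Omega\setminus G$; by Proposition \ref{prop2.2} and \cite[Proposition 2.14]{EFW} this is a locally bounded plurisubharmonic function on $\Omega$. Your worry about the measure inequality is unfounded: on the Euclidean open set $W\setminus\overline G$, for any bounded Euclidean open $W$ with $\overline G\subseteq W\subseteq\overline W\subseteq\Omega$, the functions $g$ and $f$ coincide, so their Monge--Amp\`ere measures agree there, while $(dd^cf)^n$ puts no mass on $\overline G$ because $\overline G\subseteq U$ and $(dd^cf)^n|_U=0$; hence $(dd^cg)^n\ge(dd^cf)^n$ on $W$. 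Since $g=f$ near $\partial W$ and $g\ge f$, the comparison principle (\cite[Corollary 4.5]{K}) gives $g\le f$, i.e.\ $v\le f$ on $G$. Running this for an arbitrary admissible $G$ with $\overline G\subseteq U$ yields the $\calf$-maximality of $f|_U$ directly, with no covering argument and no local-to-global step.
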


\begin{proof} Let $G$ be an ${\cal F}$-open set such that
$G\subset {\overline G}\subset U$ and $v$ an upper bounded  ${\cal
F}$-psh function on $G$ such that $\calf$-$\limsup_{z\in G, z\to
\zeta}v(z)\le u(\zeta)$ for any $\zeta\in {\partial_{\cal F}}G$.
Let us put
$$g=
\left\{\begin{array}{cc}
\max(f,v) & {\rm on} \ G\\
f & {\rm on} \ \Omega \setminus G.
\end{array}
\right.$$ Then by Proposition 2.2 and \cite[Proposition
2.14]{EFW}, $g$ is a locally bounded psh function on $\Omega$.
Moreover we have $(dd^cf)^n=(dd^cg)^n$ on $\Omega\setminus
{\overline G}$ and $(dd^cf)^n=0$ on $U$, hence $(dd^cf)^n\le
(dd^cg)^n$. Since $f=g$ on $\Omega\setminus {\overline G}$, it
follows from the comparison principle \cite[Corollary 4.5]{K} that
$g\le f$, and therefore $f=g$. We then deduce that $v\le f$. So we
have proved that  $u$ is ${\cal F}$-maximal.
\end{proof}

Now let us recall the following result
of B{\l}ocki \cite{BL1}:\\

\begin{theorem}[B{\l}ocki,\cite{BL1}]\label{thm3.16}
Let $\Omega$ be a bounded Euclidean domain of ${\bf C}^n$ and let
$u,h\in \PSH\cap L^{\infty}_{loc}(\Omega)$ such that $u\le h$ and
$\lim_{z\to
\partial \Omega}(h(z)-u(z))=0$. Then for $R=\min\{r>0 :\Omega \subset
B(z_0,r) \textrm{ for some }z_0 \in \mathbb{C}^n\}$ we have
$$||h-u||_{L^n(\Omega)}\le \frac{R^2}{4}(\int_{\Omega}(dd^cu)^n)^{\frac{1}{n}}.$$
where $||h-u||_{L^n(\Omega)}=(\int_{\Omega} |h-u|^n
d\lambda)^{\frac{1}{n}}$.
\end{theorem}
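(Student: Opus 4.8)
The plan is to derive the estimate by comparing $h-u$ with the quadratic potential of a ball containing $\Omega$, and then running an integration‑by‑parts chain for mixed Monge–Amp\`ere currents in which a factor $n!$ cancels exactly, producing the sharp constant $R^2/4$. Write $w:=h-u\ge 0$, so $\abs{h-u}=w$ and, by hypothesis, $w\to 0$ at $\partial\Omega$. First I would carry out the usual preliminary reductions: approximate $u,h$ from above by smooth psh functions on slightly smaller domains, and replace $\Omega$ by a strictly pseudoconvex subdomain on whose boundary $w$ still tends to $0$ (shrinking along the sublevel sets $\{w>\delta\}$ and replacing $u$ by $u+\delta$), so that in the sequel one may assume $\Omega$ bounded and strictly pseudoconvex, $u,h$ continuous on $\overline\Omega$ (the measure $(dd^cu)^n$ being well defined since $u\in\PSH\cap L^\infty_{\mathrm{loc}}$), and $w$ continuous on $\overline\Omega$ with $w=0$ on $\partial\Omega$; the estimate for the original data is then recovered by monotone/dominated convergence. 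Fix $z_0$ with $\Omega\subseteq B(z_0,R)$ and set $\rho(z)=\tfrac14(\abs{z-z_0}^2-R^2)$: then $\rho$ is psh, $\rho\le 0$ on $\overline\Omega$, $0\le-\rho\le R^2/4$, and $(dd^c\rho)^n=n!\,d\lambda$ (in the normalisation under which the theorem is stated). Finally let $\Psi\le 0$ solve the Dirichlet problem $(dd^c\Psi)^n=n!\,d\lambda$ on $\Omega$ with $\Psi=0$ on $\partial\Omega$; since $(dd^c\Psi)^n=(dd^c\rho)^n$ and $\Psi=0\ge\rho$ on $\partial\Omega$, the comparison principle gives $\Psi\ge\rho$, hence $0\le-\Psi\le R^2/4$ on $\Omega$, while $\Psi$ itself vanishes on $\partial\Omega$.

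Everything now reduces to the single inequality
\[
\int_\Omega w^n\,(dd^c\Psi)^n\ \le\ n!\,\Bigl(\tfrac{R^2}{4}\Bigr)^{\!n}\int_\Omega(dd^c u)^n ,
\]
because $(dd^c\Psi)^n=n!\,d\lambda$ turns the left side into $n!\int_\Omega w^n\,d\lambda$, so dividing by $n!$ and taking $n$‑th roots gives exactly $\|h-u\|_{L^n(\Omega)}\le\tfrac{R^2}{4}\bigl(\int_\Omega(dd^c u)^n\bigr)^{1/n}$. I would prove the displayed inequality by induction on $k=0,1,\dots,n$ of the statement
\[
\int_\Omega w^n\,(dd^c\Psi)^n\ \le\ \frac{n!}{(n-k)!}\Bigl(\tfrac{R^2}{4}\Bigr)^{\!k}\int_\Omega w^{\,n-k}\,(dd^c u)^{k}\wedge(dd^c\Psi)^{\,n-k},
\]
where $k=0$ is trivial and $k=n$ is the assertion. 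For the step $k\to k+1$ put $m=n-k\ge 1$ and $S=(dd^c u)^{k}\wedge(dd^c\Psi)^{m-1}$, a positive closed current. Since both $w^m$ and $\Psi$ vanish on $\partial\Omega$, integrating by parts twice and using $dd^c(w^m)=m\,w^{m-1}\,dd^cw+m(m-1)\,w^{m-2}\,dw\wedge d^cw$ yields
\[
\int_\Omega w^m\,dd^c\Psi\wedge S
= m\int_\Omega\Psi\,w^{m-1}\,dd^cw\wedge S\;+\;m(m-1)\int_\Omega\Psi\,w^{m-2}\,dw\wedge d^cw\wedge S .
\]
The last term is $\le 0$ ($\Psi\le 0$, $w^{m-2}\ge 0$, $dw\wedge d^cw\wedge S\ge 0$), so it may be dropped; writing $dd^cw=dd^ch-dd^cu$ and discarding also the term $m\int_\Omega\Psi\,w^{m-1}\,dd^ch\wedge S\le 0$ (again $\Psi\le 0$, $dd^ch\wedge S\ge 0$), one is left with
\[
\int_\Omega w^m\,dd^c\Psi\wedge S\ \le\ m\int_\Omega(-\Psi)\,w^{m-1}\,dd^cu\wedge S\ \le\ m\,\tfrac{R^2}{4}\int_\Omega w^{m-1}\,dd^cu\wedge S ,
\]
the last step because $0\le-\Psi\le R^2/4$ pointwise and $w^{m-1}\,dd^cu\wedge S$ is a nonnegative measure. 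Since $\int_\Omega w^{m-1}\,dd^cu\wedge S=\int_\Omega w^{\,n-(k+1)}\,(dd^cu)^{k+1}\wedge(dd^c\Psi)^{\,n-(k+1)}$ and $\tfrac{n!}{(n-k)!}\cdot m=\tfrac{n!}{(n-(k+1))!}$, this is exactly the inductive statement at level $k+1$; running $k$ from $0$ to $n$ multiplies in the constants $n,n-1,\dots,1$, i.e.\ the factor $n!$ that is precisely cancelled by $(dd^c\Psi)^n=n!\,d\lambda$, together with the $n$ factors $R^2/4$.

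The real obstacle is not this calculation but the foundational material it silently uses: one must know that the mixed Monge–Amp\`ere currents $(dd^cu)^{k}\wedge(dd^c\Psi)^{n-k}$, their products with the bounded quasicontinuous functions $w^{j}$, and the ``gradient'' currents $dw\wedge d^cw\wedge S$ are well defined and nonnegative for merely bounded psh $u,h$ and the non‑psh difference $w$, and that the two integrations by parts are legitimate with vanishing boundary terms — which is exactly why one works on a strictly pseudoconvex domain, passes to sublevel sets of $w$, and uses the boundary‑vanishing $\Psi$ rather than $\rho$ itself. It then remains to check that the inequality survives the approximation back to a general bounded domain and general bounded psh data, via monotone and dominated convergence on both sides. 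All of this is standard Bedford–Taylor/Cegrell theory, but it is where the care lies; granting it, the chain above closes and gives the constant $R^2/4$.
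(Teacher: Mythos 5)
The paper itself gives no proof of this statement --- it is quoted from B{\l}ocki with the citation \cite{BL1} --- and the inductive integration-by-parts chain you set up is in substance B{\l}ocki's own argument: pair $w=h-u$ against a nonpositive psh weight of sup-norm at most $R^2/4$, move $dd^c$ of the weight onto $w^m$, expand $dd^c(w^m)$, discard the two nonpositive terms (the weight against $dw\wedge d^cw\wedge S$ and against $dd^ch\wedge S$), and bound the weight by $R^2/4$; your bookkeeping of the factors $m=n-k$ and the final cancellation of $n!$ is correct. The genuine weak point is the preliminary reduction, which as described does not go through: (i) regularizing $u,h$ on slightly smaller domains destroys the hypothesis $\lim_{z\to\partial\Omega}(h(z)-u(z))=0$ on the smaller domain, and you do not say how it is recovered; (ii) the set $\{w>\delta\}$ is in general neither open (a difference of two upper semicontinuous functions has no semicontinuity) nor strictly pseudoconvex, so ``replace $\Omega$ by a strictly pseudoconvex subdomain on whose boundary $w$ still tends to $0$'' is not achieved by shrinking along sublevel sets; and (iii) one cannot in general arrange $u,h$ continuous on $\overline\Omega$ with $w=0$ on $\partial\Omega$. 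Since the whole strictly-pseudoconvex/Dirichlet-problem apparatus exists only to produce $\Psi$, this is a real gap in the proof as written, not a routine omission.

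It is also an avoidable one, because the detour through the Dirichlet problem is unnecessary: the quadratic $\rho(z)=\tfrac14(\abs{z-z_0}^2-R^2)$ already satisfies $(dd^c\rho)^n=n!\,d\lambda$, $\rho\le 0$ and $-\rho\le R^2/4$ on $\Omega$, so you may take $\Psi=\rho$ and drop strict pseudoconvexity, the Bedford--Taylor Dirichlet theorem and the comparison principle altogether. The vanishing of boundary terms should then be obtained not from the weight but from $w$: replace $h$ by $h_\delta:=\max(h-\delta,u)$. Since $h-u\to 0$ at $\partial\Omega$, one has $h_\delta=u$ off a compact subset of $\Omega$, so $w_\delta=h_\delta-u$ is a bounded, quasicontinuous, compactly supported difference of bounded psh functions, and the two integrations by parts in your inductive step become the standard Bedford--Taylor symmetry for such functions against positive closed currents (this is also the correct place to smooth, if desired: regularize $u$ and $h_\delta$, run the computation classically, and pass to the limit by the convergence theorems of \cite{BT}; the right-hand side only involves $u$). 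Note that only $w^m$, not the weight, needs to vanish at the boundary, which is precisely why $\rho$ suffices. Letting $\delta\downarrow 0$, $w_\delta$ increases to $w$ and monotone convergence yields the stated estimate on all of $\Omega$, with no exhaustion of the domain and no boundary continuity ever required. With that replacement your argument closes and coincides with the cited proof of B{\l}ocki.
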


\begin{lemma}\label{lemma3.18}
Let $U$ be an ${\cal F}$-open subset of $\CC^n$, $f\in
\FPSH_{-}(U)$ and $(O_j)$ a decreasing sequence of Euclidean open
sets such that $U\subset \cap_jO_j$. Let $(f_j)$ be a sequence of
functions such that $f_j\in \PSH_{-}(O_j)$ and $f_{j+1}\le f_j$ on
$O_{j+1}$ for all $j$, and such that the sequence $(f_j|_U)$
converges uniformly to $f$ on $U$. Let $G$ be ${\cal F}$-open such
that $\overline{G} \subset U$, and let $(\omega_j)$ be a
decreasing sequence of $\calf$-open sets such that ${\overline
\omega_j} \subset O_j$ for all $j$ and $\cap_j{\overline
\omega_j}={\overline G}$. Then the sequence of the restrictions of
${^{O_j}(f_j)}^*_{O_j\setminus{\overline \omega_j}}$ to $U$ is
decreasing to ${^Uf}_{U\setminus {\overline G}}^*$.
\end{lemma}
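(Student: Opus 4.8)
The plan is to establish the two inequalities separately, exploiting that both sequences $\bigl({^{O_j}(f_j)}^*_{O_j\setminus\overline\omega_j}\bigr|_U\bigr)$ and the constant sequence $\bigl({^Uf}^*_{U\setminus\overline G}\bigr)$ are built out of comparable data. First I would check the sequence on the left is indeed decreasing in $j$: since $O_{j+1}\subseteq O_j$ and $\overline\omega_{j+1}\subseteq\overline\omega_j$, the restriction inequality for maximalized functions (stated just before Proposition \ref{prop3.6}) gives ${^{O_j}(f_j)}^*_{O_j\setminus\overline\omega_j}\le{^{O_{j+1}}(f_j)}^*_{O_{j+1}\setminus\overline\omega_j}$ on $O_{j+1}$, and enlarging the obstacle set from $O_{j+1}\setminus\overline\omega_j$ to $O_{j+1}\setminus\overline\omega_{j+1}$ (a larger set, hence a larger family of competitors bounded above by $f_j$, hence a smaller supremum) together with $f_{j+1}\le f_j$ on $O_{j+1}$ yields ${^{O_{j+1}}(f_{j+1})}^*_{O_{j+1}\setminus\overline\omega_{j+1}}\le{^{O_j}(f_j)}^*_{O_j\setminus\overline\omega_j}$ on $O_{j+1}\supseteq U$. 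So the restrictions to $U$ decrease to some limit $g\in\FPSH_-(U)$ by \cite[Theorem 3.9]{EFW} (or the Fuglede reference), and it remains to identify $g={^Uf}^*_{U\setminus\overline G}$.

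For the inequality $g\ge{^Uf}^*_{U\setminus\overline G}$: fix any competitor $u\in\FPSH_-(U)$ with $u\le f$ on $U\setminus\overline G$. Given $\eps>0$, uniform convergence gives $j_0$ with $f\le f_j+\eps$ on $U$ for $j\ge j_0$, so $u-\eps\le f_j$ on $U\setminus\overline G\supseteq U\setminus\overline\omega_j$. The function $u-\eps$ is $\calf$-plurisubharmonic and negative (for $\eps$ small, after noting $u<0$; if $u$ only $\le 0$ one works with $\min(u,-\eps)$ or argues on $\{u<0\}$ and uses $\calf$-continuity) on $U$, but it need not be defined on all of $O_j$. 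Here I would use that $U\subseteq O_j$ and that $\overline\omega_j\subseteq O_j$ with $U\setminus\overline\omega_j$ the relevant region, gluing $u-\eps$ on a suitable $\calf$-open set between $\overline G$ and $\overline\omega_j$ with $f_j$ outside via Proposition \ref{prop2.2}, to produce a competitor in $\PSH_-(O_j)$ (or $\FPSH_-(O_j)$) that is $\le f_j$ on $O_j\setminus\overline\omega_j$ and equals $u-\eps$ near the point of interest. That shows $u-\eps\le{^{O_j}(f_j)}^*_{O_j\setminus\overline\omega_j}$ on $U$ (first for $f_j$-competitors, then pass to the sup and the $*$-regularization using that pluripolar sets have empty $\calf$-interior), hence $u-\eps\le g$ on $U$; letting $\eps\to 0$ and taking the sup over $u$ gives $g\ge{^Uf}_{U\setminus\overline G}$ on $U$, and then $g\ge{^Uf}^*_{U\setminus\overline G}$ since $g$ is already $\calf$-plurisubharmonic.

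For the reverse inequality $g\le{^Uf}^*_{U\setminus\overline G}$: I would go through a competitor $w$ for ${^Uf}^*_{U\setminus\overline G}$ in the other direction. For each fixed $j$, the function $w_j\isdef{^{O_j}(f_j)}^*_{O_j\setminus\overline\omega_j}$ restricted to $U$ lies in $\FPSH_-(U)$, and on $U\setminus\overline\omega_j$ it satisfies $w_j\le f_j\le f+\eps_j$ where $\eps_j\to 0$ by uniform convergence. Since $\cap_j\overline\omega_j=\overline G$, for any compactly contained $\calf$-open set one eventually has $\overline\omega_j$ close to $\overline G$; combined with $\calf$-continuity of the functions involved, this lets me conclude that the decreasing limit $g$ satisfies $g\le f$ on $U\setminus\overline G$, so $g$ is a competitor in the definition of ${^Uf}_{U\setminus\overline G}$, hence $g\le{^Uf}_{U\setminus\overline G}\le{^Uf}^*_{U\setminus\overline G}$. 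The main obstacle I anticipate is the second direction — precisely the passage from "$w_j\le f_j$ on $O_j\setminus\overline\omega_j$ for each $j$" to "$g\le f$ on $U\setminus\overline G$" — because the obstacle sets $\overline\omega_j$ shrink only to $\overline G$ in the \emph{limit}, so a point of $U\setminus\overline G$ may lie inside $\overline\omega_j$ for small $j$; handling this requires care with the nesting $\cap_j\overline\omega_j=\overline G$, the monotonicity of the sequence, and the fact that $\{f=-\infty\}$-type exceptional sets are pluripolar with empty $\calf$-interior, so that the inequality on the complement of the $\overline\omega_j$'s propagates to the full complement of $\overline G$ after taking the limit.
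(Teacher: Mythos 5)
The genuine gap is in your very first step, the monotonicity of the sequence, which is part of the conclusion of the lemma and which your later arguments also rely on (you need it to know that the pointwise limit exists and is $\calf$-plurisubharmonic before you can call it a competitor). The restriction inequality quoted before Proposition \ref{prop3.6} says that shrinking the ambient domain makes the maximalized function \emph{larger}; it gives ${^{O_j}(f_j)}^*_{O_j\setminus{\overline \omega_j}}\le{^{O_{j+1}}(f_j)}^*_{O_{j+1}\setminus{\overline \omega_j}}$ on $O_{j+1}$, i.e.\ an upper bound for the $j$-th term by the auxiliary function ${^{O_{j+1}}(f_j)}^*_{O_{j+1}\setminus{\overline \omega_j}}$. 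Your other two observations (enlarging the obstacle set shrinks --- not enlarges --- the family of competitors, hence decreases the supremum; and $f_{j+1}\le f_j$) bound the $(j+1)$-st term by the \emph{same} auxiliary function. Two upper bounds by the same quantity do not compare the two terms with each other, so the asserted inequality ${^{O_{j+1}}(f_{j+1})}^*_{O_{j+1}\setminus{\overline \omega_{j+1}}}\le{^{O_j}(f_j)}^*_{O_j\setminus{\overline \omega_j}}$ does not follow. The difficulty is real: a competitor for the $(j+1)$-st maximalization lives only on $O_{j+1}$ and cannot be compared directly with competitors on the larger set $O_j$. The paper's remedy is a gluing: since ${^{O_{j+1}}(f_{j+1})}^*_{O_{j+1}\setminus{\overline \omega_{j+1}}}=f_{j+1}\le f_j$ on $O_{j+1}\setminus\overline{\omega}_{j+1}$, the $\calf$-open set where it exceeds $f_j$ is contained in $\overline{\omega}_{j+1}\subseteq\overline{\omega}_j\subseteq O_j$, so Proposition \ref{prop2.2} glues $\max\bigl(f_j,{^{O_{j+1}}(f_{j+1})}^*_{O_{j+1}\setminus{\overline \omega_{j+1}}}\bigr)$ on that set with $f_j$ on the rest of $O_j$ into a function of $\FPSH_-(O_j)$ equal to $f_j$ on $O_j\setminus\overline{\omega}_j$, hence $\le{^{O_j}(f_j)}^*_{O_j\setminus{\overline \omega_j}}$; this yields the decrease. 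Supply this (or an equivalent extension argument) and that step closes.

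The remainder of your plan is sound and close to the paper. Your gluing of $u-\eps$ with $f_j$ is the paper's device for the inequality $\lim_j{^{O_j}(f_j)}^*_{O_j\setminus{\overline \omega_j}}\ge{^Uf}^*_{U\setminus{\overline G}}$, except that the paper glues the regularized function ${^Uf}^*_{U\setminus{\overline G}}$ itself (after reducing to the case where $G$ is the $\calf$-interior of $\overline G$, so that $\partial_\calf G\subseteq\overline{U\setminus\overline G}^{\calf}$), whereas you glue each competitor on the set $\{u-\eps>f_j\}$; your variant works because that set has $\calf$-closure inside $\overline G\subseteq U$, where $\calf$-continuity of $u$ and $f_j$ gives the boundary hypothesis of Proposition \ref{prop2.2}, and no pluripolar-set argument is needed (the unregularized supremum already dominates $u-\eps$, and regularization only increases it; also $u-\eps<0$ automatically). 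Finally, the ``main obstacle'' you anticipate in the last direction is in fact harmless and is handled pointwise in the paper: ${^{O_j}(f_j)}^*_{O_j\setminus{\overline \omega_j}}=f_j$ on the Euclidean open set $O_j\setminus\overline{\omega}_j$, and any $x\in U\setminus\overline G$ lies outside $\overline{\omega}_j$ for all large $j$ because the $\overline{\omega}_j$ decrease with intersection $\overline G$; hence the limit at $x$ is $\le f_j(x)\downarrow f(x)$, so the (decreasing, $\calf$-plurisubharmonic) limit is a competitor and is $\le{^Uf}^*_{U\setminus{\overline G}}$.
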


\begin{proof}
For any $j$ we have ${^{O_j}(f_j)}_{O_j\setminus{\overline
\omega_j}} = f_j$ on $O_j \setminus \overline{\omega}_j$, and as
$f_j$ is upper semicontinuous, and $O_j \setminus
\overline{\omega}_j$ is open, this means that
${^{O_j}(f_j)}^*_{O_j\setminus{\overline \omega_j}} = f_j$ on $O_j
\setminus \overline{\omega}_j$.

On $O_{j+1} \setminus \overline{\omega}_{j+1}$ we therefore find
that ${^{O_{j+1}}(f_{j+1})}^*_{O_{j+1}\setminus{\overline
\omega_{j+1}}}=f_{j+1} \leq f_j$, so
$\{{^{O_{j+1}}(f_{j+1})}^*_{O_{j+1}\setminus{\overline
\omega_{j+1}}} > f_j\}$ is an $\calf$-open set contained in
$\overline{\omega}_{j+1}\subseteq \overline{\omega}_{j} \subseteq
O_j$. Applying Proposition \ref{prop2.2} allows us to glue these
functions together to an $\calf$-plurisubharmonic function
\begin{equation*}
h_j =
\begin{cases}
\max(f_j,{^{O_{j+1}}(f_{j+1})}^*_{O_{j+1}\setminus{\overline \omega_{j+1}}})&\text{on $\{{^{O_{j+1}}(f_{j+1})}^*_{O_{j+1}\setminus{\overline \omega_{j+1}}} > f_j\}$,}\\
f_j& \text{on $\ O_j \setminus \{{^{O_{j+1}}(f_{j+1})}^*_{O_{j+1}\setminus{\overline \omega_{j+1}}} > f_j\}$.}
\end{cases}
\end{equation*}
As $h_j$ is $\calf$-plurisubharmonic and $h_j = f_j$ on $O_j
\setminus \overline{\omega}_j$, we find that $h_j \leq
{^{O_j}(f_j)}_{O_j\setminus{\overline \omega_j}} \leq
{^{O_j}(f_j)}^*_{O_j\setminus{\overline \omega_j}}$. Hence
${^{O_{j+1}}(f_{j+1})}^*_{O_{j+1}\setminus{\overline
\omega_{j+1}}} \leq {^{O_j}(f_j)}^*_{O_j\setminus{\overline
\omega_j}}$ on $O_{j+1}$.

Since ${^{O_j}(f_j)}^*_{O_j\setminus{\overline \omega_j}} = f_j$
on $O_j \setminus \overline{\omega}_j$, this implies that $\lim_{k
\to + \infty} {^{O_k}(f_k)}^*_{O_k\setminus{\overline \omega_k}}
\leq f_j$ on $U \setminus \overline{\omega}_j$ for all $j$. As the
$f_j$ decrease to $f$, and $\cap_j{\overline \omega_j}={\overline
G}$, we find that $\lim_{k \to + \infty}
{^{O_k}(f_k)}^*_{O_k\setminus{\overline \omega_k}} \leq f$ on $U
\setminus \overline{G}$.

Combining this with the fact that
${^{O_j}(f_j)}^*_{O_j\setminus{\overline \omega_j}} \geq f_j \geq
f$ on $U$ for all $j$, we get
$$\lim_j{^{O_j}(f_j)}^*_{O_j\setminus{\overline \omega_j}}=f \textrm{ in } U\setminus {\overline G}.$$
As the limit of a monotonically decreasing sequence of
$\calf$-plurisubharmonic functions is again
$\calf$-plurisubharmonic (see \cite[p. 84]{F1}), we now find that
$$\lim_j{^{O_j}(f_j)}^*_{O_j\setminus {\overline\omega_j}}\le {^Uf}_{U\setminus {\overline G}}^* \textrm{ in } U.$$

Before proving the inverse inequality, note that the conclusion of
the lemma only depends on $\overline{G}$ rather than $G$ itself.
Let $H$ be the $\calf$-interior of $\overline{G}$. Now $G
\subseteq H \subseteq \overline{G}$, and hence
$\overline{H}=\overline{G}$. So without loss of generality, we may
replace $G$ by $H$ and assume that $G$ equals the $\calf$-interior
of $\overline{G}$.

Using this new assumption, the set $\overline{ U \setminus
\overline{G}}^{\calf}$ has an $\calf$-open complement, whose
intersection with $U$ is contained in $\overline{G}$, hence in
$G$. As $\overline {G} \subseteq U$ and $G$ is $\calf$-open, it
follows that $\partial_{\calf}G \subseteq U \setminus G \subseteq
\overline{ U \setminus \overline{G}}^{\calf}$

\bigskip

We can now prove the inverse inequality. For all $j$ we have,
${^Uf}_{U\setminus {\overline G}} = f \leq f_j$ on $U \setminus
\overline{G}$, and by $\calf$-upper semicontinuity of $f_j$, this
means that ${^Uf}_{U\setminus {\overline G}}^* \leq f_j$ on $U
\setminus \overline{G}$ as well. Since both these functions are in
fact $\calf$-continuous, we can see that the inequality even holds
on $\overline{ U \setminus \overline{G}}^{\calf}$, and hence on
$\partial_{\calf}G$. Then define
$$g_j=
\left\{\begin{array}{cc}
\max(f_j,{^Uf}_{U\setminus {\overline G}}^*) & {\rm on} \ G\\
f_j & {\rm on} \ O_j \setminus G.
\end{array}
\right.$$
By Proposition \ref{prop2.2}, $g_j$ is
$\calf$-plurisubharmonic on $O_j$, and we can see that $g_j \leq
f_j$ on $O_j \setminus \overline{\omega}_j$ since $\overline{G}
\subseteq \overline{\omega}_j$. Therefore, $g_j \leq
{^{O_j}(f_j)}^*_{O_j\setminus {\overline \omega_j}}$, and so
${^Uf}_{U\setminus {\overline G}}^* \leq
{^{O_j}(f_j)}^*_{O_j\setminus {\overline \omega_j}}$ on $G$.
Combining this with the fact that ${^Uf}_{U\setminus {\overline
G}}^* \leq f_j$ on $\overline{U \setminus \overline{G}}^{\calf}$,
we find that ${^Uf}_{U\setminus {\overline G}}^* \leq
{^{O_j}(f_j)}^*_{O_j\setminus {\overline \omega_j}}$ on all of
$U$. Therefore, $$\lim_j{^{O_j}(f_j)}^*_{O_j\setminus{\overline
\omega_j}}\ge {^Uf}_{U\setminus {\overline G}}^*.$$
\end{proof}

Now we can prove the converse of Theorem \ref{thm3.12}:

\begin{theorem}\label{thm3.20}
Let  $f$ be a finite ${\cal F}$-plurisubharmonic function on an
$\calf$-domain $U\subseteq \CC^n$. If $(dd^cf)^n=0$ then there
exists an ${\cal F}$-closed pluripolar set $F\subset U$ such that
$f$ is ${\cal F}$-locally $\calf$-maximal on $U\setminus F$.
\end{theorem}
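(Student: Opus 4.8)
The plan is to combine the local uniform approximation of \cite{EW} with Lemma~\ref{lemma3.18}, Proposition~\ref{prop3.7} and B{\l}ocki's quantitative comparison estimate, Theorem~\ref{thm3.16}. First I would apply \cite[Theorem 2.14]{EW} to get an $\calf$-closed pluripolar set $F\subset U$ on whose complement $f$ is C-strongly $\calf$-plurisubharmonic; it then suffices to prove that $f$ is $\calf$-locally $\calf$-maximal on $U\setminus F$. Fix $z\in U\setminus F$. By the local approximation of \cite{EW} one obtains a Euclidean ball $B$ around $z$, an $\calf$-domain $V$ with $z\in V$ and $\overline V\subseteq B\cap(U\setminus F)$ on which $f$ is bounded, and plurisubharmonic functions $f_j$ on $B$ with $f_j\to f$ uniformly on $\overline V$. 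After subtracting a constant and carrying out the standard adjustments from the proof of Theorem~\ref{thm3.12} (taking maxima with a lower bound of $f$, shifting, regularizing, passing to a subsequence), I may assume the $f_j$ are continuous, locally bounded, negative and plurisubharmonic on $B$, with $f_{j+1}\le f_j$ on $B$. Choose an $\calf$-open neighbourhood $G$ of $z$ with $\overline G\subset V$, and a decreasing sequence $(\omega_j)$ of bounded Euclidean domains with $\overline{\omega}_{j+1}\subset\omega_j$, $\overline{\omega}_1\subset B$ and $\bigcap_j\overline{\omega}_j=\overline G$ (small uniform thickenings of $\overline G$, connected since $\overline G$ is). Set $M_j:={^B(f_j)}^*_{B\setminus\overline{\omega}_j}$. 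By Lemma~\ref{lemma3.18} the restrictions $M_j|_V$ decrease to $M:={^Vf}_{V\setminus\overline G}^*$, which by Proposition~\ref{prop3.7} is $\calf$-maximal on the $\calf$-interior of $\overline G$, hence on $G$; moreover $f\le M$ on $V$. So it suffices to show $M=f$ on $G$.

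The crucial step is a Monge--Amp\`ere mass bound. The decreasing limit $\phi:=\lim_jf_j$ is plurisubharmonic on $B$ and equals $f$ on $V$; the $\calf$-Monge--Amp\`ere measures of $\phi|_V$ and $f|_V$ therefore coincide, equal $(dd^cf)^n|_V=0$, and agree with the restriction of the Euclidean measure $(dd^c\phi)^n$ to $V$ (by the construction of the $\calf$-Monge--Amp\`ere operator in \cite{EW}). Hence $(dd^c\phi)^n$ assigns zero mass to Borel subsets of $V$, and since $\overline G\subset V$ and $\overline{\omega}_j\downarrow\overline G$, this gives $(dd^c\phi)^n(\overline{\omega}_j)=(dd^c\phi)^n(\overline{\omega}_j\setminus V)\to0$. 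Combining this with the weak convergence $(dd^cf_j)^n\to(dd^c\phi)^n$ on $B$ (continuity of the Monge--Amp\`ere operator along decreasing sequences; cf.\ Theorem~\ref{thm3.3}) and a diagonal argument over the nested compacta $\overline{\omega}_m\supseteq\overline{\omega}_j$, one obtains $(dd^cf_j)^n(\overline{\omega}_j)\to0$, and in particular $\int_{\omega_j}(dd^cf_j)^n\to0$.

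Then I would apply Theorem~\ref{thm3.16} on the bounded domain $\omega_j$ to the pair $f_j\le M_j$, both continuous, locally bounded and plurisubharmonic on $\omega_j$, with $M_j-f_j\to0$ at $\partial\omega_j$ because $M_j=f_j$ on $B\setminus\overline{\omega}_j$: with $R$ the circumradius of $\omega_1$ (which bounds that of every $\omega_j$),
\[
\int_{\omega_j}|M_j-f_j|^n\,d\lambda\le\Bigl(\frac{R^2}{4}\Bigr)^{n}\int_{\omega_j}(dd^cf_j)^n\longrightarrow 0 .
\]
Since $G\subseteq\omega_j$ and $M_j\to M$, $f_j\to f$ pointwise on $V\supseteq G$, Fatou's lemma gives $\int_G|M-f|^n\,d\lambda=0$; as $M-f\ge0$ is $\calf$-continuous and a non-empty $\calf$-open set has positive Lebesgue measure, the $\calf$-open set $\{M>f\}$ is empty, i.e.\ $M=f$ on $G$. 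Thus $f$ is $\calf$-maximal on the $\calf$-open neighbourhood $G$ of $z$, and since $z\in U\setminus F$ was arbitrary, $f$ is $\calf$-locally $\calf$-maximal on $U\setminus F$, with $F$ an $\calf$-closed pluripolar set.

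The hardest part will be the mass bound $(dd^cf_j)^n(\overline{\omega}_j)\to0$: it depends on recognising $\phi=\lim_jf_j$ as a genuine Euclidean plurisubharmonic function whose Monge--Amp\`ere measure is carried by $B\setminus V$, and on choosing the Euclidean thickenings $\omega_j$ so that they really do shrink into $V$. A smaller but still delicate point is arranging the approximants $f_j$ to be simultaneously continuous, negative, decreasing, plurisubharmonic on a common ball and uniformly convergent on $\overline V$, and checking B{\l}ocki's boundary hypothesis on $\partial\omega_j$.
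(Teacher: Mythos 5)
Your overall skeleton follows the paper (approximation off an $\calf$-closed pluripolar set via \cite[Theorem 2.14]{EW}, comparison of $f$ with the maximalization ${}^V f^*_{V\setminus\overline G}$ through Lemma~\ref{lemma3.18}, B{\l}ocki's estimate, and Proposition~\ref{prop3.7}), but the step you yourself flag as the hardest contains a genuine gap. You assume the approximants $f_j$ can be taken plurisubharmonic, negative and \emph{decreasing on a fixed ball} $B\supseteq\overline V$. The C-strong approximation only provides $f_j\in\PSH(O_j)$ on Euclidean neighbourhoods $O_j$ of $\overline{V_z}$ that vary with $j$, and uniform convergence on $\overline V$ gives no control whatsoever of $f_j$ on $B\setminus\overline V$; no subtraction of constants or taking of maxima produces monotonicity there. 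The only known device for forcing $f_{j+1}\le f_j$ is to shrink the domain to $\{f_{j+1}<f_j\}$ (this is exactly what the paper does in Theorems~\ref{thm3.12} and~\ref{thm3.20}), after which there is no common Euclidean open set containing $\overline G$ on which all $f_j$ live; and since $V$ is merely $\calf$-open, $\overline G\subset V$ need not have any Euclidean neighbourhood inside $V$ to which one could localize. Consequently your pivotal object $\phi=\lim_j f_j$ need not exist as a plurisubharmonic function on any Euclidean neighbourhood of $\overline G$, and both the Bedford--Taylor weak convergence $(dd^cf_j)^n\to(dd^c\phi)^n$ and the identification of $(dd^c\phi)^n|_V$ with $(dd^cf)^n|_V=0$ lose their basis. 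This is precisely the point where the paper instead invokes Theorem~\ref{thm3.4} ($\calf$-local vague convergence of $(dd^cf_j)^n$ to $(dd^cf)^n$, proved via the local decompositions $f_j=u_j-\Phi$) together with an $\calf$-continuous cutoff $\varphi\equiv1$ near the point, as in Remark~\ref{remark3.2}, to get $\int_{\overline G}(dd^cf_j)^n\to0$; your attempt to bypass Theorem~\ref{thm3.4} by a purely Euclidean weak-convergence argument is where the proof breaks.

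A second, smaller problem is your application of Theorem~\ref{thm3.16} on $\omega_j$ to the pair $f_j\le M_j$ with $M_j={}^{B}(f_j)^*_{B\setminus\overline{\omega}_j}$: the boundary hypothesis $\lim_{z\to\partial\omega_j}(M_j(z)-f_j(z))=0$ does not follow from ``$M_j=f_j$ on $B\setminus\overline{\omega}_j$'', since $\partial\omega_j\subset\overline{\omega}_j$ and at pluri-irregular boundary points the envelope need not attain the values of $f_j$ from inside $\omega_j$. The paper avoids this by the staggered estimate $\|h_{j,k+1}-f_j\|_{L^n(\omega_k)}$ with $\overline{\omega}_{k+1}\subset\omega_k$, so that $h_{j,k+1}-f_j$ vanishes identically on a neighbourhood of $\partial\omega_k$ and the hypothesis is trivially satisfied. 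This particular misstep is repairable by the same staggering, but as written it is unjustified; the final passage from $\|M-f\|_{L^n(G)}=0$ to $M=f$ on $G$ via $\calf$-continuity, and the conclusion via Proposition~\ref{prop3.7}, are fine.
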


\begin{proof}
By \cite[Theorem 2.14]{EW} there exists an ${\cal F}$-closed
pluripolar subset $F$ of $U$ such that $f$ is C-strongly ${\cal
F}$-psh on $U \setminus F$. Let $z\in U\setminus F$, then there
exists an ${\cal F}$-open neighborhood $V_z$ such that $f=\lim
f_j$ uniformly on $\overline{V_z}$, where for all $j$, the
function $f_j$ is a continuous plurisubharmonic function on a
bounded Euclidean open set $O_j$ containing  ${\overline V_z}$. By
shrinking $V_z$, we may assume $f$ to be bounded on
$\overline{V_z}$. By shrinking the $O_j$, we may assume that
$O_{j+1}\subset O_j$, and using the uniform convergence on
$\overline{V_z}$ we can ensure that $f_{j+1} < f_j$ on $V_z$ for
all $j$. Replacing $O_{j+1}$ by $\{w \in
O_{j+1}:f_{j+1}(w)<f_j(w)\}$ will give us $f_{j+1} \le f_j$ on
$O_{j+1}$. Finally, as $f$ is bounded on $\overline{V_z}$ and the
$f_j$ are continuous and converge uniformly to $f$ on
$\overline{V_z}$, we can see that if we shrink the $O_j$ some
more, all $f_j$ will be uniformly bounded.

Now by Theorem \ref{thm3.4}, we find that $(dd^c f_j|_U)^n$
converges $\calf$-locally vaguely to $(dd^c f)^n$. So for every $w
\in V_z$, we can find an $\calf$-open $W$ such that $w \in W
\subseteq V_z$ and $\lim_{j\to +\infty}\int\varphi (dd^c
f_j|_U)^n=\int\varphi (dd^c f)^n$ for every bounded ${\cal
F}$-continuous function $\varphi$ with compact support on $W$. Now
construct a function $\varphi$ as in Remark \ref{remark3.2}:
$\varphi$ is $\calf$-continuous, has compact support within $W$,
satisfies $0 \leq \varphi \leq 1$ and satisfies $\varphi \equiv 1$
in some $\calf$-open neighbourhood $G$ of $w$.

Note that $G \subset \overline{G} \subset V_z$, and let
$(\omega_j)$ be a decreasing sequence of Euclidean open sets such
that for any integer $j$ we have $\overline{\omega_{j+1}}\subseteq
\omega_j$, ${\overline {\omega_j}}\subset O_j$ and
$\cap_j{\overline{\omega_j}}={\overline G}$.

Define $h_{j,k}={^{O_j}(f_j)}^*_{O_j\setminus {\overline
\omega_k}}$ for $k \geq j$, and $h_j={^{O_j}(f_j)}^*_{O_j
\setminus \overline{G}}$. Note that as the $f_j$ are uniformly
bounded, and $\overline{G} \subseteq \overline{\omega_j} \subseteq
O_j$ for all $j$, we can use the maximum principle to see that all
$h_{j,k}$ and all $h_j$ are uniformly bounded by those same
bounds. By applying Lemma \ref{lemma3.18} to the functions
$h_{j,k}$, where we let $k \rightarrow \infty$ and keep $j$ fixed,
we see that $(h_{j,k})_k$ decreases to $h_j$. By applying Lemma
\ref{lemma3.18} again (taking all $\omega_j$ in the lemma to be
$G$ this time), we see that $(h_j)$ decreases to ${^U f}^*_{U
\setminus \overline{G}}$.

Now by Theorem \ref{thm3.16}, if $R=\min\{r>0: O_1 \subseteq
B(z_0,r) \textrm{ for some } z_0 \in \CC^n\}$, we get
$$||h_{j,k+1}-f_j||_{L^n(G)}\leq ||h_{j,k+1}-f_j||_{L^n(\omega_k)} \le \frac{R^2}{4} \bigl(\int_{\omega_k}(dd^cf_j)^n\bigr)^{\frac{1}{n}}$$ for all $k\ge j$. Letting $k \rightarrow \infty$, we deduce that
$$||h_j-f_j||_{L^n(G)}\le \frac{R^2}{4} \bigl(\int_{\overline G}(dd^cf_j)^n\bigr)^{\frac{1}{n}}$$
for all $j$. By letting $j\to +\infty$, we obtain $||{^U
f^*_{U\setminus \overline{G}}}-f||_{L^n(G)}$ on the left-hand
side, by dominated convergence. On the right-hand side, the choice
of $G$ and the fact that $(dd^c f)^n=0$ imply that \[
\int_{\overline G}(dd^cf_j)^n \leq \int_W \varphi(dd^cf_j)^n
\rightarrow \int_W \varphi d(dd^c f)^n =0.\] Therefore, $f={^U
f^*_{U\setminus \overline{G}}}$ on $G$. By Proposition
\ref{prop3.8}, the function ${^U f^*_{U\setminus \overline{G}}}$
is $\calf$-maximal on $G$. This proves the $\calf$-maximality of
$f$ on $G$, and since all of $U \setminus F$ will be covered by
such sets $G$, $f$ is $\calf$-locally $\calf$-maximal on $U
\setminus F$.
\end{proof}

\begin{theorem}\label{cor3.21}
Let $U$ be an ${\cal F}$-domain in $\CC^n$ and let $f\in \FPSH(U)$
be finite. Then $(dd^cf)^n=0$ if and only if $f$ is ${\cal
F}$-locally $\calf$-maximal on the complement of an ${\cal
F}$-closed pluripolar subset of $U$.
\end{theorem}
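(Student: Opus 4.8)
The plan is to obtain the theorem by combining the two principal results of this section, Theorems~\ref{thm3.12} and~\ref{thm3.20}, together with a short localization argument. The implication ``$(dd^cf)^n=0 \Rightarrow f$ is $\calf$-locally $\calf$-maximal off an $\calf$-closed pluripolar set'' is precisely the content of Theorem~\ref{thm3.20}, so nothing needs to be added in that direction.

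For the converse, assume $f$ is $\calf$-locally $\calf$-maximal on $U\setminus F$ for some $\calf$-closed pluripolar set $F\subseteq U$. Fix $z\in U\setminus F$. By Definition~\ref{def3.19} there is an $\calf$-open set $V$ with $z\in V\subseteq U\setminus F$ on which $f$ is $\calf$-maximal; since the plurifine topology is locally connected \cite{EW1}, I may shrink $V$ to an $\calf$-domain, and the restriction of an $\calf$-maximal function to an $\calf$-open subset is again $\calf$-maximal (immediate from Definition~\ref{def2.1}). As $f|_V$ is finite, Theorem~\ref{thm3.12} yields $(dd^cf)^n=0$ on $V$. Letting $z$ vary over $U\setminus F$ gives an $\calf$-open cover of $U\setminus F$ by such sets; by the quasi-Lindel\"of property of the plurifine topology, countably many of them, say $(V_k)_k$, cover $U\setminus F$ up to a pluripolar set $K$. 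Hence $(dd^cf)^n$ vanishes on $\bigcup_k V_k$ and can be carried only by $F\cup K$. Since $(dd^cf)^n$ is, by \cite[def 4.5]{EW}, a positive Borel measure on $U$ that gives no mass to pluripolar sets, and $F\cup K$ is pluripolar, we conclude $(dd^cf)^n=0$ on $U$.

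I do not anticipate any genuine obstacle: the two halves are already established in Theorems~\ref{thm3.20} and~\ref{thm3.12}, and the only delicate point in the converse is the passage from the $\calf$-local vanishing of the Monge-Amp\`ere measure to its global vanishing. This is handled exactly as in the proof of Theorem~\ref{thm2.5}, combining the quasi-Lindel\"of property of the plurifine topology with the fact that $(dd^cf)^n$ puts no mass on pluripolar sets.
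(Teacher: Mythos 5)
Your proposal is correct and follows essentially the same route as the paper: the forward implication is Theorem~\ref{thm3.20}, and the converse combines Theorem~\ref{thm3.12} on the neighborhoods from Definition~\ref{def3.19} with the quasi-Lindel\"of property and the fact that $(dd^cf)^n$ charges no pluripolar set. Your extra care in shrinking the neighborhoods to $\calf$-domains (via local connectedness) and noting that $\calf$-maximality passes to $\calf$-open subsets only makes explicit what the paper leaves implicit.
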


\begin{proof}
One half of this theorem is just Theorem $\ref{thm3.20}$. For the
other implication, combining Theorem \ref{thm3.12} with the
quasi-Lindel\"{o}f property of the plurifine topology shows that
$U$ can be covered by countably many $\calf$-open sets $V$ where
$(dd^c f)^n|V=0$ and a pluripolar set $E$. By the definition of
the Monge-Amp\`{e}re operator for $\calf$-plurisubharmonic
functions in \cite[def 4.5]{EW}, this means that $(dd^cf)^n=0$.
\end{proof}

\begin{cor}
Let $U \subseteq \CC^n$ be an $\calf$-open set, and let $(u_j)$ be
a sequence of finite $\calf$-maximal functions in $\FPSH(U)$,
increasing to a finite $u \in \FPSH(U)$. Then $u$ is
$\calf$-locally $\calf$-maximal outside some closed pluripolar set
$F$.
\end{cor}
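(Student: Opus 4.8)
The plan is to deduce the statement from Theorems \ref{thm3.12} and \ref{thm3.20} together with Corollary \ref{cor3.5}. First I would reduce to the case where $U$ is an $\calf$-domain. Since the plurifine topology is locally connected, $U$ is the disjoint union of its $\calf$-connected components, each of which is an $\calf$-domain that is both $\calf$-open and, being the complement in $U$ of the union of the other components, relatively $\calf$-closed in $U$. By the quasi-Lindel\"of property $U$ is the union of countably many of these components together with a pluripolar set, and since a non-empty $\calf$-open set has non-empty $\calf$-interior it cannot be pluripolar; hence there are at most countably many components $U_1,U_2,\dots$. If for each $k$ one produces an $\calf$-closed pluripolar subset $F_k$ of $U_k$ outside of which $u$ is $\calf$-locally $\calf$-maximal on $U_k$, then $F:=\bigcup_k F_k$ is pluripolar, and it is relatively $\calf$-closed in $U$: a point of $\overline{F}^{\calf}\cap U$ lies in a unique $U_k$, and every $\calf$-neighbourhood of it contained in $U_k$ meets $F\cap U_k=F_k$, whence the point lies in $F_k$. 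So it suffices to treat a single $\calf$-domain $U$.

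Working on such a $U$, each $u_j$ is a finite $\calf$-maximal $\calf$-plurisubharmonic function, so Theorem \ref{thm3.12} gives $(dd^cu_j)^n=0$ for every $j$. Since $(u_j)$ is monotone and converges to the finite function $u\in\FPSH(U)$, Corollary \ref{cor3.5} then yields $(dd^cu)^n=0$ on $U$. Finally, $u$ is a finite $\calf$-plurisubharmonic function on the $\calf$-domain $U$ with vanishing Monge-Amp\`ere measure, so Theorem \ref{thm3.20} furnishes an $\calf$-closed pluripolar set $F\subseteq U$ such that $u$ is $\calf$-locally $\calf$-maximal on $U\setminus F$; reassembling over the components as above finishes the argument.

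The core of the statement is thus immediate once the cited results are in place, the essential analytic input being the $\calf$-local vague convergence of Monge-Amp\`ere measures along monotone sequences (Theorem \ref{thm3.4}, Corollary \ref{cor3.5}). The only genuine chore is the bookkeeping in the first paragraph -- verifying that the exceptional sets obtained on the individual $\calf$-components glue to a single relatively $\calf$-closed pluripolar subset of $U$, for which one uses that the nonzero components are countably many and pairwise $\calf$-clopen in $U$ -- and this I expect to be the main, though minor, obstacle.
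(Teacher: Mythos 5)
Your proof is correct and follows essentially the same route as the paper: Theorem \ref{thm3.12} gives $(dd^cu_j)^n=0$ for every $j$, Corollary \ref{cor3.5} (via Theorem \ref{thm3.4}) gives $(dd^cu)^n=0$, and Theorem \ref{thm3.20} then produces the $\calf$-closed pluripolar exceptional set. The only difference is your preliminary reduction to $\calf$-components --- the paper's three-line proof applies the cited results directly even though they are stated for $\calf$-domains while the corollary allows an $\calf$-open $U$ --- and your bookkeeping there (countably many components via local connectedness and quasi-Lindel\"of, and the gluing of the sets $F_k$ into one relatively $\calf$-closed pluripolar set) is correct and harmless.
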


\begin{proof}
Since all $u_j$ are $\calf$-maximal, Theorem \ref{thm3.12} implies
that $(dd^c u_j)^n=0$ for all $j$. Applying Corollary
\ref{cor3.5}, we see that $(dd^c u)^n=0$. By Theorem
\ref{thm3.20}, we can now find a closed pluripolar set $F$ such
that $u$ is $\calf$-locally $\calf$-maximal on $U \setminus F$.
\end{proof}

\begin{question}\label{question3.23}
Is an ${\cal F}$-locally $\calf$-maximal function on an ${\cal
F}$-open set $U\subseteq \CC^n$ also $\calf$-maximal on $U$?
\end{question}

In the case of a Euclidean open set we have the following
positive answer:

\begin{prop}\label{prop3.24}
Let $f$ be a locally bounded plurisubharmonic  function on a
Euclidean open set $\Omega$. If $f$ is ${\cal F}$-locally
$\calf$-maximal on $\Omega$, then  $f$ is maximal.
\end{prop}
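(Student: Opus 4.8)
The plan is to reduce the assertion to two results already proved in this section, namely Theorem~\ref{thm3.15} and Proposition~\ref{prop2.3}, via the intermediate claim that $(dd^cf)^n=0$ on all of $\Omega$. Here $(dd^cf)^n$ denotes the Monge--Amp\`ere measure of the locally bounded plurisubharmonic function $f$; recall that for such $f$ this classical Bedford--Taylor measure agrees on every Euclidean open subset with the $\calf$-Monge--Amp\`ere measure of \cite[def 4.5]{EW}, so there is no ambiguity in what follows.

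To prove that $(dd^cf)^n=0$, I would argue $\calf$-locally. Fix $z\in\Omega$. By hypothesis there is an $\calf$-open set $V$ with $z\in V\subseteq\Omega$ on which $f$ is $\calf$-maximal, and since the plurifine topology is locally connected we may shrink $V$ to an $\calf$-domain. The restriction of an $\calf$-maximal function to an $\calf$-open subset is again $\calf$-maximal (immediate from Definition~\ref{def2.1}), so $f|_V$ is still $\calf$-maximal, and $f|_V$ is finite and $\calf$-plurisubharmonic. Theorem~\ref{thm3.12} then gives $(dd^cf)^n|_V=0$. Letting $z$ range over $\Omega$ produces an $\calf$-open cover of $\Omega$ on each member of which $(dd^cf)^n$ vanishes; by the quasi-Lindel\"of property of the plurifine topology, $\Omega$ is the union of countably many such sets together with a pluripolar set. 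Since the Monge--Amp\`ere measure of a locally bounded plurisubharmonic function gives no mass to pluripolar sets, we conclude $(dd^cf)^n=0$ on $\Omega$. (Alternatively, one could apply Theorem~\ref{cor3.21} on the $\calf$-components of $\Omega$, the hypothesis holding there with empty exceptional set.)

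It remains to run the chain backwards. Applying Theorem~\ref{thm3.15} with $U=\Omega$, viewing $\Omega$ as an $\calf$-open subset of itself, shows that $f$ is $\calf$-maximal on $\Omega$. Then Proposition~\ref{prop2.3}---whose proof uses no connectivity of the ambient Euclidean set, or else applied on each connected component of $\Omega$---yields that $f$ is maximal, as desired. The proof is thus a chaining of earlier results, and I expect the only points requiring genuine care to be bookkeeping ones: the consistency of the $\calf$-Monge--Amp\`ere operator of \cite[def 4.5]{EW} with the classical one for locally bounded psh functions on Euclidean opens, and the legitimacy of invoking Theorem~\ref{thm3.12}, Theorem~\ref{thm3.15} and Proposition~\ref{prop2.3}, which are stated for $\calf$- or Euclidean domains---handled by shrinking to $\calf$-connected neighbourhoods and by passing to components, respectively.
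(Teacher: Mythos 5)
Your proposal is correct, and its core coincides with the paper's: the whole point is to extract $(dd^cf)^n=0$ from the $\calf$-local $\calf$-maximality. The paper does this in a single stroke by citing Theorem \ref{cor3.21} (with empty exceptional pluripolar set), which is exactly your parenthetical alternative; your main text instead inlines the proof of that implication (restrict to $\calf$-components, note that restriction of an $\calf$-maximal function to an $\calf$-open subset is again $\calf$-maximal, apply Theorem \ref{thm3.12}, then use quasi-Lindel\"of and the fact that the Monge--Amp\`ere measure of a locally bounded plurisubharmonic function does not charge pluripolar sets), all of which is sound and is literally the argument used inside Theorem \ref{cor3.21} and Theorem \ref{thm2.5}. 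The only genuine divergence is the endgame: the paper concludes directly from \cite[Corollary 3.7.6]{K} that a locally bounded plurisubharmonic function with vanishing Monge--Amp\`ere measure is maximal, whereas you chain Theorem \ref{thm3.15} with Proposition \ref{prop2.3} (componentwise). Both are legitimate; your route stays inside the paper's own results and even yields the formally stronger conclusion that $f$ is $\calf$-maximal on $\Omega$, but it rests on the same classical comparison-principle machinery, since that is what drives the proof of Theorem \ref{thm3.15}. The bookkeeping issues you flag (consistency of the $\calf$-Monge--Amp\`ere operator with the Bedford--Taylor one for locally bounded psh functions, passage to components) are real but harmless, and the paper relies on them tacitly as well.
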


\begin{proof}
By Theorem \ref{cor3.21}  we have $(dd^cf)^n=0$, hence  $f$ is
maximal by \cite[Corollary 3.7.6]{K}.
\end{proof}

When working with $n=1$, we also have a positive answer for finite
functions on an $\calf$-open set $U\subseteq \CC^n$, as seen in
Proposition \ref{prop2.9}. However, this result is not valid when
the function is not finite, as seen in the example below.

\begin{example}
Let $\mu$ be a Borel measure on the unit disc $\mathbb{D}$ in $\CC$,
such that $\mu \geq 0$, $\textrm{supp}(\mu)=K$ for some compact
polar set $K \subseteq \mathbb{D}$, and $\mu$ has no atoms. Such a
measure is mentioned in \cite[Example 4.9]{ACCP} for example.

Let $u=-G^{\mu}_{\mathbb{D}}$, then $u$ will be a subharmonic
function on $\mathbb{D}$ such that $dd^c u = \mu$. Since
$\mu(\mathbb{D} \setminus K)=0$, the function $u$ is harmonic on
$\mathbb{D} \setminus K$, and hence maximal on $\mathbb{D}
\setminus K$. By Proposition \ref{prop2.3}, $u$ will be
$\calf$-maximal on $\mathbb{D} \setminus K$ as well.

Now let $z \in K$. Since $K$ is a polar set in $\CC$, each of its
points is finely isolated (see \cite[p. 58]{D}). So we can find an
$\calf$-open set $V \subseteq \mathbb{D}$ such that $V \cap K =
\{z\}$. Since $\mu$ has no atoms, we have $\mu(V)=0$. Now note
that $-u=G^{\mu}_{\mathbb{D}}=\int G_{\mathbb{D}}(\cdot,y)
d\mu(y)= \int G_V(\cdot,y)d\mu(y) + \int \widehat{R}^{\mathbb{D}
\setminus V}_{G_{\mathbb{D}}(\cdot,y)}d\mu(y)$. However, since
$\mu(V)=0$, the first integral will vanish. So for $x \in V$ we get, by Fubini's Theorem,
\begin{align*}
-u(x)&=\int_{\mathbb{D}} \widehat{R}^{\mathbb{D}
\setminus V}_{G_{\mathbb{D}}(\cdot,y)}(x)d\mu(y)=\int_{\mathbb{D}}\bigl(\int_{\mathbb{D}} G_{\mathbb{D}}(\cdot,y) d\varepsilon_x^{\mathbb{D}\setminus V}\bigr)d\mu(y)\\
&=\int_{\mathbb{D}}\bigl(\int_{\mathbb{D}} G_{\mathbb{D}}(\cdot,y)d\mu(y)\bigr)d\varepsilon_x^{\mathbb{D}\setminus V}=\int_{\mathbb{D}} G_{\mathbb{D}}^{\mu} d\varepsilon_x^{\mathbb{D}\setminus V}=\int_{\mathbb{D}} -u d\varepsilon_x^{\mathbb{D}\setminus V}\\
&=\widehat{R}^{\mathbb{D}\setminus V}_{-u}(x)=-u_{\mathbb{D}\setminus V}^*(x).
\end{align*}

By Proposition \ref{prop3.6}, we now find that $u$ is $\calf$-maximal on $V$.

Combining the $\calf$-local $\calf$-maximalities, we see that $u$
is $\calf$-locally $\calf$-maximal on all of $\mathbb{D}$.

\smallskip

However, $u$ is not maximal on $\mathbb{D}$, since maximal
functions on $\mathbb{D}$ are harmonic, and $u$ is not harmonic as
$\Delta u=-\mu \neq 0$. By Proposition \ref{prop2.3}, this means
that $u$ cannot be $\calf$-maximal on $\mathbb{D}$ either. So we
have found an $\calf$-locally $\calf$-maximal function which is
not $\calf$-maximal.

\end{example}

\begin{remark}
The example above also shows that the converse to \ref{invariant geeft locmax} does not hold. The function $u$ is a finely subharmonic function on the $f$-open set $\mathbb{D} \subset \mathbb{C}$, such that $u \leq 0$ and $u$ is $\calf$-locally $\calf$-maximal. However, $-u$ cannot be invariant. Otherwise, this non-negative finely superharmonic function can be written as $-u=G^{\mu}_{\mathbb{D}}+0=G^0_{\mathbb{D}}+(-u)$. But then the uniqueness of such a decomposition would imply $\mu=0$, which is not the case.
\end{remark}

Let $f\in \PSH(\Omega)$ for a Euclidean open set $\Omega$. We
denote by $\NP(dd^cf)^n$ the Borel measure on $\Omega$ defined by
$$\NP(dd^cf)^n(E)=\lim_{j\to +\infty}\int_{E\cap\{f>-j\}}(dd^cf_j)^n$$
for any Borel subset $E$ of $\Omega$, where $f_j=\max(f,-j)$. The
measure $\NP(dd^c f)^n$ does not charge the  pluripolar subsets of
$\Omega$. We have seen in \cite{EW} that $(dd^c(f|U))^n$
$=NP(dd^cf)^n$ on the ${\cal F}$-open $U=\{f>-\infty\}$.

Let us recall the following result of Bedford and Taylor
\cite[Proposition 4.4]{BT}:

\begin{prop}\label{prop3.25}
For any  compact subset $K$ of $\Omega\setminus\{f=-\infty\}$ we have
$$\NP(dd^cf)^n(K)=\lim_{j\to +\infty}\int_K(dd^cf_j)^n.$$
\end{prop}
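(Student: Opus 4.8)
The plan is to split the desired equality into the two inequalities
\[
\liminf_{j\to+\infty}\int_K(dd^cf_j)^n\ge\NP(dd^cf)^n(K)\quad\text{and}\quad\limsup_{j\to+\infty}\int_K(dd^cf_j)^n\le\NP(dd^cf)^n(K),
\]
where $f_j=\max(f,-j)$, so that $(f_j)$ decreases to $f$. The basic tool is the locality of the complex Monge--Amp\`ere operator of locally bounded plurisubharmonic functions with respect to the plurifine topology (the result of Bedford and Taylor, \cite{BT}, on which the construction of $(dd^c\cdot)^n$ for $\calf$-plurisubharmonic functions in \cite{EW} rests): since $f_j$ and $f_k$ agree with $f$ on the $\calf$-open set $\{f>-j\}$ whenever $k\ge j$, we have $(dd^cf_j)^n=(dd^cf_k)^n$ there, while $(dd^cf_j)^n$ vanishes on the Euclidean-open set $\{f<-j\}$. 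Setting $a_m=\int_{K\cap\{f>-m\}}(dd^cf_m)^n$, this locality gives $\int_{K\cap\{f>-m\}}(dd^cf_j)^n=a_m$ for all $j\ge m$, hence $a_m=\NP(dd^cf)^n(K\cap\{f>-m\})$ straight from the definition of $\NP(dd^cf)^n$; as $K\cap\{f>-m\}\uparrow K$ (the hypothesis being exactly $K\cap\{f=-\infty\}=\emptyset$) and $\NP(dd^cf)^n$ is a Borel measure, continuity from below gives $a_m\uparrow\NP(dd^cf)^n(K)$. Since $\int_K(dd^cf_j)^n\ge a_m$ for every $j\ge m$, the liminf inequality follows; in particular the proposition holds when $\NP(dd^cf)^n(K)=+\infty$.

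For the limsup inequality, assume $\NP(dd^cf)^n(K)<+\infty$. Because $\int_K(dd^cf_j)^n=a_j+b_j$ with $b_j:=\int_{K\cap\{f=-j\}}(dd^cf_j)^n\ge0$, this is the same as showing the excess masses $b_j$ tend to $0$. I would prove this using Theorem \ref{thm3.4}: on the $\calf$-open set $\{f>-\infty\}$ the restrictions $f_j$ form a decreasing sequence of \emph{finite} $\calf$-plurisubharmonic functions with limit $f$, so (its conclusion being $\calf$-local) $(dd^cf_j)^n$ converges $\calf$-locally vaguely to $(dd^c(f|_{\{f>-\infty\}}))^n=\NP(dd^cf)^n$. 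Running the partition-of-unity construction from Remark \ref{remark3.2} — and using that on each of the small $\calf$-open sets furnished by Theorem \ref{thm3.4} the function $f$ is bounded, being a difference of two bounded plurisubharmonic functions, so that $f_j\equiv f$ there for $j$ large — one upgrades this to a genuine vague convergence $\int\varphi\,(dd^cf_j)^n\to\int\varphi\,\NP(dd^cf)^n$ for bounded non-negative $\calf$-continuous $\varphi$ with compact support contained in $\{f>-\infty\}$. Plugging in such a $\varphi$ with $0\le\varphi\le1$ and $\varphi\equiv1$ on an $\calf$-neighbourhood of $K$ gives $\limsup_j\int_K(dd^cf_j)^n\le\int\varphi\,\NP(dd^cf)^n$, and letting $\int\varphi\,\NP(dd^cf)^n$ decrease towards $\NP(dd^cf)^n(K)$ finishes the proof.

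The step I expect to be the main obstacle is this last one: producing, for each $\epsilon>0$, an admissible $\calf$-continuous test function $\varphi\ge\mathbf 1_K$ with $\int\varphi\,\NP(dd^cf)^n\le\NP(dd^cf)^n(K)+\epsilon$. The trouble is that $K$ is only Euclidean-compact inside the merely $\calf$-open set $\{f>-\infty\}$, whose complement can be Euclidean-dense, so $K$ has no Euclidean neighbourhood inside $\{f>-\infty\}$ and $\NP(dd^cf)^n$ need not even be finite on Euclidean neighbourhoods of $K$ — outer regularity of a Radon measure is not directly available. The remedy is the quasicontinuity of $f$: fixing $\epsilon$, choose an open set $O$ of small capacity with $\{f=-\infty\}\subset O$ and $f|_{K\setminus O}$ continuous; then $f\ge-M$ on the compact set $K\setminus O$ for some $M$, so $K\cap\{f\le-m\}\subset O$ for $m>M$, and one must bound the $(dd^cf_j)^n$-mass of $K\cap O$ uniformly in $j$. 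This uniformity is the genuinely delicate point (it is \emph{not} the naive estimate $\int_O(dd^cf_j)^n\le j^n\,\capa(O)$, which is useless, and it is precisely here that $K\cap\{f=-\infty\}=\emptyset$ is used decisively); once it is in place, the bump-function technique of Remark \ref{remark3.2} produces the required $\varphi$, completing the argument along the lines of \cite{BT}.
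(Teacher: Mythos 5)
First, note that the paper itself does not prove this proposition: it is recalled as Bedford and Taylor's result \cite[Proposition 4.4]{BT}, so your attempt is in effect a reproof of that citation. Your first half is correct and complete: by plurifine locality of $(dd^c\cdot)^n$ for locally bounded plurisubharmonic functions, $(dd^cf_j)^n=(dd^cf_m)^n$ on the $\calf$-open set $\{f>-m\}$ for $j\ge m$, so $a_m:=\int_{K\cap\{f>-m\}}(dd^cf_m)^n=\NP(dd^cf)^n(K\cap\{f>-m\})$, and continuity from below of the Borel measure $\NP(dd^cf)^n$, together with $\int_K(dd^cf_j)^n\ge a_m$ for $j\ge m$, gives $\liminf_{j}\int_K(dd^cf_j)^n\ge\NP(dd^cf)^n(K)$; the observation that $(dd^cf_j)^n$ vanishes on the open set $\{f<-j\}$, so that $\int_K(dd^cf_j)^n=a_j+b_j$ with $b_j=\int_{K\cap\{f=-j\}}(dd^cf_j)^n$, is also fine.

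The limsup half, however, is where the whole content of the proposition lies, and you have not proved it. You reduce it (correctly) to showing $b_j\to0$, and then to a uniform-in-$j$ bound on $\int_{K\cap O}(dd^cf_j)^n$ for open sets $O\supseteq K\cap\{f\le-M\}$ of small capacity, and at that point you write ``once it is in place''. That uniform bound is precisely the nontrivial estimate behind \cite[Proposition 4.4]{BT}; without it nothing beyond the easy inequality is established, since, as you note yourself, the naive estimate $j^n\capa(O)$ is useless. The intermediate step is gapped as well: upgrading the $\calf$-local vague convergence of Theorem \ref{thm3.4} (which, incidentally, is stated for $\calf$-domains and would have to be applied componentwise on $\{f>-\infty\}$) to convergence against one test function equal to $1$ near all of $K$ requires interchanging $\lim_j$ with the infinite partition-of-unity sum of Remark \ref{remark3.2}, i.e. a tail estimate $\sup_j\sum_{k\ge N}\int\psi_k\varphi\,(dd^cf_j)^n\to0$ as $N\to\infty$, which is the same missing uniform mass control in disguise; and the existence of an $\calf$-continuous $\varphi\ge\mathbf{1}_K$ with compact support inside the merely $\calf$-open set $\{f>-\infty\}$ (whose complement may be Euclidean dense) and with $\int\varphi\,d\NP(dd^cf)^n\le\NP(dd^cf)^n(K)+\epsilon$ is asserted, not constructed. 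So the proposal proves only the $\liminf$ inequality; for the other direction you must either carry out the Bedford--Taylor quasicontinuity/capacity argument in full or simply cite \cite[Proposition 4.4]{BT}, as the paper does.
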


The following result seems to be new and gives a positive answer
to a question of \cite[page 13, Problem 6]{BL3} for finite
plurisubharmonic functions:

\begin{theorem}\label{thm3.26}
Let $f\in \PSH(\Omega)$ be finite. If $f$ is locally maximal, then
$f$ is maximal.
\end{theorem}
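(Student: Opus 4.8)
The plan is to split the argument into two parts: first, show that local maximality forces $(dd^cf)^n=0$ on $\Omega$; second, recover global maximality from this, reducing the (possibly non–locally bounded) finite function $f$ to the locally bounded situation by truncation and applying the $L^n$-estimate of B{\l}ocki, Theorem~\ref{thm3.16}.

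For the first part I would argue as follows. Local maximality means that each point of $\Omega$ lies in a Euclidean ball $B\subset\Omega$ on which $f|_B$ is a maximal plurisubharmonic function; by Proposition~\ref{prop2.3}, $f|_B$ is then $\calf$-maximal on $B$, and since it is finite and $B$ is an $\calf$-domain, Theorem~\ref{thm3.12} gives $(dd^c(f|_B))^n=0$. Since the Monge-Amp\`ere operator for finite $\calf$-plurisubharmonic functions of \cite[def 4.5]{EW} is defined $\calf$-locally and does not charge pluripolar sets, and since such balls cover $\Omega$, this yields $(dd^cf)^n=0$ on $\Omega$. As $f$ is finite we have $\{f>-\infty\}=\Omega$, so equivalently $\NP(dd^cf)^n=0$ on $\Omega$.

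For the second part, I may assume $\Omega$ connected. Fix a bounded open set $G$ with $\overline G\subset\Omega$ and a function $v\in\PSH(G)$ that is upper semicontinuous on $\overline G$ with $v\le f$ on $\partial G$; the goal is $v\le f$ on $G$. Gluing, I set $h=\max(v,f)$ on $G$ and $h=f$ on $\Omega\setminus G$; by Proposition~\ref{prop2.2} and \cite[Proposition 2.14]{EFW}, $h$ is a finite plurisubharmonic function on $\Omega$ with $h\ge f$ everywhere and $h=f$ on $\Omega\setminus G$, so it suffices to show $h=f$ on $G$. Now I choose a connected bounded open $D$ with $\overline G\subset D\subset\overline D\subset\Omega$ and truncate: $f_m=\max(f,-m)$ and $h_m=\max(h,-m)$ are locally bounded plurisubharmonic on $\Omega$ with $f_m\le h_m$, and since $h=f$ off $G$, the function $h_m-f_m$ vanishes on $D\setminus\overline G$, hence in a neighbourhood of $\partial D$. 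Thus $f_m$ and $h_m$ satisfy the hypotheses of Theorem~\ref{thm3.16} on $D$, giving $||h_m-f_m||_{L^n(D)}\le\frac{R^2}{4}\bigl(\int_D(dd^cf_m)^n\bigr)^{1/n}\le\frac{R^2}{4}\bigl(\int_{\overline D}(dd^cf_m)^n\bigr)^{1/n}$ with $R$ as in Theorem~\ref{thm3.16} for the domain $D$. By Proposition~\ref{prop3.25} (valid since $f$ is finite) the right-hand side converges as $m\to+\infty$ to $\frac{R^2}{4}\bigl(\NP(dd^cf)^n(\overline D)\bigr)^{1/n}=0$ by the first part, so $||h_m-f_m||_{L^n(D)}\to 0$. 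Since $0\le h_m-f_m\to h-f$ pointwise on $D$, Fatou's lemma then forces $h=f$ almost everywhere on $D$, hence almost everywhere on $G$, hence everywhere on $G$ because $h$ and $f$ are plurisubharmonic. This gives $v\le f$ on $G$, so $f$ is maximal.

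The one real obstacle is this last part for a function that is finite but not locally bounded, since Theorem~\ref{thm3.16} is a statement about locally bounded plurisubharmonic functions. The device of enlarging $G$ to $D$ is exactly what keeps the truncations inside the hypotheses of Theorem~\ref{thm3.16}: on $D\setminus\overline G$ the truncated functions already coincide, so the boundary condition $h_m-f_m\to 0$ along $\partial D$ is automatic, and the non-pluripolar Monge-Amp\`ere mass supplied by Proposition~\ref{prop3.25} controls the right-hand side of the estimate in the limit. A secondary point to verify is that the Monge-Amp\`ere operator of \cite[def 4.5]{EW} is genuinely $\calf$-local, so that its vanishing on a covering family of balls propagates to vanishing on all of $\Omega$.
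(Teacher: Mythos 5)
Your argument is correct, and its first half is the same as the paper's: local maximality plus Proposition \ref{prop2.3} gives $\calf$-maximality near each point, Theorem \ref{thm3.12} (equivalently Theorem \ref{cor3.21}) gives $(dd^cf)^n=0$, and since $f$ is finite this is the same as $\NP(dd^cf)^n=0$. Where you genuinely diverge is in recovering maximality from this: the paper shows, via Proposition \ref{prop3.25}, that the measures $(dd^cf_j)^n$ of the truncations $f_j=\max(f,-j)$ converge weakly to $0$ and then invokes B{\l}ocki's Theorem 4.4 of \cite{BL1} as a black box, whereas you reprove that implication directly -- gluing the competitor $v$ into $h=\max(v,f)$, truncating both functions, applying the $L^n$-estimate of Theorem \ref{thm3.16} on a slightly larger domain $D$ (where the boundary condition is automatic because $h=f$ off $\overline G$), controlling the right-hand side by $\NP(dd^cf)^n(\overline D)=0$ through Proposition \ref{prop3.25}, and finishing with Fatou and the fact that plurisubharmonic functions equal a.e.\ are equal. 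Your route is longer but self-contained relative to the results already quoted in the paper, and it makes explicit the mechanism that underlies B{\l}ocki's criterion; the paper's route is shorter at the cost of an external citation. One small stylistic point: for the gluing of $h$, since $f$ is finite but possibly not locally bounded, it is cleaner to invoke the classical gluing lemma for plurisubharmonic functions (e.g.\ \cite{K}) directly -- the hypothesis $\limsup_{z\to\zeta}v(z)\le v(\zeta)\le f(\zeta)$ on $\partial G$ is exactly what you have -- rather than to pass through Proposition \ref{prop2.2} and \cite[Proposition 2.14]{EFW}, whose proofs are phrased with $\calf$-local boundedness in mind; this does not affect the validity of your argument.
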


\begin{proof}
Since $f$ is a locally maximal psh function in $\Omega$, it is
also ${\cal F}$-locally $\calf$-maximal. Hence by Theorem
\ref{cor3.21}, we find that $(dd^c f)^n=0$. We also have
$\NP(dd^cf)^n=(dd^cf)^n$ by \cite[Remark 4.7]{EW}, so $\NP(dd^c
f)^n=0$. Let $\varphi$ be a non-negative continuous function on
$\Omega$ with compact support $K$, then we have
$$0\le \int\varphi(dd^c f_j)^n\le \sup_{x\in K}\varphi(x)\int_K(dd^c f_j)^n.$$
Hence, letting $j\to +\infty$, Proposition \ref{prop3.25} implies that
$$\lim_{j\to +\infty}\int\varphi (dd^c f_j)^n=0.$$
We then deduce from this equality that
$(dd^c f_j)^n$ converges weakly to  $0$. From \cite[Theorem 4.4]{BL1},
it follows that $f$ is maximal.
\end{proof}

\section*{Acknowledgements}

We wish to thank Jan Wiegerinck and the reviewers for all their
insightful comments, suggestions and questions.

\thebibliography{99}

\bibitem{ACCP} {\AA}hag, P., Cegrell, U., Czy\.{z}, R., Pham H.H.: \textit{Monge-Amp\'{e}re measures on pluripolar sets}, J. Math. Pures Appl. (9) \textbf{92} (2009), no. 6, 613--627.

\bibitem{AG} Armitage, D.H., Gardiner, S.J.: \textit{Classical potential theory}, Springer Monographs in Mathematics, Springer-Verlag London, Ltd., London, 2001.

\bibitem{BT} Bedford, E., Taylor, B.A.: \textit{Fine topology,  \v Silov boundary and $(dd^{c})^{n}$},
 J. Funct. Anal. {\bf 72} (1987), 225--251.

\bibitem{BL1} B\l ocki, Z.: \textit{Estimates for the Complex Monge-Amp\`ere Operator},
Bull. Polish. Acad. \textbf{41} (1993), no. 2, 151-157.

\bibitem{BL2} B\l ocki, Z.: \textit{On the definition of the Monge-Amp\`ere operator in $\CC^2$}, Math. Ann. \textbf{328} (2004), no. 3, 415--423.

\bibitem{BL3} B\l ocki, Z.: \textit{Minicourse on Pluripotential
 Theory}, University of Vienna, September 3-7, 2012.

\bibitem{D} Doob, J.L.: \textit{Classical Potential Theory and Its
Probabilistic Counterpart}, Grundlehren Math. Wiss. \textbf{262},
Springer-Verlag, New York,   1984.

\bibitem{EK} El Kadiri, M.: \textit{Fonctions finement plurisousharmoniques et topologie plurifine},  Rend. Accad. Naz. Sci. XL Mem. Mat. Appl. (5) {\bf 27} (2003), 77--88.

\bibitem{EF} El Kadiri, M., Fuglede, B.: \textit{Martin boundary of a finely open set and a Fatou-Naim-Doob theorem for finely superharmonic functions}, preprint: arXiv:1403.0857v2.

\bibitem{EFW} El Kadiri, M., Fuglede, B., Wiegerinck, J.: \textit{Plurisubharmonic
and holomorphic functions relative to the plurifine
topology}, J. Math. Anal. Appl.  {\bf 381} (2011), no. 2, 107--126.

\bibitem{EW} El Kadiri, M., Wiegerinck, J.: \textit {Plurifinely Plurisubharmonic Functions and the Monge Amp\`{e}re Operator}, to appear in Potential Anal. (2014).

\bibitem{E-W1} El Marzguioui, S., Wiegerinck, J.: \textit{The pluri-fine topology is locally connected}, Potential Anal. {\bf 25} (2006), no. 3, 283--288.

\bibitem{EW1} El Marzguioui, S., Wiegerinck, J.: \textit{Connectedness in the plurifine
topology}, Functional Analysis and Complex Analysis, Istanbul 2007, 105-115, Contemp.
Math. {\bf 481}, Amer. Math. Soc. Providence, RI, 2009.

\bibitem{EW2} El Marzguioui, S., Wiegerinck, J.: \textit{Continuity
properties of finely plurisubharmonic functions},
Indiana Univ. Math. J. {\bf 59} (2010), no. 5, 1793--1800.

\bibitem{F1} Fuglede, B.: \textit{Finely harmonic functions}, Springer Lecture Notes in Mathematics {\bf 289}, Berlin-Heidelberg-New York, 1972.

\bibitem{F4} Fuglede, B.: \textit{Sur la fonction de Green pour un domaine fin}, Ann. Inst. Fourier (Grenoble) \textbf{25} (1975), no. 3–-4, xxi, 201--206.

\bibitem{F5} Fuglede, B.: \textit{Integral representation of fine potentials}, Math. Ann. \textbf{262} (1983), no. 2, 191--214.

\bibitem{F3} Fuglede, B.: \textit{Repr\'{e}sentation int\'{e}grale des potentiels fins},  C. R. Acad. Sci. Paris S\'{e}r. I Math. \textbf{300} (1985), no. 5, 129--132.

\bibitem{F6} Fuglede, B.: \textit{Fonctions finement holomorphes de plusieurs
variables -- un essai}, S\'eminaire d'Analyse
P. Lelong--P. Dolbeault--H. Skoda, 1983/1984, 133--145, Lecture
Notes in Math. \textbf{1198}, Springer, Berlin, 1986.

\bibitem{K} Klimek, M.: \textit{Pluripotential Theory}, London Mathematical Society Monographs \textbf{6}, Clarendon Press, Oxford, 1991.

\bibitem{W} Wiegerinck, J.: \textit{Plurifine potential theory}, Ann. Polon. Math. \textbf{106} (2012), 275--292.

\end{document}